\documentclass[11pt]{amsart}
\usepackage{amssymb, amsmath, amsthm, color, marginnote, hyperref, cancel ,amscd, amsfonts,mathtools}
\usepackage[nobysame]{amsrefs}

\usepackage{enumitem}
\usepackage[all]{xy}
\usepackage{hyperref}
\usepackage[T1]{fontenc} 
\usepackage{mathrsfs}
\usepackage{wasysym}

\usepackage{multicol}
\usepackage{rotating}
\usepackage{graphicx,fancyhdr}
\usepackage{multirow}
\usepackage[normalem]{ulem}

\hypersetup{colorlinks=true,citecolor=cyan,linkcolor=blue,linktocpage=true}

\newtheorem{theorem}{Theorem}

\newtheorem{lemma}[theorem]{Lemma}
\newtheorem{cor}[theorem]{Corollary}

\newtheorem{prop}[theorem]{Proposition}

\theoremstyle{definition}
\newtheorem{question}[theorem]{Question}
\newtheorem{conjecture}[theorem]{Conjecture}

\newtheorem{remark}[theorem]{Remark}
\newtheorem*{remark*}{Remark}
\newtheorem{definition}[theorem]{Definition}
\newtheorem{example}[theorem]{Example}
\newtheorem*{example*}{Example}

\newtheorem{problem}[theorem]{Problem}

\newcommand{\pf}{\begin{proof}}
\newcommand{\epf}{\end{proof}}

\newcommand{\yd}[1]{{}^{ #1 }_{ #1 }\mathcal{YD}}

\newcommand{\ydg}[2]{{}^{ #1 }_{ #1 }\mathcal{YD}^{#2}}

\newcommand{\cero}{\mathbf{0}}

\newcommand{\leftdual}{\star,\monoid}
\newcommand{\rightdual}{\monoid,\star}

\newcommand{\ad}{\operatorname{ad}}

\newcommand{\ord}{\operatorname{ord}}

\newcommand{\car}{\operatorname{char}}

\newcommand{\hZ}{\widehat{\mathcal{Z}}}

\newcommand{\gr}{\operatorname{gr}}

\newcommand{\GK}{\operatorname{GKdim}}

\newcommand{\supp}{\operatorname{supp}}

\newcommand{\corad}{\operatorname{corad}}

\newcommand{\C}{{\mathbb C}}
\newcommand{\N}{{\mathbb N}}

\newcommand{\ku}{{\Bbbk}} 

\newcommand{\superqa}[3]{{\bf A}_{#1}(#2|#3)}
\newcommand{\superda}[1]{{\mathbf D}(2,1;#1)}

\newcommand{\toba}{{\mathscr B}} 
\newcommand{\bq}{\mathfrak{q}}

\newcommand{\wtoba}{\widetilde{\toba}}

\newcommand{\Gb}{\mathbb G}

\newcommand{\Bc}{\mathcal B}

\newcommand{\Ec} {\mathcal E}

\newcommand{\Kc}{\mathcal K}
\newcommand{\Lc}{\mathcal L}
 
\newcommand{\Pc}{\mathcal P}

\newcommand{\g}{\mathfrak g}

\newcommand{\pref}{\mathfrak{Pre}_{\textrm{fGK}}}
\newcommand{\pre}{\mathfrak{Pre}}
\newcommand{\post}{\mathfrak{Post}}
\newcommand{\postn}{\mathfrak{Post}_{\textrm{Noeth}}}

\newcommand{\monoid}{\mathtt M}%{\varPhi}

\newcommand{\I}{\mathbb I}

\begin{document}
\title[Noetherian pointed Hopf algebras]{On Noetherian pointed Hopf algebras}
\author[Andruskiewitsch]{Nicol\'as Andruskiewitsch}
\address[Andruskiewitsch]{CIEM-CONICET.  Profesor Em\'erito, Facultad de Matem\'a\-tica, F\'isica,
Astronom\'\i a y Computaci\'on, Universidad Nacional de C\'ordoba. 
Medina Allende s/n (5000) Ciudad Universitaria, C\'ordoba, Argentina
\vspace{10pt}\newline 
Department of Mathematics and Data
Science, Vrije Universiteit Brussel, Pleinlaan 2, 1050 Brussels, Belgium
\vspace{10pt}\newline 
Shenzhen International Center for Mathematics, Southern University of Science and Technology, Shenzhen 518055, China}

\email{nicolas.andruskiewitsch@unc.edu.ar}

\author[Heckenberger]{István Heckenberger}
\address[Heckenberger]{Philipps-Universit\"at Marburg,
FB Mathematik und Infor\-matik,
Hans-Meer\-wein-Stra\ss e,
35032 Marburg, Germany.}
\email{heckenberger@mathematik.uni-marburg.de}

\author[Vendramin]{Leandro Vendramin}
\address[Vendramin]{Department of Mathematics and Data Science, Vrije Universiteit Brussel, Pleinlaan 2, 1050 Brussel, Belgium}
\email{Leandro.Vendramin@vub.be}

\begin{abstract}
In \texttt{arXiv:1405.4105} it was asked whether an affine Hopf algebra with finite Gelfand-Kirillov dimension is necessarily Noetherian.
It is well-known that the converse is not true--take the group algebra of a polycyclic group
which is not nilpotent-by-finite.
However, it was conjectured in \texttt{arXiv:2301.04428} 
that a \emph{pointed} affine Noetherian Hopf algebra whose group of group-likes is nilpotent-by-finite 
necessarily has finite Gelfand-Kirillov dimension.
In the present paper we conjecture that a post-Nichols algebra over a polycyclic-by-finite group
is Noetherian if and only if it is affine and has finite Gelfand-Kirillov dimension.
Partial results supporting this conjecture are presented; and their consequences for the preceding questions and conjectures is analyzed.
\end{abstract}

\thanks{\emph{2020 MSC.} 16S30, 17B35. 
This work is partially supported by 
CONICET (PIP 11220200102916CO), FONCyT-ANPCyT (PICT-2019-03660), 
the Secyt (UNC) (Proyecto Consolidar 33620230100517CB),
and FWO--DFG (Weave) G043326N}

\maketitle

\section{Introduction} 

Throughout the text, $\ku$ will denote an algebraically closed field of characteristic $0$, unless explicitly stated otherwise. 
All algebras, coalgebras, etc., are over $\ku$.
Recall that a finitely generated algebra is called affine.
If $G$ is a group and $A$ is a (left) $G$-module algebra, then
$A \rtimes G$ denotes the semidirect, or smash, product of $A$ by $G$.

\medbreak
In this paper $J, H, K, M$ denote Hopf algebras with bijective antipode;
consequently, `$J$ Noetherian' means `$J$ left   Noetherian' or `$J$ 
right Noetherian', conditions that are equivalent.
Furthermore we assume that $H$ is pointed, $K$ is cosemisimple, 
and $M$ has the Chevalley property.

\medbreak
The authors of the preprint  \cite{arXiv:2507.23730} 
elaborate on the following conjecture:

\begin{conjecture}\label{conjecture:Noetherian-DME-GKdim}
\cite{brown-stafford}
If $H$ is  Noetherian pointed,  the following statements are equivalent:
\begin{enumerate}[leftmargin=7ex,label=\rm{(\roman*)}]
\item\label{item:GK-H-finita} $\GK H$ is finite ($\GK$ means Gelfand-Kirillov dimension).

\medbreak
\item\label{item:DME} $H$ satisfies the Dixmier-Moeglin Equivalence (DME).

\medbreak
\item\label{item:GK-GH-finita} The group $G(H)$ of group-like elements of $H$ is nilpotent-by-finite.
\end{enumerate}
\end{conjecture}

By the celebrated Gromov Theorem, 
it is clear that \ref{item:GK-H-finita} implies \ref{item:GK-GH-finita};
so,  a significant part of Conjecture \ref{conjecture:Noetherian-DME-GKdim} 
is the following question.

\begin{question}\label{question:Noetherian-finiteGK}
If $H$ is pointed Noetherian and $G(H)$ has finite growth, is 
$\GK H$ finite?
\end{question}

In the converse direction, it was asked:

\begin{question}\label{question:affine+finiteGK-Noetherian} \cite{Brown-Gilmartin-survey}
If $J$ is affine and $\GK J < \infty$, is $J$  Noetherian? 
\end{question}

One of the main purposes of \cite{arXiv:2507.23730} is to discuss recent advances 
on the following classical question (the converse 
is a theorem of Phillip Hall). 

\begin{conjecture}\label{conjecture:group algebra-Noetherian}
If the   group algebra $\ku\Gamma$ is Noetherian, then the  group  $\Gamma$ is polycyclic-by-finite.
\end{conjecture}

In this note we put together some known facts on, and discuss
potential approaches to, Questions \ref{question:Noetherian-finiteGK}
and \ref{question:affine+finiteGK-Noetherian} for 
pointed Hopf algebras beyond group algebras--assuming that Conjecture \ref{conjecture:group algebra-Noetherian} holds. 
Precisely, we relate these questions to analogous problems
on so-called post-Nichols algebras; namely, we propose  the following Conjecture.
\begin{conjecture} 
\label{conj:nichols-noeth}
Let $R$ be a post-Nichols algebra of $V \in \yd{\ku \Gamma}$ where
$\Gamma$ is a polycyclic-by-finite group.
The following statements are equivalent:
\begin{enumerate}[leftmargin=7ex,label=\rm{\textcolor{blue}{(\ref{conj:nichols-noeth}\alph*)}}]
\item\label{item:GK-R-finita} $R$ is affine and $\GK R$  is finite.

\medbreak 
\item\label{item:R-Noetherian} $R$ is left Noetherian.
\end{enumerate}
\end{conjecture}

\medbreak
The conjecture is related to Questions~\ref{question:Noetherian-finiteGK} and
\ref{question:affine+finiteGK-Noetherian},  
see Lemmas~\ref{lem:Noetherian-DME-GKdim} and \ref{lem:Noetherian-finiteGK}.

\medbreak
In Section \ref{sec:preliminaries} we review a number of general results on
Noetherianity and finite Gelfand-Kirillov dimension; in Section \ref{sec:corad-filtr} 
we collect results on the structure of pointed Hopf algebras related to the topics of the paper. 
The relations between  Questions \ref{question:Noetherian-finiteGK}
and  \ref{question:affine+finiteGK-Noetherian} and Conjecture~\ref{conj:nichols-noeth}
are discussed in Section \ref{sec:corad-graded}.

\medbreak
The focus of Section \ref{sec:diagonal-type} is post-Nichols algebras of diagonal type.
We answer Conjecture \ref{conj:nichols-noeth} affirmatively when $R$ is a Nichols algebra
of diagonal type, see Corollary \ref{cor:noeth-implies-finiteGK}.
One of the main results of this paper is Theorem~\ref{thm:Noetherian-finiteGK-diagtype}
that answers positively Question \ref{question:affine+finiteGK-Noetherian} 
when the infinitesimal braid is of diagonal type, assuming 
a few technical conditions. We believe that the ideas of its proof will be useful beyond
these conditions. 

\medbreak
We introduce the core, a new invariant of graded Hopf algebras in $\yd{J}$ which are semisimple
objects, in Section \ref{sec:core}. This is used to give a proof of Theorem~\ref{thm:Noetherian-finiteGK-diagtype}. 
We speculate on the additional steps that need to be taken to solve the Questions \ref{question:Noetherian-finiteGK} and \ref{question:affine+finiteGK-Noetherian}, as well as Conjecture \ref{conj:nichols-noeth} in  Section \ref{sec:remains}.
Some comments on the general, non-pointed, case are compiled in Section \ref{sec:chevalley}.

\subsection*{Conventions}  We set $\I_{\theta} \coloneqq \{1, \dots, \theta\}$.
The Nichols algebra of a braided vector space $V$ is denoted by $\toba(V)$.
The subalgebra of an associative algebra $A$ generated by $X \subset A$
is denoted by $\ku \langle X\rangle$.
For unexplained terminology and more details on the notions discussed here, 
see \cites{andrus-schneider,andrus-infinite,andrus-leyva,hs-book}.

\section{Preliminaries} \label{sec:preliminaries}

We shall need the following classical results. First, we shall use without
further warning that if $B$ is a subalgebra of a left Noetherian algebra $A$
and $A$ is a faithfully flat left $B$-module, then $B$ is left Noetherian as well.

\begin{prop}\label{prop:graded-Noetherian} \cite{mcconnell-robson}*{Theorem 1.6.9}
Let $A$ be a filtered ring. If $\gr A$ is left Noetherian, then so is 
$A$.
\end{prop}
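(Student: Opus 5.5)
The plan is the standard one for filtered rings (McConnell--Robson, Theorem 1.6.9): lift leading terms from the associated graded ring. We take the filtration $A = \bigcup_{n\ge 0} F_n A$ to be ascending, exhaustive, with $F_{-1}A = 0$ and $F_nA\,F_mA \subseteq F_{n+m}A$, and set $\gr A = \bigoplus_n F_nA/F_{n-1}A$. For $0 \neq a \in A$ there is a least $n$ with $a \in F_nA$. The \emph{symbol} (leading term) $\sigma(a)$ is the class of $a$ in $F_nA/F_{n-1}A$; put $\sigma(0)=0$.

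For a left ideal $L \subseteq A$, define $\gr L := \bigoplus_n (L\cap F_nA + F_{n-1}A)/F_{n-1}A$. This is a graded left ideal of $\gr A$, spanned by the symbols of elements of $L$. The key step is an injectivity statement: if $L \subseteq L'$ are left ideals with $\gr L = \gr L'$, then $L = L'$. To prove it, suppose $L \subsetneq L'$ and pick $a \in L' \setminus L$ of minimal filtration degree $n$. Since $\sigma(a) \in \gr L$, there is $b \in L\cap F_nA$ with $a - b \in F_{n-1}A$. Then $a-b \in L'\setminus L$ has smaller degree, contradicting minimality. The induction is legitimate because $F_{-1}A=0$.

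Now take an ascending chain $L_1 \subseteq L_2 \subseteq \cdots$ of left ideals of $A$. Then $\gr L_1 \subseteq \gr L_2 \subseteq \cdots$ is an ascending chain of left ideals of $\gr A$. Since $\gr A$ is left Noetherian, this chain stabilizes, and by the injectivity statement so does the original chain. Hence $A$ is left Noetherian.

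The only genuine obstacle is the hypothesis that the filtration is exhaustive and bounded below (for instance $F_{-1}A = 0$, or more generally discrete). Without it the minimal-degree argument fails, so I would make this assumption explicit, as in the standard setting of coradical or standard filtrations used later in the paper.
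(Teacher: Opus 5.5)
Your proof is correct and is exactly the standard argument behind the cited McConnell--Robson Theorem~1.6.9, which the paper invokes without reproducing: $L\mapsto \gr L$ preserves inclusions and, by your minimal-degree argument, distinguishes nested left ideals, so ACC lifts from $\gr A$ to $A$. Your caveat (ascending, exhaustive, $F_{-1}A=0$) is precisely that reference's convention and covers the coradical filtration; note, however, that the proof of Proposition~\ref{pro:semisimple} also applies the principle to the descending $\ker\varepsilon$-adic filtration, where your argument does not carry over verbatim and one must use the local finiteness of the $\monoid$-grading (each $A^\alpha$ meets $(\ker\varepsilon)^n$ trivially for $n\gg 0$) to run an analogous induction.
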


Generally speaking, the converse of the preceding result
is not true and this is a crucial obstacle.

\begin{prop}\label{prop:smash-Noetherian} \cite{passman-infinite}*{Proposition 1.6}
Let $G$ be a polycyclic-by-finite group and let $R$ be a $G$-module algebra.
If $R$ is left Noetherian, then so is $R\rtimes G$.
\end{prop}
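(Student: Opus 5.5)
The plan is to reduce to the case of a poly-infinite-cyclic group acting in one step, and then to a skew Laurent polynomial ring, where a Hilbert basis argument works.

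\emph{Reduction to subnormal series.} Since $G$ is polycyclic-by-finite, it has a subnormal series $1 = G_0 \trianglelefteq G_1 \trianglelefteq \dots \trianglelefteq G_n = G$ in which each factor $G_{i}/G_{i-1}$ is either infinite cyclic or finite. For each $i$ there is an identification
\[ R \rtimes G_i \cong (R\rtimes G_{i-1}) *_{\sigma} (G_i/G_{i-1}), \]
a crossed product of $R\rtimes G_{i-1}$ by the quotient group. Here $R\rtimes G_{i-1}$ is stable under conjugation by $G_i$ because $G_{i-1}$ is normal in $G_i$. So by induction on $i$ it suffices to show: if $S$ is left Noetherian and $T = S * Q$ is a crossed product with $Q$ either finite or infinite cyclic, then $T$ is left Noetherian.

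\emph{Finite factors.} If $Q$ is finite, then $T$ is a free left $S$-module of finite rank $|Q|$, with basis $\bar q$ for $q\in Q$. It is therefore a Noetherian left $S$-module. Every left ideal of $T$ is an $S$-submodule, so the ascending chain condition on left ideals of $T$ follows.

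\emph{Infinite cyclic factors.} If $Q=\langle t\rangle\cong\mathbb Z$, then $T$ is the skew Laurent polynomial ring $S[x,x^{-1};\alpha]$, where $x$ is a lift of $t$ and $\alpha$ is conjugation by $x$. This is an automorphism of $S$, and the crossed product has a trivial cocycle on $\mathbb Z$ after rescaling powers of $x$. The skew Hilbert basis theorem shows $S[x;\alpha]$ is left Noetherian: for a left ideal $I$, the leading coefficients of elements of degree $n$ form left ideals $L_n(I)$, using that $\alpha$ is bijective. These form an ascending chain that stabilizes, and the standard comparison argument then applies. Finally $S[x,x^{-1};\alpha]$ is the localization of $S[x;\alpha]$ at the Ore set $\{x^n\}$. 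Every left ideal $J$ of it satisfies $J = T(J\cap S[x;\alpha])$, so the ascending chain condition transfers.

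\emph{Main obstacle.} I expect the only delicate point to be bookkeeping rather than substance. One must check that the crossed product description of $R\rtimes G_i$ over $R\rtimes G_{i-1}$ is valid, so that the twisting by the quotient is by automorphisms. One must also check that the Hilbert basis argument uses bijectivity of $\alpha$, so that leading coefficients form left ideals. Both are standard, as in \cite{passman-infinite} and \cite{mcconnell-robson}.
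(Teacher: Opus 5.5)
Your proof is correct and matches the standard argument behind the result the paper cites without reproving it (Passman's Proposition 1.6): induct along a subnormal series with finite or infinite cyclic factors via $R\rtimes G_i \cong (R\rtimes G_{i-1})*(G_i/G_{i-1})$, treat finite factors by finite generation as a module, and treat infinite cyclic factors by the skew Laurent Hilbert basis theorem. Stating the inductive step for crossed products rather than for skew group rings, as you did, is the right choice, since the intermediate rings $R\rtimes G_{i-1}$ only give crossed products at the next step.
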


The proof of the following result is a variation of Hilbert's argument
for his Basissatz.

\begin{lemma}\label{lemma:graded-Noetherian} \cite{jia-zhang}
A graded connected left Noetherian
algebra  is affine
and its homogeneous components are finite dimensional.
\end{lemma}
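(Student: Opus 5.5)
Let $A = \bigoplus_{n\ge 0} A_n$ be graded connected, so $A_0 = \ku$, and left Noetherian, and let $A_+ = \bigoplus_{n>0} A_n$ be the augmentation ideal. I would argue as in Hilbert's Basissatz, treating $A_+$ as a left ideal and using only the Noetherian hypothesis and the grading.

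\emph{Step 1 (homogeneous generators).} Since $A$ is left Noetherian, the left ideal $A_+$ is finitely generated. As $A_+$ is a graded left ideal, I replace each generator by its homogeneous components, which still lie in $A_+$. This gives homogeneous elements $x_1,\dots,x_r$ of positive degrees $d_1,\dots,d_r$ with $A_+ = \sum_i A x_i$.

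\emph{Step 2 (affineness).} Let $B = \ku\langle x_1,\dots,x_r\rangle$. I will show $A_n \subseteq B$ by induction on $n$. The case $n=0$ is clear because $A_0=\ku$. For $n>0$, any $a\in A_n$ lies in $A_+$, so $a = \sum_i a_i x_i$. Comparing homogeneous components of degree $n$, I may take $a_i \in A_{n-d_i}$. Since $n-d_i<n$, induction gives $a_i\in B$, hence $a\in B$. Therefore $A=B$ is affine.

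\emph{Step 3 (finite-dimensional components).} From Step 2, $A_n$ is spanned by the monomials $x_{i_1}\cdots x_{i_k}$ with $d_{i_1}+\dots+d_{i_k}=n$. Each $d_i\ge 1$, so there are finitely many such monomials, and $\dim A_n<\infty$. Alternatively, the same induction gives $A_n=\sum_i A_{n-d_i}x_i$ directly, which is finite dimensional by induction.

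The only point needing care is the degree bookkeeping in Step 2: the coefficients $a_i$ can be chosen homogeneous of degree $n-d_i$, with $a_i=0$ if $d_i>n$. This holds because $A=\bigoplus_m A_m$ and each $x_i$ is homogeneous, so one simply projects the equation onto degree $n$. I expect no real obstacle beyond this.
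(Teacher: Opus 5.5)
Your proof is correct and is precisely the Hilbert-Basissatz-style argument the paper has in mind. The paper gives no proof of its own: it cites \cite{jia-zhang} and only remarks that the argument is a variation of Hilbert's, and your write-up supplies that argument, including the degree bookkeeping it needs.
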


Pointed Hopf algebras have a number of favorable properties. 
We list a few of them necessary for future discussions.

\begin{prop}\label{prop:antipode-pointed}
\cite{montgomery-book}*{Corollary 5.2.11}
The antipode of a pointed Hopf algebra is bijective.
\end{prop}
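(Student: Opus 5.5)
The plan is to use the coradical filtration and Takeuchi's formula for the inverse of an element in a coalgebra whose coradical is "invertible". Let $H$ be pointed with coradical $H_0 = \ku G(H)$. The antipode $S$ maps group-likes to their inverses, so $S$ restricts to a bijection of $H_0$. Since $S$ is an anti-coalgebra map, it preserves the coradical filtration, and we get $S(H_n)\subseteq H_n$ for all $n$. It therefore suffices to build an inverse $T$ of $S$ in the convolution-type sense $S\circ T = T\circ S = \mathrm{id}$.

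\emph{Step 1: pass to $H^{\mathrm{cop}}$.} Consider the Hopf algebra $H^{\mathrm{cop}}$: same algebra, opposite coproduct. It is a bialgebra, and an antipode of $H^{\mathrm{cop}}$ is exactly a linear map $\bar S$ with $\bar S(h_{(2)})h_{(1)} = \varepsilon(h)1 = h_{(2)}\bar S(h_{(1)})$. A standard computation shows that such an $\bar S$ is a composition inverse of $S$. So the statement reduces to showing that the bialgebra $H^{\mathrm{cop}}$ has an antipode, i.e. that $\mathrm{id}$ is convolution invertible in $\operatorname{End}(H^{\mathrm{cop}})$.

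\emph{Step 2: Takeuchi's lemma.} For a coalgebra $C$ and algebra $A$, a map $f\in\operatorname{Hom}(C,A)$ is convolution invertible if and only if its restriction to the coradical $C_0$ is. We apply this with $C = H^{\mathrm{cop}}$ and $A = H$, whose coradical is still $\ku G(H)$. On $\ku G(H)$ the identity has convolution inverse $g\mapsto g^{-1}$, and this works regardless of the coproduct orientation because group-likes are cocommutative. Hence $H^{\mathrm{cop}}$ is a Hopf algebra.

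\emph{Main obstacle.} The main point is Takeuchi's lemma, which I would prove directly. Extend $g\mapsto g^{-1}$ to a map $\phi\colon C\to A$ by choosing a projection $C\to C_0$. Then $h = \varepsilon - f*\phi$ vanishes on $C_0$. Since $\Delta(C_n)\subseteq \sum_{i} C_i\otimes C_{n-i}$, we get $h^{*(n+1)}(C_n)=0$, so $h$ is locally nilpotent. The geometric series $\sum_k h^{*k}$ is then well defined and gives $(f*\phi)^{-1}$, so $f$ has a right inverse; a left inverse is obtained symmetrically, and the two agree. This requires the coradical filtration to be exhaustive, which holds because every coalgebra is the union of its coradical filtration.
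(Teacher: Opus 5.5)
Your argument is correct and is essentially the standard proof behind the paper's citation (Montgomery's Corollary 5.2.11, which rests on Takeuchi's lemma, Lemma 5.2.10 there): convolution invertibility is detected on the coradical via local nilpotence along the coradical filtration, and applying this to the bialgebra $H^{\mathrm{cop}}$, whose coradical is still $\ku G(H)$, gives it an antipode, which is forced to be $S^{-1}$. Your preliminary remarks that $S$ preserves the coradical filtration and is bijective on $H_0$ are harmless but not needed.
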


\begin{theorem}\label{thm:pointed free} \cite{radford77} 
Pointed Hopf algebras are free over their Hopf subalgebras.
\end{theorem}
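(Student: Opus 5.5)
Radford's theorem: if $H$ is pointed and $K \subseteq H$ is a Hopf subalgebra, then $H$ is free as a left (and right) $K$-module. My plan is to reduce this to a statement about coradically filtered comodules and prove it by induction on the coradical filtration.

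\emph{Grouplike part.} Write $G = G(H)$, and recall from Proposition~\ref{prop:antipode-pointed} that the antipode of $H$ is bijective. The subalgebra $K$ is itself pointed with $G(K) = K \cap \ku G$. Choose a set $T$ of right coset representatives of $G(K)$ in $G$. Then
\[
\ku G = \bigoplus_{t \in T} \ku G(K)\, t .
\]
This decomposition is the prototype of the basis I want to build for $H$ over $K$.

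\emph{Comodule reformulation.} Consider $H$ as a right $H/HK^+$-comodule, or more concretely as a left $K$-module together with the Hopf module structure
\[
H \in {}_K\mathcal M^H, \qquad \text{with } \Delta \text{ as coaction.}
\]
This makes $H$ a $(K,H)$-Hopf module, that is, $K$ acts on the left and $H$ coacts compatibly. The plan is to prove a general statement by induction on the coradical filtration: every Hopf module $M \in {}_K\mathcal M^H$ is free over $K$.

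\emph{Induction on the coradical filtration.} Consider the coradical filtration $M_n = \{ m \in M : \delta(m) \in M \otimes H_n \}$.
\begin{itemize}
\item \emph{Base level.} $H_0 = \ku G$, so $M_0$ is spanned by elements whose coaction lies in $M \otimes \ku G$. I expect $M_0$ to decompose by grouplike degree, $M_0 = \bigoplus_g M_0^g$. Then $K_0 = \ku G(K)$ permutes these degrees, and one picks bases degree by degree along the $G(K)$-orbits.
\item \emph{Inductive step.} Show that $K M_n$ is free over $K$, and that a $K$-basis of $K M_n$ extends to a $K$-basis of $K M_{n+1}$.
\end{itemize}

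\emph{Key tool for the inductive step.} The crucial input is the case $K \subseteq H$ with coradically graded associated objects. Passing to $\gr$, the graded Hopf module $\gr M$ over $\gr K$ should be free via a Nichols/bosonization-type argument. This happens inside ${}^{\ku G}_{\ku G}\mathcal{YD}$ after splitting off grouplikes, where for graded connected objects freeness follows from a standard ``minimal generating set is a basis'' argument, as in Lemma~\ref{lemma:graded-Noetherian}-style Nakayama reasoning. One then lifts freeness from $\gr M$ to $M$ using the filtration. This lift works because the filtration is exhaustive and bounded below, so a filtered module whose associated graded is free over $\gr K$ is free over $K$.

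\emph{Main obstacle.} The hardest step is this inductive step, in particular the independence of the lifted basis. This is exactly where the bijective antipode and the pointedness are needed. Bijectivity of the antipode makes the Hopf module structure theorem available, which gives $M \cong K \otimes M^{\mathrm{co}}$-type decompositions level by level, and pointedness makes $H_0$ a group algebra. Wherever the graded-connected Nakayama argument fails because $K_0 \neq \ku$, I would first try to replace it with the smash product decomposition $\gr K \simeq R_K \# \ku G(K)$. I would then combine this with the coset decomposition of $\ku G$ from the first step.
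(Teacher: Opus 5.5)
The paper does not prove this statement; it only cites Radford. Your outline therefore has to stand on its own, and it has a genuine gap exactly where the theorem has content: $K$-linear independence.

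The parts you carry out are fine:
\begin{itemize}
\item the passage to Hopf modules $M\in{}_K\mathcal M^H$;
\item the base case, where $M_0=\bigoplus_g M_0^g$ is free over $\ku G(K)$ because $G(K)$ permutes the components freely;
\item the lifting of a homogeneous $\gr K$-basis of $\gr M$ to a $K$-basis of $M$, which works since $K_iM_j\subseteq M_{i+j}$, the filtration is exhaustive and $M_{-1}=0$.
\end{itemize}

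What is never justified is that $\gr M$ is free over $\gr K$, or that a basis of $KM_n$ extends to one of $KM_{n+1}$.
\begin{itemize}
\item Graded Nakayama only shows that lifts of a basis of $\gr M/(\gr K)^{+}\gr M$ \emph{generate}. Their independence is a flatness statement, essentially the theorem itself. For instance, the trivial module $\ku$ over a connected graded algebra has a one-element minimal generating set and is not free.
\item Moreover $\gr K\simeq R_K\#\ku G(K)$ is not connected, and you do not say how $\gr M$ would split.
\item The fundamental theorem of Hopf modules does not help either. It gives $M\simeq K\otimes M^{\mathrm{co}\,K}$ only for $M\in{}_K\mathcal M^K$. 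For $M\in{}_K\mathcal M^H$ with $K\subsetneq H$, a decomposition $M\simeq K\otimes(\cdots)$ is precisely what must be proved. Bijectivity of the antipode is not the relevant input.
\end{itemize}

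The missing idea is a socle argument that uses pointedness twice.
\begin{itemize}
\item Let $N\subsetneq M$ be a sub-Hopf module, e.g.\ $N=KM_n$. Since $H$ is pointed, the nonzero comodule $M/N$ contains some $\bar m\neq 0$ with $\delta(\bar m)=\bar m\otimes g$ for some $g\in G(H)$.
\item The map $\phi\colon K\to M/N$, $k\mapsto k\bar m$, satisfies $\delta(\phi(k))=\phi(k_{(1)})\otimes k_{(2)}g$. Hence $\Delta(\ker\phi)\subseteq \ker\phi\otimes K$, so $\ker\phi$ is a right coideal of $K$.
\item Since $K$ is pointed, a nonzero right coideal contains some $h\in G(K)$. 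But then $\bar m=S(h)h\bar m=0$, because $S(h)=h^{-1}\in K$. So $\ker\phi=0$ and $K\bar m\simeq K$.
\item For any lift $m$ of $\bar m$, the sum $N+Km=N\oplus Km$ is again a sub-Hopf module. Zorn's lemma on pairs (sub-Hopf module, $K$-basis of it) then shows that every $M\in{}_K\mathcal M^H$ is free over $K$.
\end{itemize}

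The same idea also completes your induction directly.
\begin{itemize}
\item The image $W$ of $M_{n+1}$ in $M/KM_n$ is a semisimple comodule, $W=\bigoplus_g W^g$, with $hW^g=W^{hg}$ for $h\in G(K)$.
\item Put $V=\bigoplus_{t\in T}W^t$, where $T$ is your set of coset representatives. The map $K\otimes V\to KM_{n+1}/KM_n$, $k\otimes v\mapsto kv$, is onto.
\item It is injective because it is a comodule map, with $k\otimes v\mapsto k_{(1)}\otimes v\otimes k_{(2)}t$ for $v\in W^t$, that is injective on the socle $\ku G(K)\otimes V$.
\end{itemize}

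Note that the step $h^{-1}\in K$ is exactly where one needs a Hopf subalgebra rather than merely a right coideal subalgebra.
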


Thus, if $H$ is pointed Noetherian, then any Hopf subalgebra, in particular
$\ku G(H) $, is also Noetherian.
We will also need the following braided version of Theorem~\ref{thm:pointed free}. Recall that a \emph{connected} Hopf algebra in $\yd{J}$ is a Hopf algebra with coradical $\ku$.

\begin{prop}\cite{AAGMV}*{Proposition 3.6}
\label{pro:braided_free}
Each connected Hopf algebra in $\yd{J}$ is a free left and right module over every right coideal subalgebra.
\end{prop}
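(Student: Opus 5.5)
The plan is to adapt the classical proof of Radford's freeness theorem, in its braided form due to Masuoka and Takeuchi. Let $R$ be a connected Hopf algebra in $\yd{J}$ and $C \subseteq R$ a right coideal subalgebra, so $\Delta(C)\subseteq C\otimes R$. We treat freeness as a right $C$-module; the left case then follows by applying the argument to the (bijective) antipode, or by the symmetric argument with left coideals. The key tool is a Hopf-module structure, in the spirit of the fundamental theorem for Hopf modules. The quotient $\overline{R} := R/RC^+$, where $C^+ = C\cap \ker\varepsilon$, is a left $R$-module coalgebra quotient in $\yd{J}$. Moreover, $R$ is an object of the category ${}^{\overline{R}}\mathcal{M}_C$ of relative Hopf modules (internal to $\yd{J}$), with coaction $(\pi\otimes \mathrm{id})\Delta$ and right multiplication by $C$.

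The heart of the argument is connectedness. Since $R$ has coradical $\ku$, both $R$ and its quotient coalgebra $\overline{R}$ are connected. We then construct an $\overline{R}$-colinear section and a retraction by filtering with the coradical filtration $R_0=\ku\subset R_1\subset\cdots$. By induction on $n$, we build $\overline{R}$-colinear, $\yd{J}$-linear maps $\gamma_n\colon \overline{R}_n\to R$ compatible with the inclusions, starting with $\gamma_0(\bar 1)=1$. Concretely, we aim to show that the multiplication map $\overline{R}\otimes C \to R$, $\bar x\otimes c\mapsto \gamma(\bar x)c$, is an isomorphism of right $C$-modules. Surjectivity will be proved filtration-degree by degree, using that $\Delta(x)-x\otimes 1-1\otimes x$ lies in lower terms. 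Injectivity will come from applying $(\pi\otimes\mathrm{id})\Delta$ and the counit.

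A cleaner route, which I would try first, is to invoke the equivalence ${}^{\overline{R}}\mathcal{M}_C \simeq \mathcal{M}_{\ku}$ given by coinvariants, together with $C$ being the coinvariants ${}^{\mathrm{co}\,\overline{R}}R$. This is Takeuchi's theorem, valid once $R$ is faithfully coflat over $\overline{R}$, and coflatness is automatic because $\overline{R}$ is connected and hence every comodule is an extension of trivial ones. Coinvariants then commute with the decomposition and yield $R\cong \overline{R}\otimes C$.

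The main obstacle is carrying out these classical arguments inside the braided category. The identity $C={}^{\mathrm{co}\,\overline{R}}R$ and the coflatness step involve the braiding $c$ of $\yd{J}$, whenever one moves elements of $C$ past comodule tensorands. One must check that all maps are morphisms in $\yd{J}$, which is where the bijectivity of the antipode of $J$ (and hence invertibility of the braiding) is needed. I would handle this by working throughout with the bosonization $R\#J$. There, $C\#J$ is an ordinary right coideal subalgebra, and the equivalence is checked in ordinary Hopf module categories. Finally, we descend by taking $J$-coinvariants, which preserves freeness because $R\#J\cong R\otimes J$ as $C$-modules.
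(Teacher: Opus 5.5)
The paper gives no proof of this statement; it is quoted from \cite{AAGMV}*{Proposition 3.6}. Judged on its own, your proposal has a genuine gap, and it sits exactly where the content of the theorem lies.

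\textbf{The missing facts.} Take $\overline{R}=R/RC^{+}$, with $R$ a left $\overline{R}$-comodule via $(\pi\otimes\mathrm{id})\Delta$. Both of your routes need two facts.
\begin{enumerate}
\item[(i)] $R$ is an injective (equivalently, coflat) $\overline{R}$-comodule. This is what lets you extend $\gamma_{n-1}$ colinearly from $\overline{R}_{n-1}$ to $\overline{R}_n$, and it is the hypothesis of any Takeuchi-type theorem.
\item[(ii)] $C={}^{\mathrm{co}\,\overline{R}}R$. Only the inclusion $\subseteq$ is formal.
\end{enumerate}

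Your justification of (i) does not work. Every $\overline{R}$-comodule is an iterated extension of trivial ones, including non-injective comodules such as $\ku$. Also, exactness of the left exact functor $-\square_{\overline{R}}R$ on trivial comodules does not propagate along extensions; the obstruction is $\mathrm{Cotor}^{1}_{\overline{R}}(-,R)$. In fact $\ku$ is the only simple $\overline{R}$-comodule and its injective hull is $\overline{R}$. So injectivity of $R$ amounts to $R\simeq\overline{R}\otimes{}^{\mathrm{co}\,\overline{R}}R$ as comodules, which is essentially the comodule half of the conclusion, not something automatic.

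Statement (ii) is simply asserted. Even granting (i), connectedness only gives injectivity of $\overline{R}\otimes C\to R$, $\bar x\otimes c\mapsto\gamma(\bar x)c$. The correct way to use connectedness here is that a comodule map over $\overline{R}$ is injective iff it is injective on coinvariants. The image is then a direct summand whose complement vanishes iff ${}^{\mathrm{co}\,\overline{R}}R=C$.

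Your degree-by-degree surjectivity sketch does not supply (ii). Iterating $R=\gamma(\overline{R})C+RC^{+}$ in Nakayama fashion needs a grading in which $C^{+}$ sits in positive degree. In the filtered setting $\bigcap_n R(C^{+})^{n}$ need not vanish; take $C=R=U(\mathfrak g)$ with $\mathfrak g$ perfect. So you would have to pass to $\gr R$, prove freeness there, and lift back, and none of this is in your plan.

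\textbf{Two further problems.} First, the equivalence you invoke is misstated. In ${}^{\overline{R}}\mathcal{M}_C$, with the compatibility $\rho(xc)=\rho(x)(1\otimes c)$ that $R$ actually satisfies, taking coinvariants is not an equivalence. For instance, $C$ with trivial coaction and $\overline{R}\otimes C$ have the same coinvariants. The relevant Takeuchi correspondences are:
\begin{itemize}
\item $\mathcal{M}^{R}_{C}\to\mathcal{M}^{\overline{R}}$, $M\mapsto M/MC^{+}$, with quasi-inverse $N\mapsto N\square_{\overline{R}}R$;
\item ${}_C\mathcal{M}$ versus relative Hopf modules over $R$.
\end{itemize}
Both require exactly the faithful (co)flatness that is missing.

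Second, bosonization does not remove the difficulty. $R\#J$ is not connected, and for non-pointed $J$ it is not even pointed. So neither connectedness arguments nor freeness results for pointed Hopf algebras apply to $C\#J\subseteq R\#J$. Moreover, the isomorphism $R\#J\simeq R\otimes J$ of $C$-modules would only show that a direct sum of copies of $R$ is free, which does not imply that $R$ is free. You would need a $J$-colinear isomorphism $R\#J\simeq\overline{R}\otimes(C\#J)$ and then take $J$-coinvariants.
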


\medbreak The following result gives a partial positive answer to a question from 
\cite{Wu-Zhang}.

\begin{theorem}\label{thm:pointed noeth-affine}
\cite{goodearl-zhang16} Pointed Noetherian Hopf algebras are affine.
\end{theorem}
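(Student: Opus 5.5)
The plan is to show first that the group-likes generate an affine Hopf subalgebra, and then to reduce affineness of $H$ to finite generation of a suitable coinvariant-type part, using the coradical filtration.

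\emph{Step 1: the group.} Let $H$ be pointed Noetherian and $G=G(H)$. By Theorem~\ref{thm:pointed free}, $H$ is free over $\ku G$, hence faithfully flat, so $\ku G$ is Noetherian. A group whose group algebra is Noetherian satisfies the ascending chain condition on subgroups: a chain $\Gamma_1\subset\Gamma_2\subset\cdots$ yields a chain of left ideals $\ku G\,\omega(\ku\Gamma_i)$, where $\omega$ denotes the augmentation ideal, and these are strictly increasing because the map $\Gamma\mapsto \ku G\,\omega(\ku\Gamma)$ is injective. In particular $G$ is finitely generated, so $\ku G$ is affine.

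\emph{Step 2: the coradical filtration and finitely many skew-primitives.} Consider the coradical filtration $H_0=\ku G\subset H_1\subset\cdots$. Every element of $H_1$ is a sum of elements of $\ku G$ and $(g,h)$-skew primitives. Set $P=\sum_{g,h}P_{g,h}$, the span of all skew-primitives. Then the left ideals $I_F=H\,F$, with $F$ running over finite subsets of $P$, form an ascending family, and by Noetherianity one finite set $F_0$ generates the left ideal $HP$. I would next try to show that the subalgebra $A$ generated by $G$ and $F_0$ already contains $H_1$. Applying the comultiplication to an expression $x=\sum a_i f_i$ with $x\in P$ and comparing terms in $H\otimes H_0$ should allow the coefficients $a_i$ to be taken in $\ku G$ modulo lower terms. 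Hence $P\subset \ku G\,F_0+\ku G$, and $A\supseteq H_1$.

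\emph{Step 3: $H_1$ generates $H$.} This is the main obstacle. It is false in general that a pointed Hopf algebra is generated by $H_1$; generation in degree one is precisely the question of whether the diagram is a Nichols algebra. So I would instead run the argument of Step 2 inductively on the coradical filtration. By Noetherianity the left ideals $H\,(H_n\cap \ker\varepsilon)$ stabilize at some $n_0$. Using $\Delta(H_m)\subset\sum_{i}H_i\otimes H_{m-i}$ and an induction on $m$, I would show that $H_m\subset \ku\langle H_{n_0}\rangle$ for all $m$. Each $H_{n}$ is a finitely generated $\ku G$-module: this again follows from Noetherianity via the chain argument applied to the $\ku G$-bimodule $H_n/H_{n-1}$, together with Lemma~\ref{lemma:graded-Noetherian}-type finiteness for the connected graded components. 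Consequently $\ku\langle H_{n_0}\rangle$ is affine, and therefore so is $H$.

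If the comultiplication bookkeeping in Step 3 fails, the fallback is to pass to a Hopf quotient or coideal subalgebra. One would use Proposition~\ref{pro:braided_free} to transfer Noetherianity to the right coideal subalgebra generated by the $H_n$ in the bosonization picture, and then apply Lemma~\ref{lemma:graded-Noetherian} to a graded connected object. The difficulty there is that $\gr H$ need not be Noetherian, as the paper emphasizes after Proposition~\ref{prop:graded-Noetherian}.
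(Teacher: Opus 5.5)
The paper gives no argument for this statement; it is quoted from \cite{goodearl-zhang16}. So the question is whether your proof stands on its own. Step~1 is correct. Steps~2 and~3, however, rest on two finiteness claims that are false in general, so no comultiplication bookkeeping can produce them.

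In Step~2 you conclude $P\subseteq \ku G\,F_0+\ku G$.
\begin{itemize}
\item For $H=\ku[T]$ in characteristic $p$ (Example~\ref{exa:R-not affine}) one has $HP=HT$. So $F_0=\{T\}$ is admissible, while $P$ contains every $T^{p^j}$.
\item In characteristic $0$, for a Noetherian $U(\g)$ your inclusion would give $\g=P\subseteq \ku F_0+\ku$. That is, it would prove Conjecture~\ref{conj:sierra-walton}, which is open.
\end{itemize}

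The same examples refute the Step~3 claim that each $H_n$ is a finitely generated $\ku G$-module, since for connected $H$ one has $H_1=\ku\oplus P$. Lemma~\ref{lemma:graded-Noetherian} cannot be invoked here: it needs $\gr H$ or the diagram to be Noetherian, which is exactly Question~\ref{question:graded-Noetherian}. So even granting $H=\ku\langle H_{n_0}\rangle$, you would not get affineness.

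Your induction does not reach that equality either. Knowing $x\in H\,(H_{n_0}\cap\ker\varepsilon)$ only writes $x$ with coefficients in $H$. Applying $\Delta$ cannot by itself place $x$ inside a subalgebra.

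The missing ingredient is Takeuchi's correspondence. Suppose $H$ is faithfully flat over a Hopf subalgebra $K$; here $H$ is free over $K$ by Theorem~\ref{thm:pointed free}. Then $K=H^{\operatorname{co} H/HK^{+}}$, where $K^+=K\cap\ker\varepsilon$, so the assignment $K\mapsto HK^+$ is injective.

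With this, your Step~2 idea works if you apply it to $\ker\varepsilon$ itself rather than to $HP$:
\begin{itemize}
\item By Noetherianity, pick a finite $F\subseteq \ker\varepsilon$ with $HF=\ker\varepsilon$.
\item Choose a finite-dimensional subcoalgebra $C\supseteq F$, and set $A=\ku\langle C\cup G\rangle$.
\item By your Step~1, $A$ is affine.
\item $A$ is a subbialgebra whose coradical is $\ku G$. Hence $\mathrm{id}_A$ is convolution invertible with inverse $S|_A$, so $A$ is a Hopf subalgebra.
\item Since $HA^+\supseteq HF=\ker\varepsilon$, we get $HA^+=H\cdot\ker\varepsilon$, the left ideal attached to the Hopf subalgebra $H$ itself. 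Injectivity then gives $A=H$.
\end{itemize}
With this route, the coradical filtration and any finiteness of the $H_n$ over $\ku G$ are unnecessary.
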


We shall also need:

\begin{prop}\label{prop:pointed-affine-gp-fg}
\cite{zhuang}*{Corollary 3.5} 
The group of group-like elements of a pointed affine Hopf algebra
is finitely generated.
\end{prop}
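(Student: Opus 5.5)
The plan is to build a finite generating set of $G \coloneqq G(H)$ out of finitely many algebra generators of $H$, using the coradical filtration and the fact that $H$ is pointed. Throughout, let $H_0 = \ku G \subseteq H_1 \subseteq \dots$ be the coradical filtration of $H$. Pick finitely many generators $x_1,\dots,x_n$ of $H$ as an algebra. Each $x_i$ lies in some finite-dimensional subcoalgebra $C_i$ of $H$, by the fundamental theorem of coalgebras.

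\textbf{Step 1: collect finitely many group-likes.} Since $H$ is pointed, the coradical of each $C_i$ is spanned by the finitely many group-likes of $H$ that lie in $C_i$. Let $F\subseteq G$ be the finite set of all these group-likes, over $i = 1,\dots,n$. Enlarge $F$ so that it contains $1$ and is closed under inverses, which is possible because $S(g)=g^{-1}$. Let $\Gamma \coloneqq \langle F\rangle$, a finitely generated subgroup of $G$. Since $S$ maps $C_i$ to another finite-dimensional subcoalgebra, we may also assume $S(C_i)$ is among the $C_j$. Then $C \coloneqq \sum_i C_i + \ku\Gamma$ generates $H$ as an algebra, and the subalgebra $\ku\langle C\rangle$ is a subbialgebra, closed under $S$.

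\textbf{Step 2: show every group-like lies in $\Gamma$.} I would give $H$ a coalgebra grading by cosets, or more precisely a decomposition with respect to $\Gamma$. For a pointed coalgebra, the Taft–Wilson theorem says that every $x \in H_1$ is a sum of elements of $\ku G$ and $(g,h)$-skew-primitives. More generally, there is a decomposition $H = \bigoplus_{g,h} {}^gH^h$ of $H$ as a $\ku G$-bicomodule via the coradical projections, for example via a coalgebra projection $H\to \ku G$ obtained from the injective hull, or from the wedge decomposition. The key claim is that each $C_i$ satisfies $C_i \subseteq \bigoplus_{g,h\in\Gamma} {}^gH^h$. Indeed, for pointed $C_i$, the group-likes $g,h$ that can appear in the bicomodule components of $C_i$ are exactly the group-likes of $C_i$, because $C_i$ is itself pointed with coradical $\ku(G\cap C_i)$ and carries its own decomposition, compatible with that of $H$. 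The bicomodule decomposition is multiplicative: ${}^gH^h\cdot {}^{g'}H^{h'} \subseteq {}^{gg'}H^{hh'}$. It follows that $H=\ku\langle C\rangle \subseteq \bigoplus_{g,h\in\Gamma}{}^gH^h$. A group-like $k\in G$ lies in ${}^kH^k$, so $k\in\Gamma$. Hence $G=\Gamma$ is finitely generated.

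\textbf{Main obstacle.} The delicate point is to justify rigorously a multiplicative bicomodule decomposition indexed by pairs of group-likes in which each $C_i$ only involves its own group-likes. My first attempt would avoid choosing projections altogether. I would argue with the subcoalgebra $D\coloneqq\ku\langle C\rangle$ directly: $D=H$ is spanned by products of elements of the $C_i$ and of $\Gamma$. Its coradical is contained in the coradical of the coalgebra generated by such products, and a product of pointed coalgebras $C_{i_1}\cdots C_{i_r}$ has coradical contained in $\ku(G_{i_1}\cdots G_{i_r})$, since the image of $C_{i_1}\otimes\dots\otimes C_{i_r}$ under multiplication is a quotient coalgebra of a pointed coalgebra with those group-likes. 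Taking the union over all such products, $\corad H\subseteq \ku\Gamma$, which gives $G\subseteq\Gamma$. This quotient-coalgebra argument, which uses that the coradical of a quotient is contained in the image of the coradical, is the step I would carefully verify.
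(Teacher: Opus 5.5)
The paper gives no proof of its own here; it only cites \cite{zhuang}*{Corollary 3.5}. Judged on its own, your proposal contains one route that fails and one that works. You should discard Step~2 and present the argument of your last paragraph as the proof.

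\textbf{Step~2 fails.} The problem is the multiplicativity ${}^gH^h\cdot{}^{g'}H^{h'}\subseteq{}^{gg'}H^{hh'}$. Take a decomposition induced by a coalgebra retraction $\pi\colon H\to\ku G$. Your claim that each $C_i$ only involves its own group-likes is fine: $\pi(C_i)$ is a subcoalgebra of $\ku G$, hence equals its coradical, and that coradical lies in $\pi(\corad C_i)=\corad C_i$. However, one checks that $\pi(x)=\delta_{g,h}\varepsilon(x)g$ for $x\in{}^gH^h$. It follows that the decomposition is multiplicative exactly when $\pi$ is an algebra map, i.e.\ a Hopf algebra retraction of $H$ onto $\ku G$. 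Such a retraction exists for $\gr H\simeq R\#\ku\Gamma$, but not for $H$ in general. For example, in $U_q(\mathfrak{sl}_2)$ with $q$ not a root of unity, the commutator $EF-FE$ is a nonzero element of the commutative algebra $\ku G=\ku\langle K^{\pm1}\rangle$. Hence no algebra map $H\to\ku G$ can be the identity on $\ku G$. The Taft--Wilson remarks and the closure of $C$ under $S$ in Step~1 are not needed either.

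\textbf{Your last paragraph is a complete proof.} It is essentially the standard argument behind the cited result, and it uses two standard facts.
First, for a surjective coalgebra map $f\colon D\to D'$ one has $\corad D'\subseteq f(\corad D)$.
Second, $\corad(D\otimes D')\subseteq\corad D\otimes\corad D'$. This holds because $\sum_{i+j=n}F_iD\otimes F_jD'$ is a coalgebra filtration (with $F_\bullet$ the coradical filtrations), and the zeroth term of any coalgebra filtration contains the coradical.

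Now argue as follows. Since $1\in C$, a group-like $k$ lies in some $C^r$, the image of $C^{\otimes r}$ under multiplication $m$, and $\ku k$ is a simple subcoalgebra of $C^r$. Therefore $\ku k\subseteq m\big((\corad C)^{\otimes r}\big)\subseteq\ku\Gamma$. Here $\corad C\subseteq\ku\Gamma$ because $C$ is a quotient of $\bigoplus_iC_i\oplus\ku\Gamma$. Linear independence of group-likes then gives $k\in\Gamma$.

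The same two facts can be packaged in one step: $A_n=\sum_{i_1+\dots+i_r\le n}F_{i_1}C\cdots F_{i_r}C$ is a coalgebra filtration of $H$, hence $H_0\subseteq A_0=\ku\Gamma$.
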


In fact, a more general result is now known.

\begin{theorem}\label{thm:pointed noeth-affine2}
\cite{jia-zhang26} A Noetherian Hopf algebra is affine if
and only if, its Hopf coradical is affine.
\end{theorem}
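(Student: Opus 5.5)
Write $A\subseteq J$ for the Hopf coradical of a Noetherian Hopf algebra $J$, i.e.\ the subalgebra generated by the coradical $J_0$. It is a Hopf subalgebra, because $J_0$ is a subcoalgebra and is stable under the bijective antipode. The plan is to filter $J$ by the ``Hopf coradical filtration'' $A_{(0)}=A$, $A_{(n)}=A\wedge A_{(n-1)}$. This is an exhaustive Hopf algebra filtration, since $A\supseteq J_0$ and the coradical filtration is exhaustive. In particular every $A_{(n)}$ is a subcoalgebra with $S(A_{(n)})=A_{(n)}$, so the subalgebra $B_n\coloneqq \ku\langle A_{(n)}\rangle$ is a Hopf subalgebra of $J$. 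Moreover $B_0=A$, $B_n\subseteq B_{n+1}$ and $\bigcup_n B_n=J$.

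\emph{($\Leftarrow$) $A$ affine implies $J$ affine.} First I would show that the chain $B_0\subseteq B_1\subseteq\cdots$ stabilizes. The left ideals $JB_n^+$ form an ascending chain in the Noetherian ring $J$, so $JB_n^+=JB_N^+$ for $n\ge N$. To pass from ideals back to subalgebras I would invoke faithful (co)flatness of $J$ over its Hopf subalgebras, which gives $B_n=\{x\in J:\ \Delta(x)-x\otimes 1\in JB_n^+\otimes J\}$ (Takeuchi's correspondence). Hence $B_n=B_N$ for $n\ge N$, and so $J=B_N=\ku\langle A_{(N)}\rangle$. It then remains to show that $A_{(N)}$ is generated, as an $A$-bimodule, by a finite-dimensional subspace. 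I would prove this by induction on $N$. Passing to $\gr J\simeq R\# A$, where $R$ is a connected graded Hopf algebra in $\yd{A}$, this amounts to showing that each $R(n)$ is a finitely generated object. For that I would use Noetherianity of $J$ on the ascending chain of left ideals $J\,\ku\langle A, W\rangle^+$ as $W$ runs over finite-dimensional subcoalgebras of $A_{(n)}$; this is the same stabilization trick as above. Since $A$ is affine, $J$ is then affine.

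\emph{($\Rightarrow$) $J$ affine implies $A$ affine.} Here I would use faithful flatness to get that $A$ is Noetherian. Then I would pick finitely many generators of $J$ lying in some $A_{(m)}$. Projecting to $\gr J\simeq R\# A$ and applying the counit-type projection $\gr J\to A$ (a Hopf algebra map that is the identity on $A$), the images of these generators generate $\gr J$ modulo the ideal $R^+\# A$. Hence their degree-zero components generate $A$. To make this rigorous one has to replace $J$ by $\gr J$ while keeping affineness. This works because a finite-dimensional filtered generating subspace of $J$ yields a finite-dimensional generating subspace of $\gr J$ in degrees $\le m$, which holds for any exhaustive filtration with $\gr_0=A$ and finitely many filtration degrees involved.

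\emph{Main obstacle.} I expect the main obstacle to be the faithful flatness of the Noetherian Hopf algebra $J$ over the Hopf subalgebras $B_n$ and $A$ in the non-pointed setting. For pointed $J$ this is Theorem~\ref{thm:pointed free}, and in the connected braided setting it is Proposition~\ref{pro:braided_free}. In general I would first try to establish freeness of $\gr J=R\# A$ over $\gr B_n$ via Proposition~\ref{pro:braided_free} applied to the right coideal subalgebras $\gr B_n\cap R$, and then lift flatness through the filtration as in Proposition~\ref{prop:graded-Noetherian}. The finite generation of each graded piece $R(n)$ in the ($\Leftarrow$) direction is the second delicate point.
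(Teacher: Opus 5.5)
The paper does not prove this statement (it is quoted from \cite{jia-zhang26}), so I judge your proposal on its own merits. Both directions contain a step that fails, although both can be repaired.

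\textbf{The direction ($\Rightarrow$).} You rely on the principle that a finite-dimensional generating subspace of $J$ inside some $A_{(m)}$ yields a finite generating set of $\gr J$. This is false, and the paper says so explicitly. In Example~\ref{exa:R-not affine} ($\car\ku=p$, $J=\ku[T]$, $A=\ku$, so the standard filtration is the coradical filtration), $J$ is generated by $T$ but $\gr J$ is not affine. Example~\ref{exa:R-not affine2} shows that affineness does not pass to $\gr$ in characteristic $0$ either. So nothing forces the degree-zero parts of your generators to generate $A$. A correct argument is coalgebraic and needs neither Noetherianity nor $\gr J$:
\begin{itemize}
\item pick a finite-dimensional subcoalgebra $V\ni 1$ generating $J$;
\item multiplication $V^{k-1}\otimes V\to V^k$ is a surjective coalgebra map;
\item for a surjective coalgebra map $f\colon C\to D$ one has $D_0\subseteq f(C_0)$, while $(C\otimes C')_0\subseteq C_0\otimes C'_0$.
\end{itemize}
Hence $(V^k)_0\subseteq (V_0)^k$, so $J_0=\bigcup_k (V^k)_0\subseteq\ku\langle V_0\rangle$, and $A=\ku\langle V_0\rangle$ is affine. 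This is the non-pointed analogue of Proposition~\ref{prop:pointed-affine-gp-fg}.

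\textbf{The direction ($\Leftarrow$).} Your stabilization trick is the right tool, but you aim it at a target that is false in general. $A_{(N)}$ need not be finitely generated as an $A$-bimodule, nor need the pieces $R(n)$ be finitely generated objects. In Example~\ref{exa:R-not affine}, $A_{(1)}=\ku\oplus\operatorname{span}\{T^{p^j}: j\ge 0\}$ is infinite-dimensional over $A=\ku$. In characteristic $0$, your claim applied to a Noetherian $U(\g)$ (where $A=\ku$ and $A_{(1)}=\ku\oplus\g$) would prove Conjecture~\ref{conj:sierra-walton}.

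What the chain of left ideals $J\,\ku\langle A,W\rangle^+$ actually gives is only that $\ku\langle A_{(n)}\rangle=\ku\langle A,W_0\rangle$ for some finite-dimensional $W_0$. That is all you need, and you can skip the filtration, the $B_n$, $\gr J$ and the induction altogether:
\begin{itemize}
\item For finite-dimensional subcoalgebras $W\subseteq J$, the subalgebras $D_W=\ku\langle A,W\rangle$ are subbialgebras containing $J_0$. Hence they are Hopf subalgebras, since convolution invertibility of the identity is detected on the coradical.
\item They form a directed family exhausting $J$.
\item $J$ is faithfully flat over each $D_W$ by Takeuchi's theorem quoted in Section~\ref{sec:chevalley}; its hypothesis $J_0\subseteq G(J)N$ holds trivially.
\end{itemize}
Thus $D_W=J^{\operatorname{co} J/JD_W^+}$, so $D_W\mapsto JD_W^+$ is injective and monotone. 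Since $J$ is Noetherian, some $D_W$ has $JD_W^+$ maximal, and that $D_W$ must equal $J$.

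This also removes your ``main obstacle''. You need neither freeness of $\gr J$ nor lifting of flatness through the filtration, and the latter is not automatic anyway.

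Finally, the coinvariant description should read $\Delta(x)-x\otimes 1\in J\otimes JB_n^+$. With $JB_n^+\otimes J$, as you wrote it, no non-zero element of $B_n^+$ satisfies the condition.
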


\section{The coradical filtration} \label{sec:corad-filtr}
Let $(H_{n})_{n \geq 0}$ be the coradical filtration of the 
pointed Hopf algebra
$H$ and let $\gr H$ be the associated graded Hopf algebra. 
Set $\Gamma = G(H)$. Then $\gr H$ splits as the bosonization
\begin{align}\label{eq:splitting-grH}
\gr H \simeq R \# \ku \Gamma,
\end{align}
where $R$ is a strictly graded (i.e., coradically graded and connected)
Hopf algebra in $\yd{\ku\Gamma}$. The important invariant $R$ is called the \emph{diagram} of $H$.
A strictly graded Hopf algebra $R = \bigoplus_{n \geq 0} R^{n}$
is called a \emph{post-Nichols algebra} of $V \coloneqq R^{1}$.
In addition, the subalgebra of $R$ generated by $V$ 
is isomorphic to the Nichols algebra $\toba(V)$ of $V$. 

\medbreak
It would be crucial to have an answer to the following question, a partial converse of Proposition~\ref{prop:graded-Noetherian}.

\begin{question}\label{question:graded-Noetherian} 
Let $H$ be a Noetherian Hopf algebra. 
Under what conditions is $\gr H$ also Noetherian?
\end{question}

Note that $\gr H$ in Question~\ref{question:graded-Noetherian} 
is not necessarily Noetherian, see Example~\ref{exa:R-not affine}.
To highlight the ambition of Question~\ref{question:graded-Noetherian},  note that, 
for enveloping algebras, it reduces to a classical conjecture,
see \cites{Sierra-Walton,andrus-mathieu,buzaglo,mathieu} for recent advances:

\begin{conjecture}\label{conj:sierra-walton} \cite{Sierra-Walton}
Let $\g$ be a Lie algebra.
If the enveloping algebra $U(\g)$ is Noetherian, then $\dim \g$ is finite.
\end{conjecture}

Example \ref{exa:R-not affine} also shows that $H$ affine does not imply $\gr H$ affine; 
it is well known that the converse is true.
In contrast with Question \ref{question:graded-Noetherian}, one has:

\begin{prop}\label{prop:kl-graded-filtered}
\cite{krause-lenagan}*{Lemma 6.5}
Let $A$ be a filtered algebra and $\gr A$ the associated graded algebra. Then $\GK A \geq \GK \gr A$
and the equality holds when $\gr A$ is finitely generated.
\end{prop}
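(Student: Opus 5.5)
The plan is the standard comparison of growth functions through generating subspaces. Let $(A_n)_{n\ge 0}$ be an exhaustive ascending filtration with $A_iA_j\subseteq A_{i+j}$, and set $\gr A=\bigoplus_n A_n/A_{n-1}$. I will show each inequality separately. For $\GK A\ge \GK \gr A$ it suffices to bound $\GK$ of every affine graded subalgebra of $\gr A$. Such a subalgebra is generated by a finite-dimensional subspace spanned by finitely many homogeneous elements $\bar x_1,\dots,\bar x_r$, with $\bar x_i$ the image of some $x_i\in A_{d_i}$. Put $W=\operatorname{span}\{1,x_1,\dots,x_r\}\subseteq A$ and $\overline W=\operatorname{span}\{1,\bar x_i\}$, so that $\dim\overline W\le\dim W$.

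The key step is a dimension comparison: $\dim \overline W^{\,m}\le \dim W^m$ for all $m$. To see it, filter $W^m$ by $W^m\cap A_n$. Then $\gr(W^m)=\bigoplus_n (W^m\cap A_n)/(W^m\cap A_{n-1})$ has the same dimension as $W^m$. Every monomial of length $\le m$ in the $\bar x_i$ is the symbol of the corresponding monomial in the $x_i$, or is zero. Hence $\overline W^{\,m}$ embeds into $\gr(W^m)$, viewed as a subspace of $\gr A$ degreewise. Taking $\limsup \log\dim/\log m$ gives $\GK \ku\langle \overline W\rangle\le \GK A$, and the first inequality follows.

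For equality when $\gr A$ is finitely generated, choose homogeneous generators $\bar x_1,\dots,\bar x_r$ of $\gr A$ with lifts $x_i\in A_{d_i}$, and let $W$ be as before. A standard induction on the filtration degree shows that the $x_i$ generate $A$. More precisely, set $d=\max d_i$. Any $a\in A_n$ has symbol $\bar a$, which is a combination of monomials in the $\bar x_i$ of total degree $n$. Such monomials have length at most $n$, assuming each $d_i\ge 1$ (degree-zero generators need separate handling: $A_0$ is a subalgebra and $\gr_0=A_0$, so I either include generators of $A_0$ or assume the filtration is such that $A_0$ is finite dimensional, as in the coradical case up to $\ku\Gamma$). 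Subtracting the lift and iterating gives $A_n\subseteq W^{cn}$ for a constant $c$.

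Conversely, $W^m\subseteq A_{dm}$, and the same symbol argument gives $\dim W^m \le \sum_{n\le dm}\dim (\gr A)_n\le \dim \overline W^{\,c'm}$ for a suitable constant $c'$, since $(\gr A)_{\le dm}$ is spanned by monomials of bounded length. Hence $\GK A\le\GK\gr A$. The main obstacle is exactly this degree-versus-length bookkeeping, in particular the treatment of degree-zero generators when $A_0$ is infinite dimensional. There I would instead argue that $\dim W^m=\dim\gr(W^m)$ and that $\gr(W^m)\subseteq \overline{W}^{\,m}$ once $W$ also contains a generating subspace of $A_0$, using that symbols of products of generators of degree $n$ lie in $\overline W^{\,m}$. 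This is the content of \cite{krause-lenagan}*{Lemma 6.5}, which I would follow.
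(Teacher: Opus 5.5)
Your proof of $\GK A\ge \GK\gr A$ is the standard symbol argument behind \cite{krause-lenagan}*{Lemma 6.5}, which the paper only cites, and it is correct. This inequality is the only part used later, in Lemma~\ref{lem:Noetherian-finiteGK} and Section~\ref{sec:chevalley}. The equality part, however, is proved only when every homogeneous generator of $\gr A$ has positive degree (so $A_0=\ku$), or, after adding a basis of $A_0$ to $W$, when $A_0$ is finite dimensional. Your two estimates, $A_n\subseteq W^{cn}$ and $\dim W^m\le \dim A_{dm}\le \dim \overline W^{\,c'm}$, both rely on the filtration degree bounding the length of monomials. If $\dim A_0=\infty$, degree-zero letters can be repeated without raising the degree, so no such bound exists and $\dim A_{dm}=\infty$. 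This is exactly the case relevant here: for the coradical filtration of a pointed Hopf algebra, $A_0=\ku\Gamma$ is infinite dimensional whenever $\Gamma$ is infinite. So your parenthetical ``as in the coradical case up to $\ku\Gamma$'' does not cover it.

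The patch you sketch for this case, $\gr(W^m)\subseteq\overline W^{\,m}$, is false. Let $A=\ku\langle t,y\rangle/(yt-ty-t^3)$, the Ore extension $\ku[t][y;t^3\tfrac{d}{dt}]$, filtered by $A_n=\bigoplus_{k\le n}\ku[t]\,y^k$. Then $A_0=\ku[t]$, and $\gr A=\ku[t,\bar y]$ is commutative, generated by $t$ (degree $0$) and $\bar y$ (degree $1$). For $W=\operatorname{span}\{1,t,y\}$ we have $t^3=yt-ty\in W^2\cap A_0$, so $t^3\in\gr(W^2)$. But the degree-zero part of $\overline W^{\,2}$ is only $\operatorname{span}\{1,t,t^2\}$.

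The failure is structural. When the top-degree terms of an element of $W^m$ cancel, its symbol is a lower-order term. Your remark that symbols of products of generators lie in $\overline W^{\,m}$ says nothing about such terms. Even the weaker inclusion $\gr(W^m)\subseteq \overline W^{\,cm}$, which would suffice, needs an argument bounding the $\overline W$-length of these lower-order terms linearly in $m$. Nothing in your proposal provides this once $A_0$ is infinite dimensional. So the equality $\GK A=\GK\gr A$ is not established in the stated generality, and that length control is the missing step.
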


For pointed Hopf algebras there is a more precise  result. 

\begin{theorem}\label{thm:zhuang}\cite{zhuang}*{Theorem 5.4}, \cite{zhuang-jpaa}*{5.5}. 
If $R$ is affine, then 
\begin{equation}\label{eq:GK-grH}
\GK R + \GK \ku \Gamma = \GK \gr H = \GK H.
\end{equation}
\end{theorem}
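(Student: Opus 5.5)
The plan is to prove the two equalities in \eqref{eq:GK-grH} separately, using the splitting \eqref{eq:splitting-grH} $\gr H \simeq R \# \ku\Gamma$ throughout. Since $R$ is affine and strictly graded, each homogeneous component $R^n$ is finite-dimensional. Choose $d$ such that $R^{\le d} \coloneqq \bigoplus_{n \le d} R^n$ generates $R$. Then $\GK R$ equals the growth exponent $\limsup_n \log \dim R^{\le n} / \log n$, which is the usual statement that GK dimension of a graded affine algebra can be computed from its grading.

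\emph{Step 1: $\GK \gr H = \GK R + \GK \ku\Gamma$.} If $\GK \ku\Gamma = \infty$, then $\ku\Gamma \subset \gr H$ forces both sides to be infinite. So assume $\Gamma$ has polynomial growth on each finitely generated subgroup. By Gromov and Bass--Guivarc'h, the growth $|S^m|$ is then bounded above and below by constant multiples of $m^{e}$, with $e$ an integer, for every finite symmetric $S \ni 1$ generating a subgroup. Because it is an honest limit, it multiplies correctly with $\dim R^{\le n}$.

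\emph{Lower bound.} Take a finite set $S$ and $W = R^{\le d}$. The multiplication map $R \otimes \ku\Gamma \to R\#\ku\Gamma$ is bijective. Hence $(W + \ku S)^{2m} \supseteq W^m\, \ku S^m$ has dimension at least $\dim W^m \cdot |S^m|$. Taking $\limsup$, and using that the $\Gamma$-factor is a genuine limit, gives $\ge$.

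\emph{Upper bound.} Any finite-dimensional subspace of $R\#\ku\Gamma$ lies in some $R^{\le d'}\,\ku S$. The action of $\Gamma$ on $R$ preserves the grading, so $g R^{n} = R^{n} g$. It follows that $(R^{\le d'}\ku S)^m \subseteq R^{\le d'm}\, \ku S^m$. This subspace has dimension at most $\dim R^{\le d'm}\cdot |S^m|$, which gives $\le$.

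\emph{Step 2: $\GK H = \GK \gr H$.} Proposition \ref{prop:kl-graded-filtered} already gives $\GK H \ge \GK \gr H$, so only $\le$ remains. When $\Gamma$ is finitely generated, $\gr H$ is generated by $R^{\le d}$ together with finitely many group-likes, hence is affine, and Proposition \ref{prop:kl-graded-filtered} gives equality directly.

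In general I would reduce to this case. A finite-dimensional subspace $W \subset H$ lies in a Hopf subalgebra $H'$ generated by a finite-dimensional subcoalgebra. Then $G(H')$ is finitely generated, and $\gr H' \hookrightarrow \gr H$ is $R' \# \ku G(H')$ with $R' \subseteq R$ a graded braided Hopf subalgebra. Since $\GK$ is a supremum over finite-dimensional subspaces, it suffices to show $\GK H' \le \GK R + \GK \ku G(H')$.

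This is the main obstacle, because $R'$ need not be affine. To get around it, enlarge $H'$: add to the generating coalgebra lifts of a basis of $R^{\le d}$. Each such lift lies in a finite-dimensional subcoalgebra of $H$, which brings in only finitely many further group-likes. After this enlargement the diagram of $H'$ contains $R^{\le d}$ and hence equals $R$, since $R$ is a braided Hopf algebra generated by $R^{\le d}$ and the diagram of $H'$ is a graded Hopf subalgebra of $R$ stable under $G(H')$. Then $\gr H' = R \# \ku G(H')$ is affine, and the affine case of Step 2 together with Step 1 gives
\[
\GK H' = \GK R + \GK \ku G(H') \le \GK R + \GK \ku\Gamma.
\]
Taking the supremum over $H'$ completes the proof. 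The point to check carefully is that $\gr H' \to \gr H$ is injective with image exactly $R\#\ku G(H')$. This holds because the coradical filtration of a subcoalgebra is the induced one.
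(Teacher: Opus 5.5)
Your proof is correct and self-contained. The paper gives no argument of its own for this statement; it only quotes Zhuang, so there is nothing internal to compare against line by line.

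Your Step 1 is the graded instance of Lemma~\ref{lemma:GKdim-smashproduct}, whose equality case you could simply cite.
\begin{itemize}
\item The action of $\Gamma$ on $R$ is locally finite, because it preserves each finite-dimensional $R^n$.
\item When $\GK\ku\Gamma<\infty$, the supremum over finitely generated subgroups is an attained integer. Gromov and Bass--Guivarc'h then give $\ku\Gamma$ a GK-deterministic subspace.
\item This is why the honest limit has to come from the group side: nothing forces $\log_n\dim R^{\le n}$ to converge.
\end{itemize}
Your Step 2 is what makes the statement sharper than Proposition~\ref{prop:kl-graded-filtered}. The induced filtration $H'_n=H'\cap H_n$ embeds $\gr H'$ in $\gr H$. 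Adjoining lifts of a basis of $R^{\le d}$ makes the diagram of $H'$ all of $R$. Hence $\gr H'$ is affine without assuming $\Gamma$ finitely generated.

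Three small points to tighten.

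\textbf{First}, the claim ``$G(H')$ is finitely generated'' needs a justification. Let $\mathcal S$ denote the antipode.
\begin{itemize}
\item $H'$ is generated as an algebra by the subcoalgebra $E=\sum_{n\ge 0}\mathcal S^n(D)$.
\item The coradical $E_0$ is spanned by $G(D)\cup G(D)^{-1}$.
\item The coradical of the algebra generated by a subcoalgebra $E$ lies in the subalgebra generated by $E_0$, since $(CD)_0\subseteq C_0D_0$ for subcoalgebras $C,D$.
\end{itemize}
Hence $G(H')=\langle G(D)\rangle$.

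\textbf{Second}, Step 2 does not need Step 1. Once $\gr H'$ is affine, $\GK H'=\GK\gr H'\le\GK\gr H$, because $\gr H'$ is a subalgebra of $\gr H$. So $\GK H=\GK\gr H$ holds with no growth assumption on $\Gamma$, and Gromov's theorem is used only for the first equality.

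\textbf{Third}, stability of $R'$ under $G(H')$ plays no role. $R'=R\cap\gr H'$ is a subalgebra containing the generating subspace $R^{\le d}$, and that alone gives $R'=R$.
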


\section{Coradically graded pointed Hopf algebras} \label{sec:corad-graded}
\subsection{On Conjecture ~\ref{conj:nichols-noeth}}

Recall that a coradically graded pointed Hopf algebra 
$L = \bigoplus_{n \geq 0} L^{n}$ 
splits as the bosonization
\begin{align}\label{eq:splitting-corad-graded}
L \simeq R \# \ku G(L),
\end{align}
where $R = \bigoplus_{n \geq 0} R^{n}$ is a post-Nichols algebra of $\Pc(R)=R^{1}$ in $\yd{\ku G(L)}$.

\begin{lemma}\label{lemma:pointed corad-graded}
Let $L$ be a coradically graded pointed Hopf algebra. 
If $L$ is Noetherian, then both the diagram $R$ of $L$ and $\ku G(L)$
are left Noetherian (hence, the Nichols algebra $\toba(R^1)$
is Noetherian as well). The converse holds provided that
Conjecture \ref{conjecture:group algebra-Noetherian} is true.
\end{lemma}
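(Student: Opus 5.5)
Here is the plan. Write $L \simeq R \# \ku\Gamma$ with $\Gamma = G(L)$, as in \eqref{eq:splitting-corad-graded}.

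\emph{Forward direction.} Assume $L$ is Noetherian.
\begin{enumerate}[leftmargin=7ex]
\item By Theorem~\ref{thm:pointed free}, $L$ is free, hence faithfully flat, over its Hopf subalgebra $\ku\Gamma$. The remark at the start of Section~\ref{sec:preliminaries} then shows that $\ku\Gamma$ is left Noetherian.
\item For $R$, I use that $R = L^{\operatorname{co}\ku\Gamma}$ is the subalgebra of coinvariants of the projection $\pi\colon L \to \ku\Gamma$. Multiplication gives a linear isomorphism $R \otimes \ku\Gamma \simeq L$. This makes $L$ a free left $R$-module, with basis $\Gamma$.
\item I intend to invoke the faithfully flat remark once more, now for $R \subset L$, to conclude that $R$ is left Noetherian.
\item Alternatively, argue directly. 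Let $I$ be a left ideal of $R$. Then $LI = \bigoplus_{g\in\Gamma} g\,I$, using $gR = Rg$, which holds because $R$ is stable under the adjoint action of $\Gamma$.
\item Hence $LI \cap R = I$: compare the $g=1$ components in the decomposition $L=\bigoplus_g Rg$, rewritten as $\bigoplus_g gR$.
\item Therefore $I \mapsto LI$ is an injective, order-preserving map from left ideals of $R$ to left ideals of $L$. The ACC for $L$ then gives the ACC for $R$.
\end{enumerate}
This direct argument is the step needing care: one has to check that the sides match, i.e.\ that $L$ is free as a right $R$-module as well as a left one. The decomposition $L = \bigoplus_g gR$ handles this.

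\emph{The Nichols algebra.} Apply the same principle to $\toba(R^1) \subset R$.
\begin{enumerate}[leftmargin=7ex]
\item $\toba(R^1)$ is the subalgebra generated by $V = R^1$, and it is a graded right coideal subalgebra of $R$.
\item $R$ is a connected Hopf algebra in $\yd{\ku\Gamma}$. By Proposition~\ref{pro:braided_free}, $R$ is free as a left and as a right $\toba(R^1)$-module.
\item The argument above, with $L$ replaced by $R$, shows that $\toba(R^1)$ is left Noetherian.
\end{enumerate}

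\emph{Converse.} Assume $R$ and $\ku\Gamma$ are left Noetherian.
\begin{enumerate}[leftmargin=7ex]
\item Conjecture~\ref{conjecture:group algebra-Noetherian} gives that $\Gamma$ is polycyclic-by-finite.
\item Since $R$ is a left Noetherian $\Gamma$-module algebra, Proposition~\ref{prop:smash-Noetherian} shows that $R \rtimes \Gamma$ is left Noetherian.
\item As an algebra the bosonization $R \# \ku\Gamma$ is exactly the smash product $R\rtimes\Gamma$. Hence $L$ is left Noetherian.
\item The antipode of $L$ is bijective by Proposition~\ref{prop:antipode-pointed}, so left Noetherian and right Noetherian coincide. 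This gives that $L$ is Noetherian.
\end{enumerate}
The only genuine input is the conjecture, which is needed to apply Passman's result.
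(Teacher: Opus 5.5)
Your proof is correct. For $\ku\Gamma$, for $R$ and for the converse it follows the paper: Radford's freeness, $L$ free over $R$, and Passman's result combined with Conjecture~\ref{conjecture:group algebra-Noetherian}. Your check that $L=\bigoplus_g Rg=\bigoplus_g gR$ is free on both sides is a useful precision about sidedness. The paper's one-line "free left $R$-module $\Rightarrow$ $R$ left Noetherian" glosses over this; for left ideals one needs flatness of $L$ as a \emph{right} $R$-module.

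The one genuine difference is the step for the Nichols algebra.

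\begin{itemize}
\item The paper notes that $\toba(R^1)\#\ku G(L)$ is a Hopf subalgebra of $L$. It is therefore Noetherian by Theorem~\ref{thm:pointed free}, and the paper then descends to $\toba(R^1)$ by the same free-module argument used for $R$.
\item You go directly from $R$ to $\toba(R^1)$, using the braided freeness of Proposition~\ref{pro:braided_free} over the right coideal subalgebra $\toba(R^1)$.
\end{itemize}

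Your route proves slightly more: left Noetherianity of any post-Nichols algebra $R$ forces that of $\toba(R^1)$, with no reference to $L$ or $\Gamma$. The paper's route needs only Radford's theorem and stays within ordinary pointed Hopf algebras.

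When you write ``the argument above, with $L$ replaced by $R$'', the input you actually need is the faithful-flatness principle, not the $g$-component computation specific to the smash product. Concretely, freeness of $R$ as a right $\toba(R^1)$-module gives $RI\cap\toba(R^1)=I$ for every left ideal $I$ of $\toba(R^1)$.
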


\pf Since $L$ is a free left  $R$-module, $R$ is left Noetherian; $\ku G(L)$, as well as $\toba(R^1) \# \ku G(L)$,
are Noetherian by Theorem \ref{thm:pointed free}. 
Hence $\toba(R^1)$ is left Noetherian.

Conversely, if $R$ is left Noetherian and  
$G(L)$ is  polycyclic-by-finite, then $L$ is Noetherian
by Proposition \ref{prop:smash-Noetherian}.
\epf

To deal with Questions \ref{question:Noetherian-finiteGK}
and  \ref{question:affine+finiteGK-Noetherian} it is
natural to state Conjecture~\ref{conj:nichols-noeth} from the Introduction; they are
related as follows.

\begin{lemma}
\label{lem:Noetherian-DME-GKdim}
Let $H$ be a pointed Noetherian Hopf algebra with $\Gamma=G(H)$ nilpotent-by-finite, 
and let $L=\gr H$. Assume 
that
\begin{enumerate}[leftmargin=*,label=\rm{(\roman*)}]
\item\label{item:L-noeth} $L$ is Noetherian, and 
\item\label{item:diag-finiteGK} the diagram $R$ of $H$ has finite $\GK$.
\end{enumerate}
Then $\GK H<\infty$. 
\end{lemma}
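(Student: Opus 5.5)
The plan is to reduce to Theorem~\ref{thm:zhuang}, which gives $\GK H = \GK R + \GK \ku\Gamma$ once we know that $R$ is affine. The argument therefore has three parts: show that $R$ is affine, bound $\GK \ku\Gamma$, and then add the two contributions.

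\emph{Affineness of $R$.} By hypothesis \ref{item:L-noeth}, $L=\gr H \simeq R\#\ku\Gamma$ is Noetherian, so Lemma~\ref{lemma:pointed corad-graded} shows that the diagram $R$ is left Noetherian. Now $R=\bigoplus_{n\ge 0}R^n$ is a graded algebra with $R^0=\ku$, because it is strictly graded and hence connected. Lemma~\ref{lemma:graded-Noetherian} then tells us that $R$ is affine and that each $R^n$ is finite dimensional. This is the one step requiring care: one must check that the grading on $R$ is an algebra grading with degree-zero part $\ku$. This holds because $R$ is a coradically graded connected Hopf algebra in $\yd{\ku\Gamma}$, and its coradical grading is compatible with the multiplication.

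\emph{Growth of $\ku\Gamma$.} Since $H$ is pointed and Noetherian, Theorem~\ref{thm:pointed noeth-affine} shows that $H$ is affine. Proposition~\ref{prop:pointed-affine-gp-fg} then shows that $\Gamma=G(H)$ is finitely generated. Alternatively, $\ku\Gamma$ is Noetherian by Theorem~\ref{thm:pointed free}, but finite generation is what we actually need. A finitely generated nilpotent-by-finite group has polynomial growth by the Bass--Guivarc'h formula, applied to a finite-index nilpotent subgroup. Hence $\GK\ku\Gamma<\infty$.

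\emph{Conclusion.} With $R$ affine, Theorem~\ref{thm:zhuang} gives
\[
\GK H=\GK R+\GK\ku\Gamma .
\]
The first summand is finite by hypothesis \ref{item:diag-finiteGK}, and the second is finite by the previous step, so $\GK H<\infty$. The argument uses nothing beyond the cited results.
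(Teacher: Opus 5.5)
Your proof is correct and follows essentially the same route as the paper. In both, Lemma~\ref{lemma:pointed corad-graded} and Lemma~\ref{lemma:graded-Noetherian} make $R$ affine. Theorem~\ref{thm:pointed noeth-affine} and Proposition~\ref{prop:pointed-affine-gp-fg} make $\Gamma$ finitely generated, so $\GK\ku\Gamma<\infty$ since $\Gamma$ is nilpotent-by-finite (you make the polynomial-growth step explicit, which the paper leaves implicit). Theorem~\ref{thm:zhuang} then gives $\GK H=\GK R+\GK\ku\Gamma<\infty$.
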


From the proof it will be clear that  if in Conjecture~\ref{conj:nichols-noeth} for $R$ property \ref{item:R-Noetherian} implies \ref{item:GK-R-finita} then \ref{item:diag-finiteGK} can be omitted.

\begin{proof}
As above, $L\simeq R\# \ku \Gamma$, where $R$ is the diagram of $H$. 
Moreover, $L$ is Noetherian by hypothesis \ref{item:L-noeth}.
By Lemma~\ref{lemma:pointed corad-graded}, $R$ is left Noetherian, 
hence it is affine by Lemma~\ref{lemma:graded-Noetherian}.
By hypothesis \ref{item:diag-finiteGK}, $\GK R < \infty$. Moreover, $H$ is affine by 
Theorem~\ref{thm:pointed noeth-affine}, $\Gamma$ is finitely generated by Proposition~\ref{prop:pointed-affine-gp-fg}, and is nilpotent-by-finite by assumption. Therefore $\GK \ku \Gamma < \infty$.
By Theorem~\ref{thm:zhuang}, 
\[
\GK H =  \GK R + \GK \ku \Gamma < \infty.\qedhere 
\]
\end{proof}

\begin{lemma}\label{lem:Noetherian-finiteGK}
Let $H$ be a pointed affine Hopf algebra with $\GK H < \infty$
such that the diagram $R$ of $H$ is affine. 
If \ref{item:GK-R-finita} implies \ref{item:R-Noetherian} for $R$, 
then $H$ is Noetherian.
\end{lemma}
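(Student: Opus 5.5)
The plan is to pass from $H$ to $\gr H \simeq R\#\ku\Gamma$, with $\Gamma=G(H)$, and then lift Noetherianity back to $H$ through the coradical filtration. Everything reduces to checking that $R$ satisfies \ref{item:GK-R-finita} and that $\Gamma$ is polycyclic-by-finite.

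First I control $\Gamma$. Since $H$ is pointed and affine, Proposition~\ref{prop:pointed-affine-gp-fg} shows that $\Gamma$ is finitely generated. Because $\ku\Gamma\subseteq H$, we have $\GK \ku\Gamma \le \GK H<\infty$, so $\Gamma$ has polynomial growth. By Gromov's theorem $\Gamma$ is nilpotent-by-finite. A finitely generated nilpotent-by-finite group is polycyclic-by-finite, since finitely generated nilpotent groups are polycyclic and a finite-index subgroup of a finitely generated group is finitely generated.

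Second I check \ref{item:GK-R-finita} for $R$. By hypothesis $R$ is affine, so Theorem~\ref{thm:zhuang} applies and gives $\GK R+\GK\ku\Gamma=\GK H<\infty$; in particular $\GK R<\infty$. Now $R$ is a post-Nichols algebra of $R^1\in\yd{\ku\Gamma}$ with $\Gamma$ polycyclic-by-finite, and the assumed implication \ref{item:GK-R-finita}$\Rightarrow$\ref{item:R-Noetherian} shows that $R$ is left Noetherian.

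Finally I lift. By Proposition~\ref{prop:smash-Noetherian}, applied to the $\Gamma$-module algebra $R$, the algebra $R\rtimes\Gamma$ is left Noetherian. As algebras, the bosonization $R\#\ku\Gamma$ is exactly the smash product $R\rtimes\Gamma$, so $\gr H$ is left Noetherian. Proposition~\ref{prop:graded-Noetherian}, applied to the coradical filtration (an algebra filtration for a pointed Hopf algebra), then gives that $H$ is left Noetherian. Since the antipode of $H$ is bijective by Proposition~\ref{prop:antipode-pointed}, $H$ is Noetherian.

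The main obstacle is the second step: invoking Theorem~\ref{thm:zhuang} needs $R$ to be affine, and that is exactly why this hypothesis appears in the statement. Example~\ref{exa:R-not affine} shows it does not follow from $H$ being affine. The other steps are direct applications of earlier results.
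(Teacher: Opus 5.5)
Your proposal is correct and follows the paper's proof: pass to $\gr H\simeq R\#\ku\Gamma$, show $\Gamma$ is polycyclic-by-finite via Proposition~\ref{prop:pointed-affine-gp-fg} and Gromov's theorem, get $R$ left Noetherian from the assumed implication, then apply Proposition~\ref{prop:smash-Noetherian} and Proposition~\ref{prop:graded-Noetherian}. The only difference is in bounding $\GK R$ and $\GK\ku\Gamma$: the paper uses $\GK\gr H\le\GK H$ (Proposition~\ref{prop:kl-graded-filtered}) and the inclusions $R,\ku\Gamma\subseteq\gr H$, not Theorem~\ref{thm:zhuang}, so affineness of $R$ is needed only because it is part of condition \ref{item:GK-R-finita}, not to obtain $\GK R<\infty$ as your closing remark suggests.
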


\pf
Let  $\gr H \simeq R \# \ku \Gamma$  be as above. 
By Proposition \ref{prop:kl-graded-filtered}, $\GK \gr H < \infty$. 
Hence $\GK R < \infty$ and $\GK \ku \Gamma < \infty$. 
If \ref{item:GK-R-finita} implies \ref{item:R-Noetherian}, then $R$ is left Noetherian.
Proposition \ref{prop:pointed-affine-gp-fg} guarantees
that  $\Gamma$ is finitely generated.
Since $\GK\ku\Gamma<\infty$, it follows from Gromov Theorem that  $\ku\Gamma$ is nilpotent-by-finite, and hence polycyclic-by-finite. 
Thus $\gr H$ is Noetherian by Proposition \ref{prop:smash-Noetherian}, and $H$ is Noetherian by Proposition \ref{prop:graded-Noetherian}.
\epf

\subsection{Pre- and post-Nichols algebras}
We  shall approach post-Nichols algebras through pre-Nichols algebras.
For a wider range of applications, it is convenient to state the following vast generalisation of Conjecture~\ref{conj:nichols-noeth}, which includes the generalisation of 
Conjecture~\ref{conj:nichols-noeth} with arbitrary group algebras.

\begin{conjecture}\label{conj:nichols-noeth-gral} 
Let $J$ be a Hopf algebra. Let $R$ be a post-Nichols algebra of  
$V \in \yd{J}$.
The following statements are equivalent:
\begin{enumerate}
[leftmargin=7ex,label=\rm{\textcolor{blue}{(\ref{conj:nichols-noeth-gral}\alph*)}}]
\item\label{item:GK-R-finita-gral} $R$ is affine and $\GK R$  is finite.

\medbreak 
\item\label{item:R-Noetherian-gral} $R$ is left Noetherian.
\end{enumerate}
\end{conjecture}

Notice that \ref{item:R-Noetherian-gral} implies 
that $R$ is affine and the homogeneous components of $R$ 
are finite-dimensional by Lemma \ref{lemma:graded-Noetherian},
hence $\dim V < \infty$. So we may assume this last condition.

\medbreak 
Recall that a pre-Nichols algebra of $V$ is a graded connected Hopf algebra $\Bc = \bigoplus_{n \geq 0} \Bc^n \in \yd{J}$
such that $\Bc^1 \simeq V$ generates the algebra $\Bc$. 
Thus there is a surjective map $\Bc \twoheadrightarrow\toba(V)$
of graded   Hopf algebras in $\yd{J}$.

\medbreak
Conversely, if $R= \bigoplus_{n \geq 0} R^{n}$ is a post-Nichols algebra of  $V$, then
there is an inclusion $\toba(V) \hookrightarrow R$ of graded  
Hopf algebras in $\yd{J}$ and the graded dual \footnote{Here we mean \emph{right} dual.}
$R^d= \bigoplus_{n \geq 0} (R^{n})^*$
is a pre-Nichols algebra of $V^*$.

\begin{lemma}\label{lema:pre-post}\cite{AAH1}*{Lemma 3.3}
Let $J$ be a Hopf algebra, 
let $\Bc$ be a pre-Nichols algebra of $V \in \yd{J}$ with $\dim V < \infty$,
and let $R \coloneqq \Bc^d$ be the graded dual of $\Bc$. 
Then $\GK R \leq \GK \Bc$; if $R$ is affine,
then the equality holds.
\end{lemma}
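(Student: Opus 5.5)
We compare the growth of $\Bc$ and of $R=\Bc^d$ through their Hilbert series. Both are graded by $\N_0$ with finite-dimensional homogeneous components: $\Bc$ is generated by $\Bc^1\simeq V$ with $\dim V<\infty$, so $\dim \Bc^n\le (\dim V)^n$, and $R^n=(\Bc^n)^*$. Hence $\dim R^n=\dim \Bc^n$ for all $n$, and the two Hilbert series coincide. Put $a_n \coloneqq \sum_{k\le n}\dim \Bc^k=\sum_{k\le n}\dim R^k$.

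\emph{Step 1: GK of the generated algebra $\Bc$.} Since $\Bc$ is generated by the finite-dimensional subspace $W=\ku\oplus \Bc^1$, we have $W^n=\bigoplus_{k\le n}\Bc^k$. Therefore
\[
\GK \Bc=\limsup_{n\to\infty}\frac{\log a_n}{\log n}.
\]

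\emph{Step 2: the inequality $\GK R\le \GK\Bc$.} Let $U\subset R$ be an arbitrary finite-dimensional subspace containing $1$. It lies in $\bigoplus_{k\le d}R^k$ for some $d$, and then $U^n\subseteq \bigoplus_{k\le dn}R^k$. This gives $\dim U^n\le a_{dn}$, and so
\[
\limsup_n \frac{\log\dim U^n}{\log n}\le \limsup_n\frac{\log a_{dn}}{\log (dn)}\cdot\frac{\log(dn)}{\log n}\le \GK\Bc.
\]
Taking the supremum over $U$ yields $\GK R\le\GK\Bc$.

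\emph{Step 3: equality when $R$ is affine.} Pick finitely many homogeneous generators of $R$, say of degrees at most $D$, and let $U$ be $\ku$ together with their span. Every element of degree $\le n$ is a sum of words in the generators. Each generator has degree $\ge1$, since $R$ is connected and $R^0=\ku$. So a word of degree $\le n$ has length $\le n$, which gives $\bigoplus_{k\le n}R^k\subseteq U^n$ and thus $a_n\le\dim U^n$. It follows that $\GK R\ge\limsup_n \log a_n/\log n=\GK\Bc$, and combined with Step 2 we get equality.

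The main obstacle is Step 3. Without affineness, a finite-dimensional generating subspace need not exist, so the Hilbert series of $R$ need not control its GK dimension from below. The key input that makes the argument work is that generators have positive degree, which relies on connectedness of $R$.
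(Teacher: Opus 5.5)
Your proof is correct, and it is essentially the same argument as the source: you use $\dim R^n=\dim\Bc^n$, bound $U^n\subseteq\bigoplus_{k\le dn}R^k$ to get the inequality, and use homogeneous generators of positive degree to get $\bigoplus_{k\le n}R^k\subseteq U^n$ for equality. The paper gives no proof of its own and simply cites \cite{AAH1}*{Lemma 3.3}, where this standard Hilbert-series comparison is the argument.
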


\medbreak
To deal with the implication
\ref{item:GK-R-finita-gral} $\implies$ \ref{item:R-Noetherian-gral},
the following notion is useful, see  \cite{andrus-sanmarco}*{Subsection 2.5} for details.
Let $V \in\yd{J}$, $\dim V < \infty$. 
Let $\pre(V)$ be the set of all  pre-Nichols algebras of $V$ in 
$\yd{J}$. 
This is a poset with respect to
the order $\Bc \leq \Bc'$ if and only if there exists a morphism 
$\Bc \to \Bc'$ of graded Hopf algebras in $\yd{\ku \Gamma}$ being an isomorphism in degree one. In particular, such morphisms are surjective. Moreover, $\Bc \leq \toba(V)$ for any $\Bc \in \pre(V)$. Hence $\toba(V)$ is a top element of $\pre(V)$. Let $\pref(V)$ be the subposet of $\pre(V)$ consisting of the pre-Nichols algebras with finite $\GK$.  It is an upper set of $\pre(V)$.

\medbreak
Dually let $\post(V)$ 
be the set of all  post-Nichols algebras of $V$ in 
$\yd{J}$.  
This is a poset with respect to
$\Bc \leq \Bc'$ if and only if there exists a morphism 
$\Bc \to \Bc'$ of graded Hopf algebras in $\yd{\ku \Gamma}$ being an isomorphism in degree one. In particular, such morphisms are injective. Moreover, $\toba(V)\leq \Bc$ for any 
$\Bc \in \post(V)$. Hence $\toba(V)$ is a bottom element of $\post (V)$. Let $\postn(V)$ be the subposet of $\post(V)$ consisting of the left Noetherian post-Nichols algebras.
It is a lower set of $\post (V)$.

\medbreak
Note that $\Bc \in \pre (V)$ if and only if $\Bc^d\in \post (V^*)$.

\begin{cor}\label{cor:pre-post}
Let $V \in\yd{J}$, $\dim V < \infty$. Assume that 
\begin{align}\label{eq:assumption}
\Bc^d \in \postn(V) \text{ for all } \Bc \in \pref(V^*).
\end{align}
If $R$ is an affine post-Nichols algebra of $V$ with finite $\GK$, then $R$
is left Noetherian. 
\end{cor}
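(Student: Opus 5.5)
The plan is to pass to the graded dual and back. Let $R$ be an affine post-Nichols algebra of $V$ with $\GK R<\infty$, and set $\Bc \coloneqq R^d = \bigoplus_{n\ge 0}(R^n)^*$. For this to make sense we need $\dim R^n<\infty$ for all $n$. Since $R$ is affine and graded connected, we may choose finitely many homogeneous generators, so each $R^n$ is spanned by finitely many monomials in them. By the discussion preceding Lemma~\ref{lema:pre-post}, $\Bc$ is then a pre-Nichols algebra of $V^*$; equivalently, $\Bc\in\pre(V^*)$ and $\Bc^d\in\post(V)$.

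Next I check that $\Bc\in\pref(V^*)$. Lemma~\ref{lema:pre-post} gives $\GK \Bc^d\le \GK\Bc$, with equality when $\Bc^d$ is affine, so it applies to the pre-Nichols algebra $\Bc$ of $V^*$. The key point is that $\Bc^d\simeq R$ canonically. Indeed, because every homogeneous component is finite dimensional, the double graded dual recovers $R$ as a graded Hopf algebra in $\yd{J}$; one has to keep track of the right/left dual convention in the footnote, but in the finite-dimensional graded setting the double dual is naturally isomorphic to the original object. Since $R$ is affine, the equality case holds, so $\GK\Bc=\GK R<\infty$ and $\Bc\in\pref(V^*)$.

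Now assumption \eqref{eq:assumption} gives $\Bc^d\in\postn(V)$, that is, $\Bc^d$ is left Noetherian. Since $R\simeq\Bc^d$ as graded algebras, $R$ is left Noetherian.

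The only delicate step is the identification $(R^d)^d\simeq R$ as objects of $\post(V)$. This requires the finite dimensionality of the homogeneous components, which we obtained from affineness, and some care with the dual conventions in $\yd{J}$, in particular the use of the bijectivity of the antipode of $J$ to pass between left and right duals. Everything else is a direct application of Lemma~\ref{lema:pre-post} and the hypothesis.
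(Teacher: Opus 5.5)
Your proposal is correct and follows the paper's proof. You set $\Bc = R^d$ and use the equality case of Lemma~\ref{lema:pre-post}, which applies because $\Bc^d \simeq R$ is affine, to get $\GK \Bc = \GK R < \infty$, so $\Bc \in \pref(V^*)$; then \eqref{eq:assumption} shows that $R \simeq \Bc^d$ is left Noetherian. Your additional remarks (finite-dimensionality of the $R^n$ from affineness, and the identification $(R^d)^d \simeq R$) make explicit what the paper leaves implicit.
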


\pf  If   $R$ 
satisfies \ref{item:GK-R-finita-gral}, then the pre-Nichols
algebra $\Bc = R^d$ has finite $\GK$ by Lemma \ref{lema:pre-post}.
By the assumption \eqref{eq:assumption}, $R \simeq \Bc^d$ is Noetherian.
\epf

\medbreak
Following \cite{andrus-sanmarco}, $\Ec \in \pref(V)$ is said to be \emph{eminent}
if $\Ec$ is a bottom element of $\pref(V)$, that is, if 
$\Ec \leq \Bc$ for any $\Bc \in \pref(V)$. There are Yetter-Drinfeld
modules that do not admit an eminent pre-Nichols algebra.

\begin{lemma}
\label{lemma:eminent-Noetherian}
Let $V \in\yd{J}$ with $\dim V < \infty$. If
$V^*$ admits an eminent pre-Nichols algebra $\Ec$
whose graded dual $\Ec^d$ is Noetherian,
then $V$ satisfies \eqref{eq:assumption}. Hence
any affine post-Nichols algebra of $V$ with finite $\GK$ 
is left Noetherian. 
\end{lemma}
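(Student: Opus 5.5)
To verify \eqref{eq:assumption}, take an arbitrary $\Bc \in \pref(V^*)$. We want to show that $\Bc^d \in \postn(V)$. Since $\Ec$ is eminent, $\Ec$ is the bottom element of $\pref(V^*)$, so $\Ec \leq \Bc$. This means there is a morphism $\pi\colon \Ec \to \Bc$ of graded Hopf algebras in $\yd{J}$ which is an isomorphism in degree one. Because $\Bc$ is generated by its degree-one part, $\pi$ is surjective, and it is homogeneous of degree zero.

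Next we dualize this surjection. Taking graded (right) duals gives a homogeneous morphism $\pi^d\colon \Bc^d \to \Ec^d$ of graded Hopf algebras in $\yd{J}$. It is injective, since $\pi$ is surjective in each degree and the homogeneous components are finite-dimensional. The components are indeed finite-dimensional because $\dim V<\infty$, so each component of a pre-Nichols algebra is a quotient of $T^n(V^*)$. Moreover, $\pi^d$ is an isomorphism in degree one. Hence $\Bc^d \leq \Ec^d$ in $\post(V)$, using the identification $(V^*)^* \simeq V$ and that $\Bc^d$ and $\Ec^d$ are post-Nichols algebras of $V$.

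Since $\Ec^d$ is left Noetherian, $\Ec^d \in \postn(V)$. As noted before Corollary~\ref{cor:pre-post}, $\postn(V)$ is a lower set of $\post(V)$, so $\Bc^d \in \postn(V)$. If one prefers to check the lower-set property directly, it goes as follows. Identify $\Bc^d$ with a Hopf subalgebra of the connected Hopf algebra $\Ec^d$ in $\yd{J}$; in particular it is a right coideal subalgebra. By Proposition~\ref{pro:braided_free}, $\Ec^d$ is a free, hence faithfully flat, module over $\Bc^d$. By the remark at the start of Section~\ref{sec:preliminaries}, $\Bc^d$ is then left Noetherian. This establishes \eqref{eq:assumption}.

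The final assertion now follows immediately from Corollary~\ref{cor:pre-post}. The only step needing care is the dualization: one must check that graded duality turns surjections of pre-Nichols algebras into inclusions of post-Nichols algebras compatible with the Yetter-Drinfeld structures. One must also check that freeness is on the correct side, so that the faithful-flatness argument yields left Noetherianity. Both points are handled by the finite-dimensionality of the components and by the two-sided freeness in Proposition~\ref{pro:braided_free}.
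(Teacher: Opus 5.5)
Your proposal is correct and follows the paper's argument. Eminence gives a surjection $\Ec\twoheadrightarrow\Bc$, graded duality turns it into an injection $\Bc^d\hookrightarrow\Ec^d$, freeness from Proposition~\ref{pro:braided_free} transfers left Noetherianity to $\Bc^d$, and Corollary~\ref{cor:pre-post} finishes; your remarks on finite-dimensional components and faithful flatness from two-sided freeness simply make explicit what the paper leaves implicit.
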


\pf Let $\Bc \in \pref(V^*)$. By hypothesis,
there exists a surjective morphism 
$\Ec \to \Bc$ of graded Hopf algebras in $\yd{J}$ such that $\Ec^d$ is Noetherian. Hence there is an injective morphism 
$\Bc^d \to \Ec^d$ of the same sort.
By Proposition~\ref{pro:braided_free}, $\Bc^d$ is Noetherian; 
thus \eqref{eq:assumption} holds and Corollary \ref{cor:pre-post} applies.
\epf

\section{Post-Nichols algebras of diagonal type} \label{sec:diagonal-type}
Here we discuss Conjectures \ref{conj:nichols-noeth}
and \ref{conj:nichols-noeth-gral} under the assumption that $V$ is of diagonal type.

 \subsection{Noetherian Nichols algebras of diagonal type}
 In this Subsection we show that Noetherian Nichols algebras of diagonal type
 have finite $\GK$. 
 In fact, we present a fairly general result that implies this.
 
 \medbreak
 Let $J$ be a Hopf algebra and  let $V\in\yd{J}$ be finite-dimensional and semisimple, that is,
 $V=\bigoplus_{i=1}^\theta V_i$ where each $V_i\in\yd{J}$ is finite-dimensional and simple. Then $\toba(V)$ is $\N_0^\theta $-graded.
 Let $\Kc(\toba(V))$ be the set of $\N_0^\theta$-graded right coideal subalgebras of $\toba(V)$, see \cite{hs-book}*{Definition~14.1.3}. 
 
 \medbreak
 A poset with no infinite strictly increasing chains is called a \emph{well‑founded poset}.
 
 \begin{lemma}\label{lema:well-founded}
 If $\toba(V)$ is Noetherian, then $\Kc(\toba(V))$ is well-founded.
 \end{lemma}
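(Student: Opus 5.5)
Given a strictly increasing chain $C_1 \subsetneq C_2 \subsetneq \cdots$ in $\Kc(\toba(V))$, I will produce a strictly increasing chain of left ideals of $\toba(V)$, namely $I_k \coloneqq \toba(V) C_k^+$, where $C_k^+ = C_k \cap \ker \varepsilon$ is the augmentation ideal of $C_k$. Since each $C_k$ is a subalgebra, $I_k$ is a left ideal of $\toba(V)$, and clearly $I_1 \subseteq I_2 \subseteq \cdots$. If $\toba(V)$ is (left) Noetherian the chain $(I_k)$ stabilizes, and then it remains to show that $I_k = I_{k+1}$ forces $C_k = C_{k+1}$, giving a contradiction. (If Noetherian is meant on the right, the symmetric argument with $C_k^+\toba(V)$ works the same way, since Proposition~\ref{pro:braided_free} provides freeness on both sides.)

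To recover $C_k$ from $I_k$, I will use the standard correspondence between right coideal subalgebras and quotient left module coalgebras. For a right coideal subalgebra $C$ of the connected Hopf algebra $\toba(V)$ in $\yd{J}$, Proposition~\ref{pro:braided_free} says $\toba(V)$ is a free (hence faithfully flat) module over $C$. With this, the Takeuchi-type correspondence applies: $C$ equals the coinvariants of the quotient map
\[
\toba(V) \to \toba(V)/\toba(V)C^+ .
\]
Concretely, $C = \{x \in \toba(V) : (\mathrm{id}\otimes \pi)\Delta(x) = x \otimes \bar 1\}$, where $\pi$ is the projection. This formula is stated in the braided setting for $\N_0^\theta$-graded coideal subalgebras in \cite{hs-book}*{Chapter 12--14}, and I would cite it from there. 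Since $C_k$ is determined by $I_k$, the condition $I_k = I_{k+1}$ yields $C_k = C_{k+1}$.

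The main obstacle is making sure that the bijection $C \mapsto \toba(V)C^+$ really holds in the braided (Yetter--Drinfeld) setting with $C$ only graded and not necessarily a Yetter--Drinfeld submodule. If citing \cite{hs-book} is not directly available, I would proceed in two steps:
\begin{enumerate}
\item use freeness to choose a basis of $\toba(V)$ as a right $C$-module containing $1$, so that $\toba(V) = C \oplus \toba(V)C^+$ as vector spaces;
\item check, via the right coideal property $\Delta(C)\subseteq \toba(V)\otimes C$ applied on the appropriate side, that coinvariants of the quotient lie in $C$ by a degree induction using the $\N_0^\theta$-grading.
\end{enumerate}
Finite-dimensionality of the homogeneous components is not needed here; it is enough that the grading is by $\N_0^\theta$ and connected, which makes the induction on degree well defined.
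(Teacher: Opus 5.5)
Your main line is the paper's proof. The paper also invokes the inclusion-preserving correspondence of \cite{hs-book}*{Theorem~6.3.2} between $\Kc(\toba(V))$ and graded one-sided coideal ideals (it phrases this with coideal right ideals; which side appears depends only on the coideal convention) and concludes from the ascending chain condition, exactly as you do with $C\mapsto\toba(V)C^+$ and the recovery of $C$ as coinvariants.

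The one error is in your fallback step (1). Since $\varepsilon(\toba(V)C^+)=0$, one has $C\cap\toba(V)C^+=C^+$. So the sum $C+\toba(V)C^+$ is not direct unless $C=\ku$, and it equals $\ku1\oplus\toba(V)C^+$, which is not all of $\toba(V)$ unless $C=\toba(V)$. What freeness actually gives, for a homogeneous basis with $b_0=1$, is $\toba(V)=\bigoplus_i b_iC$ and $\toba(V)C^+=\bigoplus_i b_iC^+$. The degree argument of step (2) should then be run on the coefficients of a coinvariant $x=\sum_i b_ic_i$: comparing the top-degree part of the second tensor factor forces $c_i=0$ for $i\neq0$.
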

 
 \pf 
 It follows from \cite{hs-book}*{Theorem \,6.3.2} that there is a correspondence between 
 $\Kc(\toba(V))$ and the set of $\N_0^\theta$-graded coideal right ideals 
 respecting inclusion; since $\toba(V)$ is Noetherian, the latter
 is well-founded.
 \epf
 
 \begin{definition} \cite{hs-book}*{Definition 13.4.2}
 Let $i\in \I_{\theta}$. 
 We say that $V$ is $i$-finite if for
 all $j \in \I_{\theta}\backslash\{i\}$ there exists $m \in \N$ such that $(\ad V_i)^{m}(V_j) = 0$ 
 in $\toba(V)$.
 \end{definition}
 
 \begin{lemma}\label{lema:i-finite}
 If $\Kc(\toba(V))$ is well-founded, then $V$ is $i$-finite for all $i\in \I_{\theta}$.
 \end{lemma}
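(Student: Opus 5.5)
We argue by contraposition: assuming $V$ is not $i$-finite for some $i \in \I_\theta$, we construct an infinite strictly increasing chain in $\Kc(\toba(V))$.

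So suppose there is $j\neq i$ with $(\ad V_i)^m(V_j)\neq 0$ in $\toba(V)$ for all $m\in\N$. The natural candidates for the chain are the subalgebras
\begin{align*}
K_m \coloneqq \ku\bigl\langle V_i,\ \ad V_i(V_j),\ \dots,\ (\ad V_i)^{m}(V_j)\bigr\rangle ,\qquad m\ge 0,
\end{align*}
or, more conveniently, their counterparts given by the theory of right coideal subalgebras in \cite{hs-book}. There one considers $\toba(V)$ as a braided Hopf algebra and uses the decomposition $\toba(V)\simeq K_i\otimes \toba(V_i)$, where $K_i=\toba(V)^{\operatorname{co}\toba(V_i)}$. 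This $K_i$ is generated by the elements $(\ad V_i)^m(V_j)$, $m\ge 0$, $j\neq i$, and is the Nichols algebra of $L_i=\bigoplus_{j\ne i,\,m\ge0}(\ad V_i)^m(V_j)$. For $m \geq 0$ we will take $C_m$ to be the $\N_0^\theta$-graded right coideal subalgebra generated by $V_i$ together with $(\ad V_i)^{k}(V_j)$ for $0\le k\le m$.

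The verification proceeds in three steps.
\begin{enumerate}[leftmargin=*,label=\rm{(\arabic*)}]
\item Each $C_m$ is an $\N_0^\theta$-graded right coideal subalgebra. Since $\Delta$ of $(\ad V_i)^k(V_j)$ lies in $(\ad V_i)^k(V_j)\otimes 1$ plus terms of the form $\toba(V)\otimes(\ad V_i)^{l}(V_j)$ with $l\le k$ and $\toba(V)\otimes \toba(V_i)$, the right tensorand stays inside $C_m$. Alternatively, take $C_m$ to be the smallest element of $\Kc(\toba(V))$ containing these generators; this exists because $\Kc$ is closed under intersections.
\item The chain is increasing: $C_m\subseteq C_{m+1}$ by construction.
\item The inclusions are strict. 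For this one compares $\N_0^\theta$-degrees: the degree $m\alpha_i+\alpha_j$ component of $C_m$ should miss $(\ad V_i)^{m+1}(V_j)$, which has degree $(m+1)\alpha_i+\alpha_j$. More precisely, show that $C_m\cap \toba(V)_{(m+1)\alpha_i+\alpha_j}$ equals $V_i\cdot\bigl(\toba(V)_{m\alpha_i+\alpha_j}\cap C_m\bigr)+\dots$, spanned by products. Then use the PBW-type isomorphism $\toba(V)\simeq K_i\otimes\toba(V_i)$ (multiplication) together with the fact that $(\ad V_i)^{m+1}(V_j)$ is a nonzero primitive-like element of $K_i$ in degree one with respect to the braided vector space $L_i$. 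By contrast, $C_m\cap K_i$ is generated by $\bigoplus_{k\le m}(\ad V_i)^k(V_j)$, and therefore in degree one of $\toba(L_i)$ it contains only those summands.
\end{enumerate}

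Strictness gives an infinite strictly increasing chain in $\Kc(\toba(V))$, contradicting well-foundedness, and hence proves the lemma.

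The main obstacle is step (3), namely showing $(\ad V_i)^{m+1}(V_j)\notin C_m$. The plan there is to use $C_m=(C_m\cap K_i)\,\toba(V_i)$, which follows from the fact that right coideal subalgebras containing $\toba(V_i)$ correspond to right coideal subalgebras of $K_i$ in the category of Yetter--Drinfeld modules over $\toba(V_i)\#J$. We then identify $C_m\cap K_i$ with the subalgebra of $\toba(L_i)$ generated by part of $L_i$. Since $\toba(L_i)$ is $\N_0$-graded with $L_i$ in degree one, and the summands $(\ad V_i)^k(V_j)$ are independent in $L_i$, the degree-one part of that subalgebra is exactly the span of its generators, so the new summand $(\ad V_i)^{m+1}(V_j)$ is not contained in it.
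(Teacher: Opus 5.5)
\textbf{There is a genuine gap: your chain is constant from the start.} The problem is including $V_i$ among the generators. For $x\in V_i$ (primitive) and $y$ in the Yetter--Drinfeld submodule $(\ad V_i)^m(V_j)$, the braided adjoint action is $\ad x(y)=xy-(x_{(-1)}\cdot y)\,x_{(0)}$. Here $x_{(-1)}\cdot y\in(\ad V_i)^m(V_j)$ and $x_{(0)}\in V_i$. So every subalgebra containing $V_i$ and $(\ad V_i)^m(V_j)$ already contains $(\ad V_i)^{m+1}(V_j)$. By induction, $C_m=C_0$ for all $m$, where $C_0$ is (the coideal closure of) the subalgebra generated by $V_i$ and $V_j$.

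Hence step (3) is false. In degree $(m+1)\alpha_i+\alpha_j$, $C_m$ contains both $V_i\cdot(\ad V_i)^m(V_j)$ and $(\ad V_i)^m(V_j)\cdot V_i$, and their braided commutator is exactly the element you want to exclude. Your claim that $C_m\cap K_i$ is generated by $\bigoplus_{k\le m}(\ad V_i)^k(V_j)$ fails for the same reason. It also contradicts the correspondence you invoke: a subobject in $\yd{\toba(V_i)\# J}$ is stable under the adjoint action of $\toba(V_i)$, so once it contains $V_j$ it contains every $(\ad V_i)^k(V_j)$.

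\textbf{The repair is to drop $V_i$, which is exactly the paper's proof.} Take $E_n=\ku\langle(\ad V_i)^k(V_j)\mid 0\le k\le n\rangle$. Your own coproduct computation in step (1) shows $V_i$ is not needed for the coideal property. The right tensorands of $\Delta\bigl((\ad V_i)^k(V_j)\bigr)$ lie in $\ku 1+\sum_{l\le k}(\ad V_i)^l(V_j)$; the $\toba(V)\otimes\toba(V_i)$ terms you allow contribute only scalars on the right. The paper records that $E_n\in\Kc(\toba(V))$.

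Strictness is then a pure degree count, with no need for $K_i\simeq\toba(L_i)$. Each generator of $E_n$ is homogeneous of degree $k\alpha_i+\alpha_j$ with $k\le n$. A product of $r\ge 1$ generators therefore has $\alpha_j$-coefficient $r$, so the component of $E_n$ of degree $(n+1)\alpha_i+\alpha_j$ is zero. But $(\ad V_i)^{n+1}(V_j)\neq 0$ lies in that degree, so $E_n\subsetneq E_{n+1}$ for all $n$, contradicting well-foundedness.
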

 
 \pf
 If $V$ is not $i$-finite, then there is $j\in \I_{\theta}\setminus \{i\}$ such that $(\ad V_i)^n(V_j)\ne 0$ for all $n \geq 0$. Let
 \[ E_n=\ku\langle (\ad\,V_i)^k(V_j)\mid 0\le k\le n\rangle  \in \Kc(\toba(V))\]
 for all $n\ge 0$.
 Then $(E_n)_{n\ge 0}$ is a strictly increasing  sequence of elements of $\Kc(\toba(V))$, and hence $\Kc(\toba (V))$ is not well-founded. 
 \epf
 
 Given $i\in \I_{\theta}$, $R_i(V)$ denotes the $i$-th reflection of $V$, which exists
 because $V$ is $i$-finite.
 
 \begin{lemma}
 \label{lem:reflections_are_well-founded}
 If $\Kc(\toba(V))$ is well-founded, then $\Kc(\toba(R_i(V)))$ is well-founded for all $i\in \I_{\theta}$.
 \end{lemma}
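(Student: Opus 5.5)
The plan is to transport right coideal subalgebras of $\toba(R_i(V))$ to right coideal subalgebras of $\toba(V)$ by an order-preserving injection, so that an infinite strictly increasing chain in $\Kc(\toba(R_i(V)))$ would yield one in $\Kc(\toba(V))$. By Lemma~\ref{lema:i-finite}, $V$ is $i$-finite, so the reflection $R_i(V)$ exists. We use the theory of \cite{hs-book} (Chapters 13--14), which relates $\toba(V)$ and $\toba(R_i(V))$ through the projection onto the Nichols algebra of $V_i$.

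\emph{Step 1: the decomposition.} Write $\toba(V)\simeq K_i\#\toba(V_i)$, where $K_i$ is the right coideal subalgebra generated by the elements $(\ad V_i)^m(V_j)$ with $j\neq i$. Similarly, $\toba(R_i(V))\simeq K_i'\#\toba(V_i^*)$. The reflection theorem of \cite{hs-book} identifies $K_i'$ with (a twisted version of) $K_i$, as algebras up to the $\N_0^\theta$-regrading given by the reflection $s_i$.

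\emph{Step 2: the key transfer.} The main tool is the bijection $\Omega_i$ of \cite{hs-book}*{Chapter 14}. For $C\in\Kc(\toba(V))$ containing $V_i$, or more generally satisfying the appropriate compatibility with $\toba(V_i)$, it produces $C'\in\Kc(\toba(R_i(V)))$, roughly by taking $C\cap K_i$, transporting it to $K_i'$, and multiplying by $\toba(V_i^*)$. The relevant theorem there gives an order isomorphism between
\begin{itemize}
\item the subposet of $\Kc(\toba(V))$ of coideal subalgebras containing $\toba(V_i)$, and
\item the subposet of $\Kc(\toba(R_i(V)))$ of those $E'$ with $E'\cap\toba(V_i^*)=\ku$, or the dual condition.
\end{itemize}
This correspondence is given by $E\mapsto$ the image of $E\cap K_i$ tensored appropriately.

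\emph{Step 3: reducing an arbitrary chain.} Given a strictly increasing chain $(E'_n)$ in $\Kc(\toba(R_i(V)))$, we split into cases.
\begin{itemize}
\item If infinitely many $E'_n$ contain $V_i^*$, then from some point on all of them contain $\toba(V_i^*)$. We then apply the inverse reflection $R_i(R_i(V))\simeq V$ together with the Step 2 correspondence to obtain a strictly increasing chain in $\Kc(\toba(V))$.
\item Otherwise, no $E'_n$ contains $V_i^*$ from some point on. Then $E'_n\cap\toba(V_i^*)=\ku$, because an $\N_0^\theta$-graded coideal subalgebra meeting $\toba(V_i^*)$ nontrivially contains a nonzero primitive element of degree $\alpha_i$, hence $V_i^*$ by simplicity. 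In this case we pass to $E'_n\toba(V_i^*)$. This is still a right coideal subalgebra, the chain is still strictly increasing by the tensor decomposition (compare Hilbert series), and it contains $\toba(V_i^*)$. This reduces us to the first case.
\end{itemize}

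\emph{Main obstacle.} The hardest step is Step 2: checking that the correspondence from \cite{hs-book} is strictly monotone and is defined on the needed subposets. A second delicate point is verifying in Step 3 that $E'\toba(V_i^*)$ is a subalgebra, which should follow from $E'\supseteq K'_i\cap E'$ being stable under the adjoint action of $V_i^*$.

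If this becomes unwieldy, the fallback is a cruder transfer. We would use that $K_i'$ and $K_i$ are isomorphic as algebras and that every $E'\in\Kc(\toba(R_i(V)))$ is determined by the pair consisting of $E'\cap K_i'$ and $E'\cap\toba(V_i^*)$, the latter lying in a finite set since $\Kc(\toba(V_i^*))$ is tiny. The coideal subalgebras $E'\cap K_i'$ then give an injective chain in $\Kc(\toba(V))$.
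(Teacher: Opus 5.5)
Your architecture is the paper's. You split a strictly increasing chain $(E'_n)_{n\ge0}$ in $\Kc(\toba(R_i(V)))$ according to whether its members contain $(R_i(V))_i=V_i^*$. You then move it to $\Kc(\toba(V))$ via the Heckenberger--Schneider correspondence between right coideal subalgebras that contain the $i$-th summand and those that do not. This is the map $t_i$ of \cite{hs-book}*{Theorem 14.1.4(1)}; the strict monotonicity you flag as the main obstacle is what the paper takes from \cite{hs-book}*{Theorem 12.4.5}. Your first case, which applies this correspondence to $R_i(V)$ and uses $R_i^2(V)\simeq V$, is fine.

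Your second case breaks down. The space $E'\toba(V_i^*)$ is in general not a subalgebra, because nothing forces $E'$ (or $E'\cap K_i'$) to be stable under $\ad V_i^*$. Take $V$ of Cartan type $A_2$ with $q$ not a root of unity, and $i=1$. Then $R_1(V)$ is again of type $A_2$. Let $y_1\in V_1^*$, $y_2$ be its basis and $(q'_{jk})$ its braiding matrix, so $q'_{12}q'_{21}=q^{-1}\ne1$ and $y_1y_2$, $y_2y_1$ are linearly independent. The Hopf subalgebra $E'=\ku[y_2]$ lies in $\Kc(\toba(R_1(V)))$ and does not contain $y_1$, yet $y_1y_2\notin\mathrm{span}\{y_2^ay_1^b : a,b\ge0\}$. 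Replacing $E'_n\toba(V_i^*)$ by the right coideal subalgebra generated by $E'_n$ and $V_i^*$ does not rescue the argument either, since strictness is lost. Indeed, $\ku[y_2]\subsetneq\ku\langle y_2,\,y_2y_1-q'_{21}y_1y_2\rangle$ in $\Kc(\toba(R_1(V)))$, but together with $y_1$ both generate all of $\toba(R_1(V))$.

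The detour through the first case is unnecessary. In the second case you have already shown $E'_n\cap\toba(V_i^*)=\ku$ for all $n$. So the $E'_n$ lie precisely in the target subposet of your Step~2 order isomorphism, and its inverse yields directly an infinite strictly increasing chain in $\Kc(\toba(V))$, consisting of elements containing $V_i$. This is exactly the paper's proof, which uses $t_i$ in one case and $t_i^{-1}$ in the other.

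Your fallback is not a viable alternative. Its premise, that $E'$ is determined by the pair $(E'\cap K_i',\,E'\cap\toba(V_i^*))$, already fails in the example above. With $K_1'=\ku\langle y_2,\,y_1y_2-q'_{12}y_2y_1\rangle$ as in your Step~1, both $\ku[y_2]$ and $\ku\langle y_2,\,y_2y_1-q'_{21}y_1y_2\rangle$ give the pair $(\ku[y_2],\ku)$. Moreover, showing that the transported pieces land in $\Kc(\toba(V))$ is itself the content of the Heckenberger--Schneider bijection.
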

 
 \pf
 Let $i\in \I_{\theta}$. By the previous lemma, $V$ is $i$-finite. Assume that there is an infinite strictly increasing sequence $(E_n)_{n\ge 0}$ of elements of $\Kc(\toba(R_i(V)))$. We may assume that either none or else all of these elements contain $(R_i(V))_i$ as a subspace. 
 In the first case, let $(t_i(E_n))_{n\ge 0}$ be the infinite sequence in $\Kc(\toba(V))$ defined in  
 \cite{hs-book}*{Theorem \,14.1.4(1)}. By~\cite{hs-book}*{Theorem \,12.4.5}, 
 it is strictly increasing. 
 In the second case, by \textit{loc. cit.}, $(t_i^{-1}(E_n))_{n\ge 0}$ is an infinite strictly increasing sequence in $\Kc(\toba(V))$.
 \epf

 See  \cite{hs-book}*{Chapter 13} for the notion of root
 system. 
 
 \begin{lemma}
 \label{lem:well-founded->all_reflections}
 If $\Kc(\toba(V))$ is well-founded, then $V$ admits all reflections and the root system of $V$ is finite.
 \end{lemma}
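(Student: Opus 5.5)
The plan is to combine Lemmas~\ref{lema:i-finite} and \ref{lem:reflections_are_well-founded} to show that every object in the Weyl groupoid orbit of $V$ is $i$-finite for every $i$, and then to apply the results of \cite{hs-book}*{Chapters 13--14} on right coideal subalgebras.

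\textbf{Step 1: $V$ admits all reflections.} By Lemma~\ref{lema:i-finite}, $V$ is $i$-finite for every $i\in\I_\theta$, so $R_i(V)$ exists. By Lemma~\ref{lem:reflections_are_well-founded}, $\Kc(\toba(R_i(V)))$ is again well-founded. We then induct on the length of a sequence of reflections $R_{i_1}\cdots R_{i_k}(V)$. At each stage the current object has a well-founded poset of graded right coideal subalgebras. Hence it is $j$-finite for all $j$ by Lemma~\ref{lema:i-finite}, and its reflections exist and again have well-founded posets by Lemma~\ref{lem:reflections_are_well-founded}. This shows that $V$ admits all reflections, so by \cite{hs-book}*{Chapter 13} it has a Weyl groupoid and a root system.

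\textbf{Step 2: the root system is finite.} Here I would argue by contradiction. Suppose the set of positive roots of $V$ is infinite. Then there is an infinite reduced sequence $i_1,i_2,\dots$ in the Weyl groupoid, meaning each word $s_{i_1}\cdots s_{i_k}$ is of length $k$. In \cite{hs-book}*{Chapter 14} one associates to each such reduced word a right coideal subalgebra $E_k\in\Kc(\toba(V))$, built from the root vectors of the roots $\beta_1,\dots,\beta_k$ with $\beta_m=s_{i_1}\cdots s_{i_{m-1}}(\alpha_{i_m})$. Concretely, $E_k$ is obtained by iterating the maps $t_i^{-1}$ from \cite{hs-book}*{Theorem 14.1.4}, as in the proof of Lemma~\ref{lem:reflections_are_well-founded}. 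Equivalently, $E_k$ has Hilbert series $\prod_{m\le k}$ of the contributions of $\beta_m$.

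Reducedness gives that the $\beta_m$ are pairwise distinct positive roots. Therefore $E_1\subsetneq E_2\subsetneq\cdots$ is an infinite strictly increasing chain, contradicting well-foundedness. Hence the set of positive roots is finite, and so the root system is finite.

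\textbf{Main obstacle.} The delicate point is the construction in Step 2: producing the strictly increasing chain $E_k$ from an infinite reduced word, and checking both inclusion and strictness. My first attempt would be to use the bijection between $\Kc(\toba(V))$ and the morphisms in the Weyl groupoid ending at $V$ \cite{hs-book}*{Theorem 14.1.4, Chapter 14}, which respects the order given by length, i.e.\ prefixes of reduced words. If that correspondence requires finiteness of the root system a priori, I would instead build $E_k$ inductively, using $t_{i_1}^{-1}$ applied to the coideal subalgebra of $\toba(R_{i_1}(V))$ attached to $i_2,\dots,i_k$. Strictness would then follow from \cite{hs-book}*{Theorem 12.4.5}, as in Lemma~\ref{lem:reflections_are_well-founded}.
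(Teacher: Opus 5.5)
Your proposal is correct and follows the paper's proof. For the first part you iterate Lemma~\ref{lema:i-finite} and Lemma~\ref{lem:reflections_are_well-founded}; for the second, an infinite reduced sequence $(i_k)_{k\ge 1}$ gives the chain $E_n=E^{\toba(V)}(i_1,\dots,i_n)$ in $\Kc(\toba(V))$, and the obstacle you flag is exactly what the paper settles by citing \cite{hs-book}*{Theorem 14.1.9(6),(7)}, which builds these subalgebras for reduced sequences without assuming a finite root system and shows the chain is strictly increasing.
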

 
 \pf
 The first part of the claim follows from Lemma~\ref{lem:reflections_are_well-founded}. 
 Assume that the root system of $V$ is infinite. Then there is an infinite sequence
 $(i_k)_{k\ge 1}$ of elements of $\I_{\theta}$ such that
 $(i_1,\dots,i_n)$ is $(V_1,\dots,V_\theta)$-reduced for all $n\ge 0$; 
 see~\cite{hs-book}*{Definition \,9.2.1}. 
 Let $E_n=E^{\toba(V)}(i_1,\dots,i_n)$ for all $n\ge 0$, 
 as defined in ~\cite{hs-book}*{Theorem \,14.1.9(7)}.  
 Then~\cite{hs-book}*{Theorem \,14.1.9(6)} 
 implies that $(E_n)_{n\ge 0}$ is a strictly increasing sequence 
 in $\Kc(\toba(V))$, and hence $\Kc(\toba(V))$ is not well-founded.
 \epf

 \begin{theorem}
 \label{th:noeth-implies-finiteRS}
 Let $J$ be a Hopf algebra and let $V=\bigoplus_{i=1}^\theta V_i\in\yd{J}$ be finite-dimensional and semisimple. 
 If $\toba(V)$ is Noetherian, then the root system of $\toba (V)$ is finite.
 \end{theorem}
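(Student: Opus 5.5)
The theorem follows by chaining the lemmas just established. Assume $\toba(V)$ is Noetherian, with $V=\bigoplus_{i=1}^\theta V_i$ finite-dimensional and semisimple in $\yd{J}$. Then $\toba(V)$ is $\N_0^\theta$-graded and $\Kc(\toba(V))$ makes sense.

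First, Lemma~\ref{lema:well-founded} shows that $\Kc(\toba(V))$ is well-founded. The underlying reason is that the correspondence of \cite{hs-book}*{Theorem 6.3.2} between graded right coideal subalgebras and graded coideal right ideals preserves inclusions. So an infinite strictly increasing chain of coideal subalgebras would give an infinite strictly increasing chain of right ideals, which Noetherianity forbids.

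Next, Lemma~\ref{lem:well-founded->all_reflections} gives that $V$ admits all reflections and that its root system is finite. The content of that lemma rests on two inputs:
\begin{itemize}
\item Lemma~\ref{lema:i-finite} provides $i$-finiteness for every $i$, so each reflection $R_i(V)$ exists.
\item Lemma~\ref{lem:reflections_are_well-founded} transports well-foundedness to $R_i(V)$. Iterating along any sequence of reflections then shows that every object in the Weyl groupoid orbit of $V$ is $i$-finite for all $i$, so the Cartan graph and the root system of $V$ are defined.
\end{itemize}
Finiteness of the root system then comes from the contrapositive. If it were infinite, there would be an infinite reduced word $(i_1,i_2,\dots)$. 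The coideal subalgebras $E^{\toba(V)}(i_1,\dots,i_n)$ would then form a strictly increasing chain in $\Kc(\toba(V))$ by \cite{hs-book}*{Theorem 14.1.9(6)}, contradicting well-foundedness.

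The only delicate point is notational: "the root system of $\toba(V)$" in the theorem should be read as the root system of $V$ in the sense of \cite{hs-book}*{Chapter 13}. I would add a one-line remark identifying the two. With that identification the proof reduces to invoking Lemmas~\ref{lema:well-founded} and~\ref{lem:well-founded->all_reflections} in succession.
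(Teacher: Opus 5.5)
Your proposal is correct and matches the paper's proof: the paper also gets well-foundedness of $\Kc(\toba(V))$ from Lemma~\ref{lema:well-founded} and then invokes Lemma~\ref{lem:well-founded->all_reflections} for all reflections and finiteness of the root system. Your unpacking of those lemmas also agrees with their proofs in the paper: the inclusion-preserving correspondence of \cite{hs-book}*{Theorem 6.3.2}, iterated $i$-finiteness via Lemmas~\ref{lema:i-finite} and~\ref{lem:reflections_are_well-founded}, and the strictly increasing chain $E^{\toba(V)}(i_1,\dots,i_n)$.
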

 
 \pf Since $\toba(V)$ is Noetherian, $\Kc(\toba(V))$ is well-founded by Lemma \ref{lema:well-founded}.
 Then Lemma \ref{lem:well-founded->all_reflections} shows that $V$ admits all reflections and that
 the root system of $\toba(V)$ is finite. 
 \epf

 We focus now on the class of braided vector spaces of \emph{diagonal type},
 namely those pairs $(V, c^{\bq})$  where $V$ is a vector space with a basis $v_1,\dots ,v_\theta$,
 and $\bq=(q_{ij})\in(\Bbbk^{\times})^{\I_{\theta}\times\I_{\theta}}$ 
 is  the so-called braiding matrix, such that the braiding 
 $c^{\bq}: V\otimes V\to V\otimes V$ is given by 
 \begin{align} \label{eq:diag-br}
 c^{\bq}(v_i\otimes v_j)&=q_{ij}\,v_j\otimes v_i, & i,j&\in\I_{\theta}.
 \end{align}
 
 The following result, conjectured in~\cite{AAH2},  
 reduces to   \cite{heckenberger-adv} the classification of Nichols algebras of diagonal type with finite $\GK$.
 
 \begin{theorem}
 \cite{angiono-garciai-finiteGK}
 \label{thm:finiteGK}
 Let $(V, c^{\bq})$ be a braided vector space of diagonal type. 
 Then $\GK \toba(V) < \infty$ if and only if the corresponding root system is finite,  hence it 
 admits a PBW basis with a finite number of generators. 
 \end{theorem}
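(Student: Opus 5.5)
The plan is to prove the two implications separately. The direction ``finite root system $\Rightarrow$ $\GK\toba(V)<\infty$'' is the structural one and should be routine. The converse is the substance of \cite{angiono-garciai-finiteGK}, and it is where essentially all the work lies.

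\emph{Finite root system implies finite $\GK$.} Assume the root system $\Delta^V_+$ of $(V,c^{\bq})$ is finite. By Kharchenko's theorem, $\toba(V)$ has a PBW basis of ordered monomials $x_{\beta_1}^{n_1}\cdots x_{\beta_N}^{n_N}$ in root vectors attached to hyperwords. When the Weyl groupoid is finite, the PBW generators are in bijection with $\Delta^V_+$, and I would take this from \cite{hs-book}*{Chapter 14} via Lusztig-type reflections and $E^{\toba(V)}(i_1,\dots,i_N)$ for a longest reduced word. So there are finitely many generators $x_{\beta_1},\dots,x_{\beta_N}$, each of height $N_{\beta_k}\in\N\cup\{\infty\}$. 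The number of PBW monomials of total degree $\le n$ is then bounded by a polynomial of degree $\#\{k: N_{\beta_k}=\infty\}$. Since $\toba(V)$ is generated by $V$, its $\GK$ is computed by this filtration, and it equals the number of roots of infinite height. This also gives the ``hence'' clause of the statement.

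\emph{Infinite $\GK$ when the root system is infinite (or does not exist).} I would first dispose of the case where $V$ is not $i$-finite for some $i$. Then $(\ad_c v_i)^n(v_j)\neq 0$ for all $n$, and the words $x_i^nx_j$ are Lyndon, so the elements $(\ad_c v_i)^n(v_j)$ appear among the PBW generators, with one generator in each degree $n+1$. Counting only the monomials with exponents $0$ or $1$ in distinct generators already gives at least the number of partitions of $n$ into distinct parts. This grows like $e^{c\sqrt n}$ and so is not polynomially bounded, hence $\GK\toba(V)=\infty$. The same argument applies to every reflection $R_i(V)$, because \cite{hs-book}*{Theorem 12.4.5} identifies $\toba(V)$ and $\toba(R_i(V))$ up to a twist that preserves $\GK$ (Lemma~\ref{lema:pre-post} in the dual form). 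Hence, if some object in the Weyl groupoid orbit fails to be $i$-finite, we are done.

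\emph{Main obstacle.} It remains to treat the case where all reflections exist but $\Delta^V_+$ is infinite. The naive PBW count then breaks down, because the real roots may be exponentially sparse (hyperbolic-like behaviour), so infinitely many generators do not by themselves force superpolynomial growth. My first attempt would be to reduce to rank two: an infinite Weyl groupoid in rank $\theta$ contains a pair $i,j$ and an object where the rank-two restricted groupoid is infinite. In rank two I would use Heckenberger's classification of finite rank-two Weyl groupoids to list the generalized Dynkin diagrams that are not in the list. For these, the plan is to exhibit explicitly either a root vector of infinite height together with an infinite linear-growth family of real roots, in the manner of affine types, or a subalgebra with exponential growth. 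Either one gives $\GK=\infty$. Cases that do not reduce to rank two (the exceptional ones where every rank-two subdiagram is finite) would be handled in rank three by the same case analysis over Heckenberger's list. I expect this family-by-family verification to be by far the longest and most delicate step.
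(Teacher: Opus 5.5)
The paper does not prove this theorem; it quotes it from \cite{angiono-garciai-finiteGK}, where it settles the conjecture of \cite{AAH2}. Your first two steps are sound. With a finite root system there are finitely many PBW generators, and $\GK\toba(V)$ is the number of those of infinite height. If $V$ is not $i$-finite, the nonzero elements $(\ad_c v_i)^n(v_j)$, one in each degree $n+1$, force growth at least like partitions into distinct parts. One justification is misattributed: invariance of $\GK$ under $V\mapsto R_i(V)$ is true, but it comes from the reflection theory of \cite{hs-book}, via $\toba(V)\simeq K\#\toba(V_i)$ and the corresponding decomposition of $\toba(R_i(V))$. Lemma~\ref{lema:pre-post} concerns graded duals of pre-Nichols algebras and plays no role. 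The real gap is in the remaining step, which is the entire content of the theorem.

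\emph{The reduction to rank two or three is false.} An infinite Weyl groupoid need not have, at any object, an infinite restriction of rank two or three. Take $V$ of Cartan type whose Dynkin diagram is a cycle with $\theta\ge 4$ vertices, with $q_{ii}=q$ not a root of unity, $q_{ij}q_{ji}=q^{-1}$ for adjacent $i,j$, and $q_{ij}q_{ji}=1$ otherwise. Every object of its Weyl groupoid carries the affine Cartan matrix of type $A^{(1)}_{\theta-1}$, so the root system is infinite. Yet every proper restriction at every object is of finite type $A$, so no case analysis over the rank-two and rank-three lists reaches it. What is available is an induction on $\theta$, using $\toba(V_I)\hookrightarrow\toba(V)$ for $I\subsetneq\I_\theta$ and reflection invariance. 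This reduces to \emph{minimal} infinite configurations, but these occur in every rank (affine-type families) as well as sporadically, and they need both a description and a uniform argument.

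\emph{The sparse case has no mechanism.} Even for a single minimal configuration, your dichotomy gives no argument in the sparse case you flagged yourself. Take rank two, Cartan type, $a_{12}a_{21}\ge 5$, and $q$ a root of unity of large order. Every real root vector has finite height, and there are only $O(\log n)$ real roots of degree $\le n$. So real roots contribute only polynomially many PBW monomials: there is neither a linear family of real roots nor a real root vector of infinite height. ``A subalgebra with exponential growth'' is exactly the missing idea, and you need a concrete way to produce it. One candidate is the splitting $\toba(V)\simeq K\#\toba(V_i)$, where $K$ is the Nichols algebra of the diagonal braided vector space $\ad_c\toba(V_i)(V_{\ne i})$, which has larger dimension. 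Another is control of non-real PBW degrees. Without such a mechanism, the implication $\GK\toba(V)<\infty\Rightarrow$ finite root system remains unproved.
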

 
 We are ready to answer affirmatively to the Conjecture \ref{conj:nichols-noeth-gral} when $V$ is of diagonal type and $R = \toba(V)$.
 
 \begin{cor}\label{cor:noeth-implies-finiteGK}
 Assume that $V$ is a braided vector space of diagonal type. The following statements
 are equivalent: 
 \begin{enumerate}[leftmargin=*,label=\rm{(\roman*)}]
 \item\label{item:toba-noeth} $\toba(V)$ is Noetherian.
 \item\label{item:toba-finiteGK} $\toba(V)$ has finite $\GK$.
 \item\label{item:toba-finite-root-sys} The root system of $\toba(V)$ is finite.
 \end{enumerate}
 \end{cor}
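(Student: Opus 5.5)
The plan is to prove the cycle of implications \ref{item:toba-noeth} $\Rightarrow$ \ref{item:toba-finite-root-sys} $\Rightarrow$ \ref{item:toba-finiteGK} $\Rightarrow$ \ref{item:toba-noeth}. We first realize $(V,c^{\bq})$ as an object of $\yd{J}$ for a suitable Hopf algebra $J$, for instance the group algebra of $\mathbb Z^\theta$. The generator $g_i$ acts on $v_j$ by $q_{ij}$ and $v_i$ has degree $g_i$. With this choice $V=\bigoplus_i \ku v_i$ is a direct sum of one-dimensional simple Yetter--Drinfeld modules, so the setting of Theorem~\ref{th:noeth-implies-finiteRS} applies with $V_i=\ku v_i$. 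The root system of $\toba(V)$ in the sense of \cite{hs-book}*{Chapter 13} then coincides with the usual (generalized) root system of the braiding matrix $\bq$. This agreement is needed so that Theorem~\ref{thm:finiteGK} and Theorem~\ref{th:noeth-implies-finiteRS} speak about the same object.

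\ref{item:toba-noeth} $\Rightarrow$ \ref{item:toba-finite-root-sys} is exactly Theorem~\ref{th:noeth-implies-finiteRS}.

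\ref{item:toba-finite-root-sys} $\Rightarrow$ \ref{item:toba-finiteGK} is one direction of Theorem~\ref{thm:finiteGK}; conversely \ref{item:toba-finiteGK} $\Rightarrow$ \ref{item:toba-finite-root-sys} is the other direction, so it suffices to show \ref{item:toba-finite-root-sys} $\Rightarrow$ \ref{item:toba-noeth}.

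For \ref{item:toba-finite-root-sys} $\Rightarrow$ \ref{item:toba-noeth} we use the PBW basis given by Theorem~\ref{thm:finiteGK}. Since the root system is finite, $\toba(V)$ has a restricted PBW basis in finitely many root vectors $x_{\beta_1},\dots,x_{\beta_N}$, ordered convexly. Each monomial $x_{\beta_1}^{n_1}\cdots x_{\beta_N}^{n_N}$ has $n_k$ bounded by the height $N_{\beta_k}$ when $\beta_k$ has finite height, and $n_k$ arbitrary otherwise. By the Levendorskii--Soibelman type commutation rules for convex orders, we filter $\toba(V)$ by a suitable degree on PBW monomials (for instance lexicographic with weights). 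In the associated graded algebra the root vectors $q$-commute up to nilpotent relations. Thus $\gr\toba(V)$ is a quotient of a quantum affine space (skew polynomial ring) in $N$ variables, possibly with truncations. A skew polynomial ring in finitely many variables is Noetherian, being an iterated Ore extension, and quotients of Noetherian algebras remain Noetherian. Proposition~\ref{prop:graded-Noetherian} then gives that $\toba(V)$ is Noetherian. An alternative route avoids the filtration: show that $\toba(V)$ is an iterated Ore-type extension by the root vectors, or use that the subalgebras generated by root vectors of infinite height give a finite chain of extensions.

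The main obstacle is this last step: establishing the convex-order commutation relations $x_{\beta}x_{\alpha}-q_{\beta\alpha}x_\alpha x_\beta \in$ span of monomials in root vectors strictly between $\alpha$ and $\beta$, in full generality for all finite root systems of diagonal type, including the non-Cartan/super cases. I would first try to cite the existing literature, namely Angiono's convex PBW bases and the results of \cite{angiono-garciai-finiteGK}, and derive the degree filtration from them. Failing that, I would use the Lusztig-type isomorphisms $T_i$ from the Weyl groupoid to reduce the commutation relations to rank-two cases, which are explicitly classified.
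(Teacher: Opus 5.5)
Your proposal is correct and follows the paper's proof. The paper gets (i)$\Rightarrow$(iii) from Theorem~\ref{th:noeth-implies-finiteRS} and (ii)$\Leftrightarrow$(iii) from Theorem~\ref{thm:finiteGK}, and for (iii)$\Rightarrow$(i) it only says that the argument of \cite{angiono-prenichols}*{Theorem 19} carries over, which is the De Concini--Kac type filtration on Angiono's convex PBW basis that you sketch. One small point: your filtration is indexed by a lexicographically ordered monoid rather than $\N_0$, so you need the standard well-ordered version of Proposition~\ref{prop:graded-Noetherian}, or an iterated application of it.
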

 
 \pf 
 Theorem~\ref{th:noeth-implies-finiteRS} 
 shows that \ref{item:toba-noeth} implies \ref{item:toba-finite-root-sys}. The equivalence of the last two statements
 is Theorem~\ref{thm:finiteGK}. Finally, the proof of the implication
 \ref{item:toba-finite-root-sys} $\implies$\ref{item:toba-noeth} is essentially the same as the proof of \cite{angiono-prenichols}*{Theorem 19}.
 \epf

\subsection{Pre-Nichols algebras of diagonal type}

 \medbreak Now we turn to pre-Nichols algebras of diagonal type with finite $\GK$, 
 a path to Noetherian post-Nichols algebras as suggested 
 by Lemma \ref{lemma:eminent-Noetherian}.
By Theorem \ref{thm:finiteGK}, the corresponding root systems are finite.
 Many braided vector spaces of diagonal type with finite root system
 admit an eminent pre-Nichols algebra, as a consequence of \cites{andrus-sanmarco,ACSa,ACSa2,angiono-campagnolo}, see below.

In the following theorem we will consider Dynkin diagrams of the following form: 
\begin{align}\label{eq:exceptions}
& \begin{aligned} 
&\circ & & \text{one vertex with label $q=\pm 1$, or}
\\
&\circ & &\text{Cartan type $A_{\theta}$ with $\theta \ge 2$ and label $q=-1$, or}
\\
&\circ & &\text{Cartan type $D_{\theta}$ with $\theta \ge 4$ and label $q=-1$.}
\end{aligned}
\end{align}

\begin{theorem}\label{thm:eminent} \cite{angiono-campagnolo}*{Theorem 1.1; see also Remark 4.9}.
Let $V$ be a braided vector space of diagonal type such that $\dim \toba(V)< \infty$.
Assume that none of the connected components of the Dynkin diagram of $V$ 
is as in ~\eqref{eq:exceptions}. 
% are  
% \begin{align}\label{eq:exceptions}
% & \begin{aligned} 
% &\circ & & \text{neither one-dimensional with label $q=\pm 1$,}
% \\
% &\circ & &\text{nor of Cartan type $A_{\theta}$ with $\theta \ge 2$ and label $q=-1$,}
% \\
% &\circ & &\text{nor of Cartan type $D_{\theta}$ with $\theta \ge 4$ and label $q=-1$.}
% \end{aligned}
% \end{align}
Then $V$ admits an eminent pre-Nichols algebra $\Ec(V)$ which  fits into an exact sequence of 
graded braided Hopf algebras 
\begin{align}\label{eq:exact-sequence-prenichols}
\hZ  \hookrightarrow \Ec(V) \twoheadrightarrow \toba(V),
\end{align} where $\hZ $ is an affine algebra of $q$-polynomials. 
The distinguished pre-Nichols algebra $\widetilde{\Bc}(V)$, introduced in \cite{angiono-prenichols}, has  $\GK \widetilde{\Bc}(V) < \infty$ and is eminent
 except for the following  Dynkin diagrams:

\begin{enumerate}[leftmargin=5ex,label=\rm{(\roman*)}]
\item\label{item:eminent-not-distinguished-A2} \cite{andrus-sanmarco}  $A_2$ with $q\in \Gb_3'$.

\item\label{item:eminent-not-distinguished-Asuper-1} \cite{ACSa}  
$\superqa{3}{q}{\{2\}}$ and $\superqa{3}{q}{\{1,2,3\}}$ with $q\in \Gb_{N}'$.

\item\label{item:eminent-not-distinguished-g(2,3)-1} \cite{ACSa2} 
$\g(2,3)$ with diagrams  $\xymatrix@C=10pt{\overset{-1}{\circ} \ar@{-}[r]^{\xi} &\overset{-1}{\circ}\ar@{-}[r]^{\xi}\ &\overset{-1}{\circ}}$ and
$\xymatrix@C=10pt{\overset{-1}{\circ} \ar@{-}[r]^{\xi^2} &\overset{\xi}{\circ}\ar@{-}[r]^{\xi}\ &\overset{-1}{\circ}},$ $\xi \in \Gb_3'$.
\end{enumerate}
\end{theorem}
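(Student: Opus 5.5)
This statement is cited from \cite{angiono-campagnolo} together with \cite{andrus-sanmarco,ACSa,ACSa2}, so the plan is to reconstruct the argument along the lines of those papers, not to give an independent proof. The approach is to construct $\Ec(V)$ explicitly from the distinguished pre-Nichols algebra $\widetilde{\Bc}(V)$ of \cite{angiono-prenichols}. Recall that $\widetilde{\Bc}(V)$ is the quotient of the tensor algebra $T(V)$ by all defining relations of $\toba(V)$, from a minimal presentation, except the powers $x_\beta^{N_\beta}$ of the root vectors attached to Cartan roots $\beta$.

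First I would show that the images $z_\beta$ of these powers in $\widetilde{\Bc}(V)$ generate a $q$-polynomial subalgebra $Z$. This subalgebra is a normal Hopf subalgebra in the braided sense, and $\widetilde{\Bc}(V)/\langle Z\rangle\simeq\toba(V)$. Both facts would come from the PBW basis given by the finite root system, Theorem \ref{thm:finiteGK}, using the Lusztig-type isomorphisms to move root vectors around. This produces the exact sequence \eqref{eq:exact-sequence-prenichols} for $\widetilde{\Bc}(V)$. Finiteness of $\GK$ then follows because $\toba(V)$ is finite-dimensional and $Z$ is an affine algebra of $q$-polynomials. In the exceptional diagrams \ref{item:eminent-not-distinguished-A2}--\ref{item:eminent-not-distinguished-g(2,3)-1}, I would instead take $\Ec(V)$ to be a suitable quotient of $\widetilde{\Bc}(V)$ by the extra relations found in the cited papers, and $\hZ$ the corresponding smaller $q$-polynomial algebra.

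Second, to prove eminence, I would take any $\Bc\in\pref(V)$. Since $\Bc\twoheadrightarrow\toba(V)$, every defining relation of $\toba(V)$ is either primitive in some quotient $T(V)/I$ or becomes primitive modulo earlier relations. If such a relation were nonzero in $\Bc$, it would generate a free subalgebra, or a polynomial algebra in infinitely many variables obtained by braided adjoint actions, and this would force $\GK\Bc=\infty$. The exception is the case where the relation is a power $x_\beta^{N_\beta}$ of a Cartan root vector, which is exactly the kind of element whose survival keeps $\GK$ finite. So every non-Cartan relation vanishes in $\Bc$, and this gives $\Ec(V)\twoheadrightarrow\Bc$. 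To carry this out I would first reduce to rank two by using the Weyl groupoid, that is, by transporting the argument along reflections, and then verify rank two diagrams case by case. I would also use the facts, established in \cite{AAH1,angiono-garciai-finiteGK}, that certain primitive elements in braided Hopf algebras of finite $\GK$ must vanish.

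The main obstacle is exactly this vanishing step: showing that a nonzero non-Cartan relation forces infinite $\GK$. The difficulty is sharpest for the excluded diagrams \eqref{eq:exceptions}. For the one-vertex case with $q=\pm1$, and for types $A_\theta$ and $D_\theta$ with $q=-1$, the relations $x_{ij}^2$ can survive in finite-$\GK$ pre-Nichols algebras in incomparable ways, so eminence fails there. For the listed exceptions to distinguishedness, extra cubic or mixed relations must additionally be killed, which needs the explicit computations of \cite{andrus-sanmarco,ACSa,ACSa2}.
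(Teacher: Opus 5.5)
You are on the right track for the distinguished part, but the eminence half has a genuine gap: your treatment of (i)--(iii) runs in the wrong direction, and the principle behind it is false. The paper itself gives no proof; it imports the statement from \cite{angiono-campagnolo} and \cite{andrus-sanmarco,ACSa,ACSa2}, so your sketch has to stand on its own.

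Your account of $\widetilde\Bc(V)$ matches \cite{angiono-prenichols}: the PBW basis, the $q$-polynomial subalgebra $Z$ generated by the $x_\beta^{N_\beta}$ with $\beta$ a Cartan root, the exact sequence, and $\GK\widetilde\Bc(V)=\GK Z<\infty$. One detail is missing: the quantum Serre relations at Cartan vertices that are redundant in the minimal presentation of $\toba(V)$ must be kept.

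The treatment of (i)--(iii) is backwards. There too $\GK\widetilde\Bc(V)<\infty$, so an eminent $\Ec(V)$ must satisfy $\Ec(V)\le\widetilde\Bc(V)$, i.e.\ it must surject onto $\widetilde\Bc(V)$. Suppose $\Ec(V)$ were a quotient of $\widetilde\Bc(V)$, as you propose. Then the composite $\widetilde\Bc(V)\twoheadrightarrow\Ec(V)\twoheadrightarrow\widetilde\Bc(V)$ is a surjective graded endomorphism of a locally finite graded space, hence bijective. Both maps would then be isomorphisms, and $\widetilde\Bc(V)$ would itself be eminent. So in (i)--(iii), $\Ec(V)$ is a proper covering of $\widetilde\Bc(V)$: some defining relations of $\widetilde\Bc(V)$ are dropped, and $\hZ$ is larger than $Z$, not smaller.

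This also refutes the principle behind your eminence step, namely that a nonzero relation other than a power of a Cartan root vector forces $\GK\Bc=\infty$. Take $A_2$ with $q\in\Gb_3'$, where $\widetilde\Bc(V)=T(V)/(x_{112},x_{221})$. The element $x_{112}=(\ad_c x_1)^2x_2$ is primitive in $T(V)$ and has degree $\gamma=2\alpha_1+\alpha_2$. It satisfies $q_{\gamma\gamma}=q^3=1$ and $q_{\gamma\alpha_i}q_{\alpha_i\gamma}=1$ for $i=1,2$. Hence $\toba(V\oplus\ku x_{112})\simeq\toba(V)\otimes\ku[x_{112}]$ has finite $\GK$. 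The argument therefore cannot kill $x_{112}$, and this is exactly why (i) is an exception.

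What works is the following. Suppose a relation $r$ is primitive modulo the relations already known to hold in $\Bc$, and $r\neq0$. Then $W=V\oplus\ku r$ is of diagonal type with $\GK\toba(W)\le\GK\Bc$. One must then check, relation by relation, whether $W$ (or a small-rank subspace containing $r$) has an infinite root system, using Heckenberger's classification \cite{heckenberger-adv} and Theorem~\ref{thm:finiteGK}. Your justification via ``a free subalgebra or infinitely many polynomial variables'' does not provide this. Relations for which the root system stays finite cannot be forced this way, and that is where the coverings in (i)--(iii) come from.

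Two smaller points:
\begin{itemize}
\item Reflections are not available for an arbitrary $\Bc\in\pref(V)$. The reduction to small rank should instead use that the subalgebra of $\Bc$ generated by some of the $x_i$ is a finite-$\GK$ pre-Nichols algebra of the corresponding subdiagram.
\item Your claim that eminence fails for the diagrams in \eqref{eq:exceptions} is stronger than what is known; the paper records this as open.
\end{itemize}
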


Above, $\Gb_{N}'$ is the set of primitive $N$-th roots of 1.
The braided vector spaces in \ref{item:eminent-not-distinguished-A2}, \ref{item:eminent-not-distinguished-Asuper-1} and \ref{item:eminent-not-distinguished-g(2,3)-1}
have an eminent pre-Nichols algebra $\Ec(V)$, see \cite{angiono-campagnolo}. 
When  the Dynkin diagram of $V$  is as in \eqref{eq:exceptions},
it is not known whether eminent pre-Nichols algebras exist.  
See  \cite{angiono-campagnolo}*{Remarks 4.9 \& 4.10}.

Now we turn to infinite dimensional Nichols algebras with finite $\GK$.

\begin{theorem}\cite{campagnolo}\label{thm:campagnolo}
Let $V$ be a finite-dimensional braided vector space of diagonal type with
connected Dynkin diagram such that $\dim \toba(V) = \infty$ 
but $\GK \toba(V) < \infty$. Then, either $\toba(V)$ is eminent, or the set of pre-Nichols algebras with finite $\GK$ has exactly two elements, namely $\toba(V)$ and $\Ec(V)$,
which occurs in the following cases:

\begin{enumerate}[leftmargin=5ex,label=\rm{(\roman*)}] \setcounter{enumi}{3}
\item\label{item:eminent-not-distinguished-Asuper-1-gen}   
$\superqa{3}{q}{\{2\}}$ and $\superqa{3}{q}{\{1,2,3\}}$ 
with $q\in \ku \backslash\Gb_{\infty}$.
 
\item\label{item:eminent-not-distinguished-superDa-1-gen}   
$\superda{\alpha}$ with diagrams 
\begin{align*}
&\xymatrix@C=20pt{\overset{q}{\circ} \ar@{-}[r]^{q^{-1}} &\overset{-1}{\circ}\ar@{-}[r]^{r^{-1}}\ &\overset{r}{\circ}}
&M&<\infty, \quad N, L=\infty;&
\\
&\xymatrix@C=20pt{\overset{q}{\circ} \ar@{-}[r]^{q^{-1}} &\overset{-1}{\circ}\ar@{-}[r]^{r^{-1}}\ &\overset{r}{\circ}}
&L&<\infty, \quad M,N=\infty;
\\
&\xymatrix@C=20pt@R=15pt{
&\overset{-1}{\underset{1}{\circ}}\ar@{-}[ld]_{q}\ar@{-}[rd]^{r} & 
\\
\overset{-1}{\underset{2}{\circ}} \ar@{-}[rr]^{s} & &\overset{-1}{\underset{3}{\circ}}} &M&<\infty,\quad N,L=\infty.
\end{align*}
\end{enumerate}
Here $q,r,s \in \ku^{\times}$ satisfy $qrs=1$ and $M \coloneqq \ord q$, 
$N \coloneqq \ord r$, $L \coloneqq  \ord s$. 
Moreover,  $\Ec(V)$  fits into an exact sequence of graded  Hopf algebras like
\eqref{eq:exact-sequence-prenichols}.
\end{theorem}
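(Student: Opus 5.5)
The plan is to work entirely inside the classification. By Theorem~\ref{thm:finiteGK} together with \cite{heckenberger-adv} and the list in \cite{AAH2}, the connected diagrams with $\dim\toba(V)=\infty$ and $\GK\toba(V)<\infty$ are known explicitly, and each comes with a finite root system, a PBW basis and a minimal presentation of $\toba(V)$ (Angiono). So I would fix $\Bc\in\pref(V)$ with $\Bc\neq\toba(V)$ and look at the kernel $I$ of the projection $\Bc\twoheadrightarrow\toba(V)$. This $I$ is a graded Hopf ideal in $\yd{\ku\Gamma}$. Its homogeneous elements of minimal degree are primitive in $\Bc$, and they map to defining relations of $\toba(V)$ that fail in $\Bc$.

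The relations in a minimal presentation fall into three kinds. First come quantum Serre-type relations $(\ad_c x_i)^{1-a_{ij}}(x_j)$. Second come the extra relations of non-Cartan type, such as those at vertices labelled $-1$ or relations in three letters. Third come powers $x_\beta^{N_\beta}$ of root vectors. In the infinite-dimensional case only finitely many roots $\beta$ have finite $N_\beta$, and the remaining roots already generate polynomial-type parts.

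The first main step is to show that if $\Bc$ omits any relation of the first two kinds, then $\GK\Bc=\infty$. I would reduce this to rank two (or rank three, for the relations in three letters) in two moves:

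\begin{itemize}
\item the subalgebra generated by the letters involved in the omitted relation is a pre-Nichols algebra of a subdiagram, and it has finite $\GK$;
\item the Lusztig-type isomorphisms along the Weyl groupoid carry the defect to a simple root position.
\end{itemize}

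There I would compare with the known rank-two classification of finite-$\GK$ pre-Nichols algebras. Alternatively, I would exhibit directly a free subalgebra on two elements, or an infinite family of algebraically independent iterated brackets, which gives infinite growth.

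The second step treats power relations $x_\beta^{N_\beta}$ with $N_\beta<\infty$. When $\beta$ is a Cartan root of the relevant reflection, I would show, as in \cite{angiono-prenichols}, that $x_\beta^{N_\beta}$ can be omitted while keeping $\GK$ finite. Conversely, I would show that $\toba(V)$ fails to be eminent exactly when such an omission is possible. Going through the list, the roots with finite $N_\beta$ occur exactly in cases \ref{item:eminent-not-distinguished-Asuper-1-gen} and \ref{item:eminent-not-distinguished-superDa-1-gen}, and in each of them there is exactly one such orbit of roots. Omitting all of its powers defines $\Ec(V)$. The powers $x_\beta^{N_\beta}$ then span an ideal generated by $q$-commuting (skew-)primitive elements. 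These generate the algebra $\hZ$ of $q$-polynomials, which gives the exact sequence \eqref{eq:exact-sequence-prenichols}, and the resulting PBW basis gives $\GK\Ec(V)=\GK\toba(V)+\GK\hZ<\infty$.

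Finally, any $\Bc\in\pref(V)$ satisfies all relations of the first two kinds, so it is a quotient of $\Ec(V)$. One then checks that a proper intermediate quotient $\Ec(V)\twoheadrightarrow\Bc\twoheadrightarrow\toba(V)$ is impossible, because the Hopf ideal generated by one of the powers $x_\beta^{N_\beta}$ already contains all of them, by the action of the root vectors. Hence $\pref(V)=\{\Ec(V),\toba(V)\}$.

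I expect the main obstacle to be the first step: proving, case by case over the list, that omitting each non-power relation forces infinite $\GK$. The relations in three letters, especially for $\superda{\alpha}$, do not reduce cleanly to rank two and would need an ad hoc growth argument.
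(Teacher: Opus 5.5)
The paper gives no proof of this statement; it is quoted from \cite{campagnolo}. Your outline, however, has a genuine error in its central dichotomy. Step~1 asserts that dropping any relation other than a power $x_\beta^{N_\beta}$ forces $\GK\Bc=\infty$. Step~2 asserts that in the exceptional cases $\Ec(V)$ is obtained by dropping the powers attached to one orbit of Cartan roots. Case~(iv) contradicts both. There $q\notin\Gb_\infty$, so the only roots with $N_\beta<\infty$ are the odd roots, with $q_\beta=-1$. These are not Cartan, and $x_\beta^2=0$ cannot be dropped: a nonzero $x_2^2$ in $\superqa{3}{q}{\{2\}}$ would be a primitive point labelled $1$ joined to $x_1$ by $q^{\mp 2}\neq 1$. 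What makes $\toba(V)$ non-eminent is a non-power relation whose degree $\gamma$ is braided-trivial, i.e.\ $q_{\gamma\gamma}=1$ and $q_{\gamma\alpha_i}q_{\alpha_i\gamma}=1$ for all $i$. For $\superqa{3}{q}{\{1,2,3\}}$ (all vertices labelled $-1$, mutually inverse edges $q^{\pm1}$, vertices $1$ and $3$ not joined) one gets $\gamma=\alpha_1+\alpha_3$. That is the quantum Serre relation $[x_1,x_3]_c=0$, which is of your first kind. For $\superqa{3}{q}{\{2\}}$ it is the three-letter relation in degree $\alpha_1+2\alpha_2+\alpha_3$. My understanding is that $\Ec(V)$ omits exactly this relation, with $\hZ=\ku[z]$ for $z$ its image, and the direct check is that $c(z\otimes z)=z\otimes z$ and $c^2(z\otimes v)=z\otimes v$ (the properties used in the proof of Corollary~\ref{cor:eminent}). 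So the claim you flag as the main obstacle is not merely hard but false. Your last step (every $\Bc\in\pref(V)$ satisfies the relations of the first two kinds, hence is a quotient of $\Ec(V)$) therefore rests on the wrong list. Moreover, roots with finite $N_\beta$ occur throughout the classification (odd roots, vertices labelled by roots of unity of fixed order), not only in (iv)--(v). For the non-Cartan ones you would still have to prove that $x_\beta^{N_\beta}=0$ holds in every $\Bc\in\pref(V)$.

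The missing tool, as I understand the technique of \cites{andrus-sanmarco,ACSa,angiono-campagnolo,campagnolo}, replaces both of your proposed reductions. The Lusztig-isomorphism reduction is unavailable: for an arbitrary $\Bc$ there is no reason its defining ideal is stable under reflections. The appeal to a rank-two classification of $\pref$ is circular, since that classification is part of what is being proved. The tool is this: if $x\in\Pc(\Bc)$ is homogeneous of a non-simple degree $\gamma$, then $V\oplus\ku x\subseteq\Pc(\Bc)\simeq(\gr_{\corad}\Bc)^1$. Hence $\toba(V\oplus\ku x)\hookrightarrow\gr_{\corad}\Bc$ and $\GK\toba(V\oplus\ku x)\le\GK\Bc$ by Proposition~\ref{prop:kl-graded-filtered}. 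Apply this to your minimal-degree primitives in $\ker(\Bc\twoheadrightarrow\toba(V))$, and then relation by relation along a presentation arranged so that each relation is primitive modulo the previous ones. Theorem~\ref{thm:finiteGK} together with Heckenberger's list \cite{heckenberger-adv} then decides whether the rank-$(\theta+1)$ diagonal braiding on $V\oplus\ku x$ is admissible. This handles three-letter relations with no ad hoc growth estimates and detects precisely the braided-trivial degrees above. Finally, the two-element conclusion should come from $\hZ$ having a single generator $z$. A nonzero Hopf ideal of $\Ec(V)$ inside $z\Ec(V)$ has primitive elements of minimal degree, and one checks these lie in $\ku z$. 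Your argument that the Hopf ideal generated by one power already contains all the others is false in general. In the example of $U_q^+(\mathfrak{sl}_3)$ in Section~\ref{sec:core}, $x_1^3,x_{12}^3,x_2^3$ are central and part of a PBW basis, so $x_{12}^3\notin x_1^3\,U_q^+(\mathfrak{sl}_3)$. In case (v) the claim is only vacuously true, because exactly one Cartan root has finite order there.
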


The defining relations, PBW basis and $\GK$ of the eminent algebras in the previous
theorem are given in \cite{campagnolo}*{\S4.2, \S4.3}.

See \cite{angiono-campagnolo}*{Theorem~4.8} for $V$ with
disconnected Dynkin diagram.

\subsection{Post-Nichols algebras of diagonal type}
 The post-Nichols algebra $\Lc(V) \coloneqq \widetilde{\Bc}(V^*)^d$
 is called the Lusztig algebra of $V$. 
 We know that $\Lc(V)$ is Noetherian if $\dim\toba(V)<\infty$ \cite{aarossi-mrl}*{Proposition 4.14}.
 This result can be generalized as follows.

 \begin{lemma}\label{prop:gener-Lusztig-Noetherian} 
 Let $V$ be a braided vector space of diagonal type such that $\GK\toba(V)< \infty$.
If
the connected components of the Dynkin diagram of $V$ satisfy the restrictions in
 Theorem~\ref{thm:eminent}, i.e., they are not as in \eqref{eq:exceptions},
 then the graded dual $\Ec(V)^d$  of the eminent pre-Nichols algebra  is Noetherian.
 \end{lemma}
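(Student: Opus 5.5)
Since $\GK \Ec(V) < \infty$ and $\Ec(V)$ has a PBW basis, the plan is to show that the graded dual $\Ec(V)^d$ also has a PBW basis and a filtration whose associated graded algebra is a $q$-polynomial algebra in finitely many variables, possibly extended by a finite-dimensional piece. Noetherianity then follows from Proposition~\ref{prop:graded-Noetherian}. This is modelled on the proof of \cite{aarossi-mrl}*{Proposition 4.14} for Lusztig algebras.

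\textbf{Step 1 (reduction).} First reduce to a connected Dynkin diagram. For disconnected diagrams, $\Ec(V)$ is a braided tensor product of the eminent pre-Nichols algebras of the components \cite{angiono-campagnolo}*{Theorem 4.8}. Its graded dual is then the braided tensor product of the duals, which is a twisted tensor product with diagonal braiding. A twisted tensor product of Noetherian algebras graded by characters is still Noetherian, for instance via an iterated Ore extension or filtration argument.

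\textbf{Step 2 (structure of $\Ec(V)$).} For connected $V$ not as in \eqref{eq:exceptions}, use the structure from Theorem~\ref{thm:eminent} and Theorem~\ref{thm:campagnolo}. There are two cases.
\begin{enumerate}
\item If $\dim\toba(V)<\infty$, there is the exact sequence \eqref{eq:exact-sequence-prenichols}, $\hZ\hookrightarrow \Ec(V)\twoheadrightarrow\toba(V)$, with $\hZ$ a $q$-polynomial algebra.
\item If $\dim\toba(V)=\infty$, either $\Ec(V)=\toba(V)$, whose dual is Noetherian by Corollary~\ref{cor:noeth-implies-finiteGK} and the PBW argument of \cite{angiono-prenichols}*{Theorem 19}, or $\Ec(V)$ again fits into \eqref{eq:exact-sequence-prenichols}.
\end{enumerate}
In both cases $\Ec(V)$ has a PBW basis in root vectors $x_{\beta}$, $\beta$ in the finite set of positive roots. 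The heights are finite for non-Cartan roots in $\toba(V)$, and infinite for the roots whose powers $x_\beta^{N_\beta}$ generate $\hZ$.

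\textbf{Step 3 (dual PBW basis).} Dualize the exact sequence to get $\toba(V^*)\cong\toba(V)^d\hookrightarrow \Ec(V)^d\twoheadrightarrow \hZ^d$, again exact in the braided sense. Here $\hZ^d$ is a divided-power analogue of a $q$-polynomial algebra. Since the $q$-factors coming from the braiding on the generators of $\hZ$ are roots of unity of suitable type, or generic, $\hZ^d$ is again a $q$-polynomial algebra up to a finite-dimensional correction. Using the dual PBW basis, as for Lusztig's divided powers, one writes $\Ec(V)^d$ as spanned by ordered monomials $y_{\beta_1}^{(a_1)}\cdots y_{\beta_M}^{(a_M)}$. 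Then choose the filtration given by a degree ordering on PBW monomials, as in \cite{angiono-prenichols}, or De Concini--Kac. The associated graded algebra is a quotient of a $q$-skew polynomial algebra in the finitely many generators $y_\beta$ and $y_\beta^{(N_\beta)}$, hence Noetherian. Alternatively, one can show $\Ec(V)^d$ is finitely generated as a module over a central-like $q$-polynomial subalgebra dual to $\hZ$, and conclude from Noetherianity of $q$-polynomial rings.

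\textbf{Main obstacle.} The hardest step is the last one: controlling the commutation relations between dual root vectors, in particular divided powers, enough to see that the associated graded algebra is $q$-commutative. For Lusztig algebras this used the explicit Levendorskii--Soibelman type convexity of PBW bases. I would first try to prove a convexity property for the dual PBW basis of $\Ec(V)^d$ directly from that of $\Ec(V)$ by duality of the ordered monomial bases. Failing that, I would check the finitely many exceptional families (items \ref{item:eminent-not-distinguished-A2}--\ref{item:eminent-not-distinguished-superDa-1-gen}) using the explicit presentations in \cite{campagnolo}*{\S4.2, \S4.3} and \cite{ACSa,ACSa2}.
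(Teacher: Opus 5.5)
Your outline is the same as the paper's own sketch: adapt \cite{aarossi-mrl}*{Proposition 4.14} through a PBW-type filtration of the dual. It has a genuine gap at the same point. The only substantive step is that some filtration of $\Ec(V)^d$ has a $q$-commutative, or otherwise Noetherian, associated graded algebra. You assert this but do not prove it, as you concede under ``Main obstacle''; the paper likewise states that its proof of this lemma is not complete.

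Moreover, the structure you feed into that step is wrong where it matters. Your Step 2 description of $\Ec(V)$ is a PBW basis in root vectors of $V$, with $\hZ$ generated by powers $x_\beta^{N_\beta}$. This fails in the exceptional cases. In case (i), $\wtoba(V)\simeq U_q^+(\mathfrak{sl}_3)$ is presented by the two quantum Serre relations, and $\Ec(V)\twoheadrightarrow\wtoba(V)$ is not injective. Hence $(\ad_c x_1)^2(x_2)$ or $(\ad_c x_2)^2(x_1)$ is a nonzero primitive element of $\Ec(V)$ that is killed in $\toba(V)$. Its degree, $(2,1)$ or $(1,2)$, is not a root, so an extra PBW generator is needed.

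Your Step 3 claim that $\hZ^d$ is ``$q$-polynomial up to a finite-dimensional correction'' also fails. In general the generators of $\hZ$ are not all primitive; compare $\Delta(x_{12}^3)$ in the example of Section~\ref{sec:core}. So $\hZ^d$ is of enveloping-algebra (Heisenberg-like) type, and the dual root vectors satisfy commutation relations with lower-order terms. Those terms are exactly what would have to be controlled, and you do not control them.

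In Step 1, tensor products of Noetherian algebras, twisted or not, need not be Noetherian. That reduction therefore also needs explicit filtrations on the factors.

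Two parts are fine. Your treatment of the subcase $\Ec(V)=\toba(V)$, via $\toba(V)^d\simeq\toba(V^*)$ and Corollary~\ref{cor:noeth-implies-finiteGK}, is correct. When $\dim\toba(V)<\infty$ and $\Ec(V)=\wtoba(V)$, one has $\Ec(V)^d=\Lc(V^*)$, so you can simply cite \cite{aarossi-mrl}*{Proposition 4.14}, as the paper does.

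The paper does not complete the sketch either. It bypasses the lemma with the core, and that machinery closes your gap without any computation with dual root vectors. The argument runs as follows.
\begin{enumerate}
\item By Corollary~\ref{cor:eminent}, $\Ec(V)$ has the finite-dimensional core $C=V\oplus W$, where $W$ is spanned by the $q$-polynomial generators of $\hZ$, and $\toba(C)$ has a finite root system.
\item By Lemma~\ref{lem:core_dual}, applied to the right graded dual, the dual of $C$ is a core of $\Ec(V)^d$. It has the same generalized Dynkin diagram, hence a finite root system.
\item Theorem~\ref{thm:core_noetherian} then shows that $\Ec(V)^d$ is Noetherian. Alternately applying $\gr_{\corad}$ and $\gr^{\ker\varepsilon}$ degenerates $\Ec(V)^d$ to the Nichols algebra of its core (Theorem~\ref{thm:grdegorder}). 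That Nichols algebra is Noetherian by Corollary~\ref{cor:noeth-implies-finiteGK}, and Noetherianity lifts back through each step as in Proposition~\ref{pro:semisimple}.
\end{enumerate}
This uses only the graded dimensions and the braiding of the generators of $\hZ$, which are checked case by case in Corollary~\ref{cor:eminent}. The Levendorskii--Soibelman-type control of commutation relations that your Step 3 lacks is never needed.
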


  \noindent
  \emph{Sketch of the proof.} If $\dim \toba(V)< \infty$ and $\Ec(V) \simeq \wtoba(V)$, then this follows from \cite{aarossi-mrl}*{Proposition 4.14}. If $\dim \toba(V) =  \infty$,
   $\GK\toba(V)< \infty$ and $\Ec(V) \simeq \toba(V)$, 
   repeat the proof of \cite{aarossi-mrl}*{Proposition 4.14}.
  
  For the remaining  cases   \ref{item:eminent-not-distinguished-A2}, \dots, 
  \ref{item:eminent-not-distinguished-superDa-1-gen},
  when $\wtoba(B)$ (or $\toba(V)$) is not eminent, the proof of 
  \cite{aarossi-mrl}*{Proposition 4.14} could be adapted. 
  \qed

\medbreak
We leave to the reader the following question, interesting in its own right:

\begin{problem} Let $V$ be as in
\ref{item:eminent-not-distinguished-A2}, \dots, 
\ref{item:eminent-not-distinguished-superDa-1-gen}.
Give a presentation of the graded dual $\Ec(V)^d$ of the eminent pre-Nichols algebra of $V$;
can one conclude that it is Noetherian?
\end{problem}

\begin{theorem}\label{thm:Noetherian-finiteGK-diagtype}
Let $H$ be a pointed affine Hopf algebra with $\GK H < \infty$.
As usual, decompose $\gr H \simeq R \# \ku \Gamma$. Assume that

\begin{itemize}  [leftmargin=*]\renewcommand{\labelitemi}{$\circ$}
	\item $R$ is affine,
	
	\item $V \coloneqq R^1$ is of diagonal type, and
	
\item the connected components of the Dynkin diagram of $V$ satisfy the restrictions in Theorem \ref{thm:eminent},  i.e.,  are not as in \eqref{eq:exceptions}.
\end{itemize}
Then $H$ is Noetherian.
\end{theorem}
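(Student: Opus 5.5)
The plan is to reduce to Lemma~\ref{lem:Noetherian-finiteGK}. That lemma asks that $H$ be pointed affine with $\GK H<\infty$ and that the diagram $R$ be affine, which are exactly our hypotheses. It then remains to check that \ref{item:GK-R-finita} implies \ref{item:R-Noetherian} for this particular $R$, i.e.\ that $R$ is left Noetherian. For that I would use Lemma~\ref{lemma:eminent-Noetherian}, applied to $V=R^1$ over $J=\ku\Gamma$.

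First I record the finiteness data. By Proposition~\ref{prop:kl-graded-filtered}, $\GK \gr H\le \GK H<\infty$. Since $R$ is affine, Theorem~\ref{thm:zhuang} gives $\GK R+\GK\ku\Gamma=\GK H<\infty$, so $\GK R<\infty$. Because $R$ is affine and generated in finitely many degrees, $R^1=V$ is finite-dimensional. Also $\toba(V)\subseteq R$ is a right coideal subalgebra, so by Proposition~\ref{pro:braided_free} $R$ is free over $\toba(V)$. I would then deduce $\GK\toba(V)\le\GK R<\infty$; a subalgebra suffices for GK dimension anyway. By Theorem~\ref{thm:finiteGK}, the root system of $V$ is finite.

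Next I would pass to the dual. $V^*$ is again of diagonal type, with braiding matrix given by the transpose or inverse, depending on conventions. Its Dynkin diagram has the same shape and labels inverted, so $V^*$ also avoids \eqref{eq:exceptions}. Moreover $\toba(V^*)$ has a finite root system and finite $\GK$. By Theorem~\ref{thm:eminent}, in the finite-dimensional case, or Theorem~\ref{thm:campagnolo} together with \cite{angiono-campagnolo}*{Theorem~4.8} for disconnected diagrams in the infinite-dimensional case, $V^*$ admits an eminent pre-Nichols algebra $\Ec(V^*)$. By Lemma~\ref{prop:gener-Lusztig-Noetherian}, applied to $V^*$, its graded dual $\Ec(V^*)^d$ is Noetherian. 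Lemma~\ref{lemma:eminent-Noetherian} then shows that every affine post-Nichols algebra of $V$ with finite $\GK$ is left Noetherian; in particular $R$ is.

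Finally, Lemma~\ref{lem:Noetherian-finiteGK} finishes the proof. Alternatively one argues directly: $\Gamma$ is finitely generated by Proposition~\ref{prop:pointed-affine-gp-fg} and has finite growth, so it is nilpotent-by-finite by Gromov's theorem and hence polycyclic-by-finite. Then $\gr H\simeq R\#\ku\Gamma$ is Noetherian by Proposition~\ref{prop:smash-Noetherian}, and $H$ is Noetherian by Proposition~\ref{prop:graded-Noetherian}.

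The main obstacle is the middle step: checking that the hypotheses transfer from $V$ to $V^*$, and that eminence plus Noetherianity of $\Ec^d$ is available in every case covered, including disconnected and infinite-dimensional ones. This relies on Lemma~\ref{prop:gener-Lusztig-Noetherian}, which is only sketched, so the argument is only as complete as that lemma. I would first verify the diagram-invariance under duality, which I expect to be routine.
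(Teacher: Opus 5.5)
\textbf{There is a genuine gap: the step ``$\Ec(V^*)^d$ is Noetherian'' is not established.} Your skeleton is exactly the paper's first argument, the one it labels as a proof assuming Lemma~\ref{prop:gener-Lusztig-Noetherian}. That skeleton is: finiteness of $\GK R$ and of $\dim V$, passage to $V^*$, Lemma~\ref{lemma:eminent-Noetherian}, then Lemma~\ref{lem:Noetherian-finiteGK}. Your handling of $V$ versus $V^*$ is correct, and in fact more careful than the paper's wording there. But the step you flag is not merely a dependence on a sketched citation.

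The paper states that its proof of Lemma~\ref{prop:gener-Lusztig-Noetherian} is incomplete, and for that reason it does not use the lemma in its actual proof. Only the case $\dim\toba(V)<\infty$ with $\Ec\simeq\wtoba(V)$ is covered by a published result. The infinite-dimensional case with $\Ec\simeq\toba(V)$ is handled by ``repeat the proof''. For the diagrams in \ref{item:eminent-not-distinguished-A2}--\ref{item:eminent-not-distinguished-superDa-1-gen} the paper only says the argument ``could be adapted''. Finding a presentation of $\Ec(V)^d$ and deciding whether it is Noetherian is even posed there as an open problem. So your argument does not yet show that $\Ec(V^*)^d$ is Noetherian.

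\textbf{The missing idea is the core of Section~\ref{sec:core}.} It gives Noetherianity of $\Ec(V^*)^d$ without any presentation, as follows.

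First, apply Corollary~\ref{cor:eminent} to $V^*$, using your observation that the Dynkin diagram is invariant under duality. The exact sequence \eqref{eq:exact-sequence-prenichols} gives $\Ec\simeq\toba(V^*)\otimes\hZ$ as graded Yetter--Drinfeld modules. Each $q$-polynomial generator $z$ of $\hZ$ satisfies $c(z\otimes z)=z\otimes z$ and $c^2(z\otimes v)=z\otimes v$. Hence $C=V^*\oplus W$, with $W$ spanned by these generators, is a finite-dimensional core of $\Ec$, and $\toba(C)$ has finite root system.

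Second, let $C'$ be the graded dual of $C$. By Lemma~\ref{lem:core_dual}, $C'$ is a core of $\Ec^d$; it is still of diagonal type with finite root system.

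Third, Theorem~\ref{thm:core_noetherian} applies to $\Ec^d$:
\begin{itemize}
\item $\toba(C')$ is Noetherian by Corollary~\ref{cor:noeth-implies-finiteGK};
\item $\Ec^d\searrow\toba(C')$ after finitely many passages to $\gr_{\corad}$ and $\gr^{\ker\varepsilon}$, by Theorem~\ref{thm:grdegorder};
\item Noetherianity lifts back along each passage, by Proposition~\ref{pro:semisimple}.
\end{itemize}

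Once $\Ec^d$ is known to be Noetherian, Lemma~\ref{lemma:eminent-Noetherian} and Lemma~\ref{lem:Noetherian-finiteGK} finish exactly as in your plan. This route uses only the exact sequence and the braiding of the central generators of $\hZ$. It therefore treats the non-distinguished cases uniformly, which is precisely where Lemma~\ref{prop:gener-Lusztig-Noetherian} is unproved.
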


\medbreak
  \noindent
\emph{Proof assuming Lemma \ref{prop:gener-Lusztig-Noetherian}.}  We know that $\Gamma$ is nilpotent-by-finite.
Since $H$ is pointed and $R$ is affine, $\GK R < \infty$
by Theorem \ref{thm:zhuang}; \emph{a fortiori} $\GK \toba(V) < \infty$. Let $\Ec$ be the eminent pre-Nichols algebra
of $V$.  By Lemma~\ref{prop:gener-Lusztig-Noetherian}, we may apply
Lemma~\ref{lemma:eminent-Noetherian}
to conclude that $R$ is Noetherian.
Then Lemma \ref{lem:Noetherian-finiteGK} implies that $H$ is Noetherian.
\qed

\medbreak
Since our proof of Lemma \ref{prop:gener-Lusztig-Noetherian} is not complete, 
we will provide an alternative argument for Theorem \ref{thm:Noetherian-finiteGK-diagtype}
below Theorem~\ref{thm:core_noetherian}.

\section{The core and Noetherian graded  Hopf algebras}\label{sec:core}
In this section the field $\ku$ is arbitrary. 
The aim of this section is to provide a sufficient condition for the Noetherianity of connected graded braided Hopf algebras--see Theorem~\ref{thm:core_noetherian}--by means of the new notion of the \emph{core} of a connected graded braided Hopf algebra.

\medbreak
We fix a totally ordered abelian monoid (also called a tomonoid) $(\monoid,+,<)$. In particular, $a+c<b+c$ for any $a,b,c\in \monoid $ with $a<b$. We assume  that 
\begin{equation}
\label{eq:well_founded}
    \{\beta\in\monoid:\beta<\alpha\}\text{ is finite} 
\end{equation}
for any $\alpha\in\monoid$. 
\medbreak
% \begin{example*}
% If $\mathtt{G}$ is a subgroup of $\mathbb{R}$, then 
% $\monoid \coloneqq \mathtt{G}\cap\mathbb{R}_{\geq 0}$ is a totally ordered abelian monoid; 
% $\monoid$ is discrete exactly when it is discrete with the relative topology.
% In general a totally ordered abelian monoid embeds into a totally ordered abelian group, by the Grothendieck construction.
% \end{example*}

\begin{example*}
For each positive integer $\theta$, $\N_0^\theta$ is a totally ordered abelian monoid by 
letting $(n_1,\dots,n_\theta)<(m_1,\dots,m_\theta)$ if and only if
\begin{align*}
&\sum_{1\leq k\leq\theta} n_k<
\sum_{1\leq k\leq\theta} m_k\text{ or }\\
&\sum_{1\leq k\leq\theta} n_k=
\sum_{1\leq k\leq\theta} m_k\text{ and
$(n_1,\dots,n_\theta)<(m_1,\dots,m_\theta)$ lexicographically.}
\end{align*}
%such that the set $\{\beta\in\monoid:\beta<\alpha\}$ is finite for any $\alpha$. 

% If $\mathtt{G}$ is a subgroup of $\mathbb{R}$, then 
% $\monoid \coloneqq \mathtt{G}\cap\mathbb{R}_{\geq 0}$ is a totally ordered abelian monoid; 
% $\monoid$ is discrete exactly when it is discrete with the relative topology.
% In general a totally ordered abelian monoid embeds into a totally ordered abelian group, by the Grothendieck construction.
\end{example*}

\begin{remark*} By \cite{hs-book}*{Lemma~5.2.2}, the neutral element $\cero$ is the unique minimal element of $\monoid$. Hence each decomposition of some $\alpha \in \monoid$ as a sum of at least two non-zero elements consists of summands less than $\alpha$. By \emph{loc. cit.}, $\monoid$ is torsion-free, cancellative and positive.
\end{remark*}

% Note that $\monoid \times \N_{0}$ is a totally ordered monoid (e.g., lexicographically), 
% but not a discrete one.

\subsection{Hopf algebras graded by a monoid}
Let $J$ be a Hopf algebra and let $\monoid$ be the monoid as above. 
We denote by $\ydg{J}{\monoid}$ the category of 
$\monoid$-graded  \emph{locally finite} (i.e., with finite-dimensional $\monoid$-homogeneous components) objects in $\yd J$. The tensor product of 
$B=\bigoplus_{\alpha \in \monoid}B^\alpha$ and $C=\bigoplus_{\beta \in \monoid}C^\beta \in 
\ydg{J}{\monoid}$ is $\monoid$-graded by
\begin{align*}
(B \otimes C)^{\gamma} \coloneqq \bigoplus_{\alpha, \beta \in \monoid: \alpha + \beta = \gamma}B^\alpha \otimes C^\beta.
\end{align*}
According to the above remark and~\eqref{eq:well_founded},  
$B \otimes C \in \ydg{J}{\monoid}$ and $\ydg{J}{\monoid}$ becomes a tensor category, whose unit $\ku$ has degree $\cero$. In $\ydg{J}{\monoid}$, all finite-dimensional objects are rigid, but the category 
is generally not rigid.
Nevertheless, we may consider bialgebras and Hopf algebras in $\ydg{J}{\monoid}$.

\medbreak
Let  $C \in \ydg{J}{\monoid}$.
The (left, respectively right) $\monoid$-graded dual of $C$ is 
\begin{align*}
{}^{\leftdual}C &\coloneqq \bigoplus_{\alpha \in \monoid} {}^{*}(C^\alpha), &
&\text{respectively}& 
C^{\rightdual} \coloneqq \bigoplus_{\alpha \in \monoid} (C^\alpha)^{*}.
\end{align*}
These belong to $\ydg{J}{\monoid}$ but are not (left, respectively right) duals in the categorical sense.
Nevertheless, if $B$ is a bialgebra, or a Hopf algebra, in $\ydg{J}{\monoid}$,
then so are ${}^{\leftdual}B$ and $B^{\rightdual}$.

\begin{example*}
If $C\in \ydg{J}{\monoid}$, then 
$T(C)$ is  a Hopf algebra  in $\ydg{J}{\monoid}$.
Since the braiding preserves the $\monoid $-grading, so do the quantum symmetrizers.
Hence the Nichols algebra $\toba(C)$ is  a  Hopf algebra  in $\ydg{J}{\monoid}$. 
Note that  $\toba(C)^{\cero}=\ku1$ if and only if $C^{\cero}=\{0\}$, 
if and only if $T(C)^{\cero}=\ku1$.
\end{example*}

\medbreak 
Clearly, the Hopf algebras $T(C)$ and $\toba(C)$ are $(\monoid \times \N_{0})$-graded.

\medbreak
Generally, given an $(\monoid \times \N_{0})$-graded Hopf algebra $A$, we set
\begin{align*}
A^{\monoid,n} &\coloneqq\bigoplus_{\alpha \in \monoid} A^{\alpha,n} , & n &\in \N_0;&
A^{\alpha,\N_0} &\coloneqq\bigoplus_{n \in \N_{0}} A^{\alpha,n} , & \alpha &\in \monoid.
\end{align*}
Thus $A = \bigoplus_{n \in \N_{0}} A^{\monoid,n} =  \bigoplus_{\alpha \in \monoid} A^{\alpha,\N_{0}}$ 
is both an $\N_{0}$-graded and an $\monoid$-graded  Hopf algebra,
but is not locally finite in either sense.

\begin{remark*}
There is an isomorphism of Hopf algebras in $\ydg{J}{\monoid}$
\begin{align}\label{eq:M-dual}
{}^{\leftdual}(\toba(C)) \simeq \toba({}^{\leftdual}C). 
\end{align}
\end{remark*}
  Indeed, by definition of Nichols algebra we have
 \begin{align*}
 {}^{\leftdual}(\toba(C)) \simeq  \oplus_{n\in \N_0 }{}^{\leftdual}\left(\toba^n(C)\right) 
 \simeq  \ku \oplus {}^{\leftdual}C \oplus \dots \simeq \toba({}^{\leftdual}C).
 \end{align*}

\subsection{The core} We introduce now the main notion of this Section.
\emph{For the whole subsection we fix a Hopf algebra $B$ in $\ydg{J}{\monoid}$ such that 
$B^\cero=\ku 1$.} Then $B$ is connected, 
\begin{align*}
\ker \varepsilon &= \bigoplus_{\alpha > \cero} B^\alpha, &&\text{and} 
&\bigcap_{n\ge 1}(\ker \varepsilon)^n &=\{0\}.
\end{align*} 

\begin{definition} \label{def:core}
A \emph{core of $B$} is 
an object $C \in \ydg{J}{\monoid}$ 
such that $B$ and $\toba(C)$ are isomorphic  as objects in $\ydg{J}{\monoid}$ 
(not necessarily as Hopf algebras). 
\end{definition}

The grading by $\monoid$ and the assumptions on $\monoid$ itself are
an essential part of our definition of a core.
Indeed, note that in the category of vector spaces over characteristic $0$ fields, the Nichols algebra
of any finite-dimensional vector space $V$ is isomorphic to a polynomial ring in $\dim V$ indeterminates.
Hence a non-trivial affine Hopf algebra $A$ admits either no $V$ or infinitely many non-isomorphic $V$
such that $A\simeq \toba(V)$ as vector spaces.
In contrast, in the graded setting, 
we will prove an existence and uniqueness result in Theorem~\ref{thm:existence}.
Note however that variations of our definition e.g. towards suitably filtered Hopf algebras are reasonable.

\begin{lemma}
\label{lem:core_dual}
If $B$ has a core $C$, then ${}^{\leftdual}C$ is a core of  ${}^{\leftdual}B$. 
\end{lemma}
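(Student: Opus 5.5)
The plan is to dualize the isomorphism in the definition of the core and then invoke the remark \eqref{eq:M-dual}. Suppose $C$ is a core of $B$, so there is an isomorphism $f\colon B \to \toba(C)$ in $\ydg{J}{\monoid}$; it need not respect products or coproducts. Such an $f$ preserves the $\monoid$-grading, so it restricts to isomorphisms $f^\alpha\colon B^\alpha\to \toba(C)^\alpha$ of finite-dimensional Yetter-Drinfeld modules, one for each $\alpha\in\monoid$.

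First, I will show that ${}^{\leftdual}(-)$ is a contravariant functor on $\ydg{J}{\monoid}$ that sends isomorphisms to isomorphisms. Each homogeneous component is finite-dimensional, hence rigid in $\yd{J}$, so the left dual ${}^*(B^\alpha)$ carries the standard Yetter-Drinfeld structure and the transpose ${}^*(f^\alpha)$ is an isomorphism in $\yd{J}$. Summing over $\alpha$ gives an isomorphism ${}^{\leftdual}\toba(C)\to{}^{\leftdual}B$ in $\ydg{J}{\monoid}$. Local finiteness is what makes this work: for non-locally-finite objects the graded dual would not be the right notion.

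Next, I will identify ${}^{\leftdual}\toba(C)$ with $\toba({}^{\leftdual}C)$ as objects of $\ydg{J}{\monoid}$ using \eqref{eq:M-dual}. That isomorphism is even one of Hopf algebras, but only the object-level statement is needed here. The remaining check is that $C$ is itself locally finite, so that ${}^{\leftdual}C$ lies in $\ydg{J}{\monoid}$. This holds because $C\in\ydg{J}{\monoid}$ by definition of the core, and also because $C$ embeds in $\toba(C)\simeq B$ as the degree-one part of the $\N_0$-grading. One should also note that $C^{\cero}=0$, since $B^{\cero}=\ku$ and $\toba(C)^{\cero}=\ku$ exactly when $C^{\cero}=0$; this keeps $\toba(C)$ locally finite. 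Composing the two isomorphisms gives ${}^{\leftdual}B\simeq\toba({}^{\leftdual}C)$ in $\ydg{J}{\monoid}$. We already know ${}^{\leftdual}B$ is a Hopf algebra in $\ydg{J}{\monoid}$, and $({}^{\leftdual}B)^{\cero}=\ku$, so ${}^{\leftdual}C$ is a core of ${}^{\leftdual}B$.

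The main obstacle is justifying \eqref{eq:M-dual} carefully, as the paper only sketches it. The degree-$n$ part of $\toba(C)$ is the image of the quantum symmetrizer $\mathfrak S_n$ on $C^{\otimes n}$. Its left graded dual is the image of the transposed symmetrizer on $({}^{\leftdual}C)^{\otimes n}$, where we identify ${}^{*}(C^{\otimes n})$ with $({}^{*}C)^{\otimes n}$ in each $\monoid$-degree, with the correct order reversal. The transposed braiding is the braiding of the dual, so this image is $\toba^n({}^{\leftdual}C)$. The point needing care is that all these identifications happen inside finite-dimensional $\monoid$-homogeneous pieces of the tensor powers; that is where hypothesis \eqref{eq:well_founded} enters.
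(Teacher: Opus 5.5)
Your proof is correct and is the paper's argument: the paper's proof is the one line ``this follows from \eqref{eq:M-dual}'', which is exactly your step of dualizing the graded isomorphism $B\simeq\toba(C)$ componentwise and then applying \eqref{eq:M-dual}. Your additional checks (that $C^{\cero}=0$, local finiteness of the tensor powers, and the symmetrizer justification of \eqref{eq:M-dual}) fill in details the paper leaves implicit.
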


\begin{proof}
This follows from the fact \eqref{eq:M-dual}.     
\end{proof}

\begin{lemma}
\label{lem:unique_core}
The core of $B$, if it exists, is unique up to isomorphism as an object in $\ydg{J}{\monoid}$. 
\end{lemma}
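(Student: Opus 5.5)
The plan is to show that $C$ can be recovered, degree by degree, from the $\monoid$-graded object $B$ alone, by induction along the order of $\monoid$. Suppose $C$ and $C'$ are both cores of $B$. Then $\toba(C)\simeq B\simeq\toba(C')$ in $\ydg{J}{\monoid}$. Since $B^{\cero}=\ku1$, the Example gives $C^{\cero}=C'^{\cero}=0$. Hence it suffices to prove the following: if $\toba(C)\simeq\toba(C')$ in $\ydg{J}{\monoid}$ with $C^{\cero}=C'^{\cero}=0$, then $C^\alpha\simeq C'^\alpha$ as objects of $\yd J$ for every $\alpha\in\monoid$. Direct sums of these isomorphisms then give $C\simeq C'$ in $\ydg{J}{\monoid}$.

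The key observation concerns the homogeneous component $\toba(C)^\alpha$. Since $C^{\cero}=0$, it splits in $\yd J$ as
\begin{align*}
\toba(C)^\alpha \simeq C^\alpha \oplus \bigoplus_{n\ge 2}\toba^n(C)^\alpha .
\end{align*}
By the Remark, every summand of a decomposition of $\alpha$ into at least two nonzero elements is $<\alpha$. Therefore $\toba^n(C)^\alpha$ for $n\ge2$ is a quotient, in $\yd J$, of
\[
\bigoplus_{\alpha_1+\dots+\alpha_n=\alpha,\ \alpha_i>\cero} C^{\alpha_1}\otimes\dots\otimes C^{\alpha_n},
\]
namely the image of the quantum symmetrizer. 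This image is determined by the braided object $\bigoplus_{\beta<\alpha}C^\beta\in\yd J$ together with its $\monoid$-grading. By \eqref{eq:well_founded} the set $\{\beta<\alpha\}$ is finite, so everything is finite-dimensional. It follows that $D^\alpha\coloneqq\bigoplus_{n\ge2}\toba^n(C)^\alpha$ is isomorphic to $\toba\bigl(\bigoplus_{\beta<\alpha}C^\beta\bigr)^\alpha$, computed in $\ydg{J}{\monoid}$.

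Now run the induction on $\alpha$, which is legitimate because initial segments are finite. Assume $C^\beta\simeq C'^\beta$ for all $\beta<\alpha$, and fix graded isomorphisms $\phi_\beta$. These assemble into an isomorphism $\bigoplus_{\beta<\alpha}C^\beta\simeq\bigoplus_{\beta<\alpha}C'^\beta$ in $\ydg{J}{\monoid}$. By functoriality of $\toba$, this induces $D^\alpha\simeq D'^\alpha$ in $\yd J$. We then have
\[
C^\alpha\oplus D^\alpha\simeq \toba(C)^\alpha\simeq\toba(C')^\alpha\simeq C'^\alpha\oplus D'^\alpha\simeq C'^\alpha\oplus D^\alpha .
\]
All these objects are finite-dimensional.

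The main obstacle is cancelling $D^\alpha$ to conclude $C^\alpha\simeq C'^\alpha$. The approach is the Krull--Schmidt theorem for finite-dimensional objects of $\yd J$. These are finite-dimensional modules over an algebra: the Drinfeld double when $J$ is finite-dimensional, or in general the finite-dimensional comodule–module structure, which can be realized as modules over a finite-dimensional algebra acting on the relevant finite sum. Finite-length objects in an abelian $\ku$-linear category whose endomorphism rings are finite-dimensional have local endomorphism rings for indecomposables, so Krull--Schmidt holds and direct summands cancel. This gives $C^\alpha\simeq C'^\alpha$ and completes the induction.

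The delicate points are the identification of $D^\alpha$ with a functor of the lower part only, which rests on the Remark about decompositions, and the validity of Krull--Schmidt in $\yd J$ for arbitrary $J$. For the latter, the fallback is to note that finite-dimensional Yetter–Drinfeld modules are finite-length objects with finite-dimensional Hom spaces, so Fitting's lemma applies.
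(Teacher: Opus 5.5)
Your proof is correct and follows the paper's argument. It inducts along $\monoid$ using the finiteness of initial segments, observes that for $n\ge 2$ the degree-$\alpha$ part of $\toba^n(C)$ coincides with that of $\toba(C^{<\alpha})$, applies functoriality of $\toba$, and cancels. The one addition is that you explicitly justify the cancellation step via Fitting's lemma and Krull--Schmidt for finite-dimensional, hence finite-length, objects of $\yd J$, which the paper uses tacitly when it writes ``we conclude that $C^\alpha\simeq D^\alpha$''.
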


\begin{proof}
Let $C=\bigoplus_{\alpha \in \monoid }C^\alpha $ and $D=\bigoplus_{\alpha \in \monoid}D^\alpha $ be cores of $B$. Note that \[
C^\cero=D^\cero=\{0\}
\]
since $B^\cero=\ku 1$.
By assumption, $\toba(C)$ and $\toba(D)$ are isomorphic in $\ydg{J}{\monoid}$. By comparing components we conclude that
\begin{align} \label{eq:gradediso}
&\bigoplus_{n\ge 1} \, \sum_{\alpha_1+\dots +\alpha_n= \alpha } C^{\alpha_1}\cdots C^{\alpha_n}
\simeq
\bigoplus _{n\ge 1} \, \sum _{\alpha_1+\dots +\alpha_n = \alpha } D^{\alpha_1} \cdots D^{\alpha_n}
\end{align}
in $\yd J$ for all $\alpha \in \monoid$. Note that on both sides of \eqref{eq:gradediso}
the object is isomorphic to $B^\alpha$ and hence finite-dimensional.
% We separate the terms with $n=1$:
% \begin{align*}
% &C^\alpha \oplus \bigoplus_{n\ge 2} \, \sum_{\alpha_1+\dots +\alpha_n= \alpha } C^{\alpha_1}\cdots C^{\alpha_n}
% \simeq
% D^{\alpha}\oplus \bigoplus _{n\ge 2}\, \sum _{\alpha_1+\dots +\alpha_n = \alpha } D^{\alpha_1} \cdots D^{\alpha_n}
% \end{align*}
% in $\yd J$ for all $\alpha \in \monoid $.
Let now $\alpha \in \monoid$ be such that $C^\beta \simeq D^\beta$ for all $\beta <\alpha$.
Fix a sum $\alpha_1+\cdots +\alpha_n=\alpha$ with $n\ge 2$ and non-zero summands $\alpha_i$'s. 
By assumption on $\monoid$, one has $\alpha_i<\alpha$ and hence $C^{\alpha_i} \simeq D^{\alpha_i}$ for all $i$.
Let
\[ C^{<\alpha}=\bigoplus_{\beta <\alpha}C^\beta, \qquad D^{<\alpha}=\bigoplus_{\beta <\alpha}D^\beta. \]
Then, by functoriality of $\toba$, the summands for $n\ge 2$ in \eqref{eq:gradediso} are the summands
of $\toba (C^{<\alpha})$ and $\toba(D^{<\alpha})$
of degree $\alpha$, and hence isomorphic. We conclude that $C^\alpha \simeq D^\alpha $.
By induction on $\alpha$ it follows that $C\simeq D$ in $\ydg J \monoid$.
\end{proof}

For any  Hopf algebra $A$ in $\yd J$, let $\gr_{\corad}A$ be the $\N_0$-graded  Hopf algebra
 in $\yd J$ associated to the coradical filtration of $A$. Similarly, let 
$\gr^{\ker\varepsilon}A$ be the $\N_0$-graded  Hopf algebra  in $\yd J$
associated to the descending filtration of $A$ by powers of $\ker\varepsilon$.

Let $A$ be an $\monoid$-graded  Hopf algebra in $\yd J$ such that
the coradical $A_0$ is an $\monoid$-graded subobject. Then all terms $(A_n)_{n \geq 0}$ 
of the coradical filtration are $\monoid$-graded subobjects 
and $\gr_{\corad}A$ is  an $(\monoid \times\N_0)$-graded  coalgebra.
If in addition $A_0$ is a Hopf subalgebra in $\yd J$, then 
$\gr_{\corad}A$ is  an $(\monoid \times\N_0)$-graded  Hopf algebra  in $\yd J$.
\begin{lemma} \label{lem:injections}
Let $A$ be an $(\monoid \times\N_0)$-graded  Hopf algebra in $\yd J$ such that
\begin{align}\label{eq:A-connected}
A^{\monoid,0}=\ku 1.
\end{align}
Then there exists an injective morphism of $\monoid$-graded objects in $\yd J$
\begin{align}\label{eq:injections}
A^{\monoid,1}\hookrightarrow (\gr_{\corad} A)^{\monoid,1}.
\end{align}
\end{lemma}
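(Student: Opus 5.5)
Throughout, $A$ is an $(\monoid\times\N_0)$-graded Hopf algebra in $\yd J$ with $A^{\monoid,0}=\ku1$. We use its $\N_0$-grading only to pin down the coradical, and its $\monoid$-grading to make every construction $\monoid$-graded. Under \eqref{eq:A-connected}, $A=\bigoplus_n A^{\monoid,n}$ is an $\N_0$-graded connected coalgebra. For such a coalgebra the coradical is $A_0=A^{\monoid,0}=\ku1$: every simple subcoalgebra $S$ lies in $\ku1$, because a nonzero component of an element of $S$ of maximal $\N_0$-degree leads to a contradiction with $\Delta(S)\subset S\otimes S$. The standard argument gives exactly this.

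Next, $A_0=\ku1$ is $\monoid$-graded (it sits in degree $\cero$, since $1$ is homogeneous of degree $(\cero,0)$). It is also a Hopf subalgebra in $\yd J$. By the discussion preceding the lemma, each $A_n$ is therefore $\monoid$-graded and $\gr_{\corad}A$ is $(\monoid\times\N_0)$-graded. In particular $(\gr_{\corad}A)^{\monoid,1}=A_1/A_0$, with its $\monoid$-grading.

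The key inclusion is $A^{\monoid,1}\subseteq A_1$. For $x\in A^{\monoid,1}$, compatibility of $\Delta$ with the $\N_0$-grading gives
\[\Delta(x)\in A^{\monoid,0}\otimes A^{\monoid,1}+A^{\monoid,1}\otimes A^{\monoid,0}=\ku1\otimes A^{\monoid,1}+A^{\monoid,1}\otimes\ku1 .\]
This lies in $A_0\otimes A+A\otimes A_0$, so $x\in A_1=\Delta^{-1}(A\otimes A_0+A_0\otimes A)$. Moreover $A^{\monoid,1}\cap A_0=A^{\monoid,1}\cap A^{\monoid,0}=0$.

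Define the map \eqref{eq:injections} as the composite
\[A^{\monoid,1}\hookrightarrow A_1\twoheadrightarrow A_1/A_0 .\]
It is injective because $A^{\monoid,1}\cap A_0=0$. It is a morphism in $\yd J$ because $A^{\monoid,1}$, $A_1$ and $A_0$ are all Yetter--Drinfeld subobjects: the gradings are in $\yd J$, and the coradical filtration of a Hopf algebra in $\yd J$ consists of subobjects. It preserves the $\monoid$-grading because the inclusion and the projection are both $\monoid$-homogeneous.

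The only point needing care is identifying the coradical $A_0$ with $A^{\monoid,0}$ in the braided setting. This does not actually use the braiding: it is a pure coalgebra statement about $\N_0$-graded connected coalgebras. If a quick justification is needed, one can instead show $A^{\monoid,0}\subseteq A_0$ directly, which suffices, since then $A^{\monoid,1}\subseteq A_1$ still holds. The remaining requirement, $A^{\monoid,1}\cap A_0=0$, follows because $A_0\subseteq A^{\monoid,0}$ for connected graded coalgebras.
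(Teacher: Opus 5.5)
Your proof is correct and follows the paper's argument. Connectedness gives $A_0=\ku 1$. Each $x\in A^{\monoid,1}$ has $\Delta(x)\in \ku1\otimes A^{\monoid,1}+A^{\monoid,1}\otimes\ku1$, so it is primitive, lies in $A_1$, and the natural map $A^{\monoid,1}\to A_1/A_0$ is injective because $A^{\monoid,1}\cap\ku1=0$. You simply spell out details that the paper's two-line proof takes for granted: the identification of the coradical, and the fact that $A_0$ and $A_1$ are $\monoid$-graded subobjects in $\yd J$.
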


\begin{proof}
Since \eqref{eq:A-connected} says that $A$ is connected, it follows from the previous discussion that $\gr_{\corad}A$ is  an $(\monoid \times\N_0)$-graded  Hopf algebra  in $\yd J$. 
Now, the elements of $A^{\monoid,1}$ are primitive, hence the (injective)  natural map 
\eqref{eq:injections}.
\end{proof}

Let $A$ be an $\monoid$-graded  Hopf algebra in $\yd J$. 
Then the powers of the augmentation ideal are $\monoid$-graded subobjects 
and $\gr^{\ker\varepsilon}A$ is  an $(\monoid \times\N_0)$-graded  algebra.
If \eqref{eq:A-connected} holds, then 
$A$ is connected  and $\gr^{\ker\varepsilon}A$ is  an $(\monoid \times\N_0)$-graded  Hopf algebra  in $\yd J$.

\begin{lemma}
\label{lem:surjection}
Let $A$ be an $(\monoid\times \N_0)$-graded  Hopf algebra in $\yd J$ such that
\eqref{eq:A-connected} holds.
Then there exists a surjective morphism  of $\monoid$-graded objects in $\yd J$
\begin{align*}
(\gr^{\ker\varepsilon}A)^{\monoid,1}\twoheadrightarrow 
A^{\monoid,1}
\end{align*}
\end{lemma}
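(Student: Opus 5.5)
Since \eqref{eq:A-connected} holds, $A$ is connected and $\ker\varepsilon = \bigoplus_{n\ge 1} A^{\monoid,n}$. The plan is to write down the obvious map $A^{\monoid,1}\to \ker\varepsilon/(\ker\varepsilon)^2=(\gr^{\ker\varepsilon}A)^{\monoid,1}$ and prove that it is an isomorphism. This gives more than the statement asks for, and the inverse isomorphism is the desired surjection. The first step is to note that $\ker\varepsilon$ and all its powers are $(\monoid\times\N_0)$-graded subobjects in $\yd J$. The reason is that $\varepsilon$ is a morphism in $\yd J$ which respects both gradings (it vanishes on $A^{\alpha,n}$ unless $(\alpha,n)=(\cero,0)$), and the multiplication is a morphism in $\yd J$ preserving degrees. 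Hence the quotient $\ker\varepsilon/(\ker\varepsilon)^2$ is an $\monoid$-graded object in $\yd J$. Moreover, since $\ker\varepsilon$ is an $\monoid$-graded subobject, the filtration by its powers is compatible with the $\monoid$-grading, so $(\gr^{\ker\varepsilon}A)^{\monoid,1}$ identifies with this quotient as $\monoid$-graded Yetter--Drinfeld modules.

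Next I would use the $\N_0$-grading to see that
\[
(\ker\varepsilon)^2=\sum_{m,n\ge 1}A^{\monoid,m}A^{\monoid,n}\subseteq \bigoplus_{k\ge 2}A^{\monoid,k}.
\]
Consequently, the composition $A^{\monoid,1}\hookrightarrow \ker\varepsilon\twoheadrightarrow \ker\varepsilon/(\ker\varepsilon)^2$ is injective. It is a morphism of $\monoid$-graded objects in $\yd J$, because it is a composition of such morphisms.

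In the other direction, I would consider the projection $\pi\colon \ker\varepsilon\to A^{\monoid,1}$ onto the $\N_0$-degree one component. This is a morphism in $\yd J$ that preserves the $\monoid$-grading, since $A$ is $(\monoid\times\N_0)$-graded in $\yd J$. By the inclusion displayed above, $\pi$ vanishes on $(\ker\varepsilon)^2$, so it factors through a morphism
\[
\bar\pi\colon(\gr^{\ker\varepsilon}A)^{\monoid,1}\to A^{\monoid,1}.
\]
The map $\bar\pi$ is surjective because $\pi$ restricts to the identity on $A^{\monoid,1}$. This already gives the surjection required in the statement.

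There is no real obstacle in this argument. The only point that needs some care is checking that $\bar\pi$ is compatible with both the Yetter--Drinfeld structure and the $\monoid$-grading. Both follow from the fact that the action, the coaction and the $\monoid$-grading preserve each $A^{\monoid,n}$. It is worth noting that $\bar\pi$ need not be injective, because $A^{\monoid,1}$ need not generate $A$. This is why only a surjection is claimed, which is dual to the injection \eqref{eq:injections} of Lemma~\ref{lem:injections}.
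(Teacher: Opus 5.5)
Your argument is correct and is the paper's proof: the paper simply records that $\ker\varepsilon=\bigoplus_{n\ge1}A^{\monoid,n}$ and $(\ker\varepsilon)^2\subseteq\bigoplus_{n\ge2}A^{\monoid,n}$. Hence the projection onto $\N_0$-degree one factors through $\ker\varepsilon/(\ker\varepsilon)^2$, exactly as your $\bar\pi$ does.

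You should, however, delete your opening claim that $A^{\monoid,1}\to\ker\varepsilon/(\ker\varepsilon)^2$ is an isomorphism. Only injectivity holds in general, as your own closing remark that $\bar\pi$ need not be injective already concedes. For instance, a primitive $x$ placed in $\N_0$-degree $2$ in $\ku[x]$ gives a nonzero class in $\ker\varepsilon/(\ker\varepsilon)^2$ that is missed by $A^{\monoid,1}$. So does $y_{12}$ in the paper's $U_q^+(\mathfrak{sl}_3)$ example.
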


\begin{proof}
By \eqref{eq:A-connected},  $A^{\monoid,0}=\ku 1$,   
$\ker\varepsilon=\bigoplus_{n\geq1}A^{\monoid,n}$ and
$(\ker\varepsilon)^2\subseteq \bigoplus_{n\geq2}A^{\monoid,n}$.
This implies the claim. 
\end{proof}

It is well-known that a connected coradically graded Hopf algebra $A$ in $\yd J$ is the Nichols algebra of its degree one part
if and only if $A$ is generated as an algebra by its degree one part.
The proof of Theorem~\ref{thm:existence} below will rely on the following weak graded version of this claim.

\begin{lemma}
\label{lem:braidedgraded}
    Let $A$ be an $(\monoid \times \N_0)$-graded Hopf algebra in $\yd J$ such that $A^\cero =\ku 1$, the $\monoid$-grading is locally finite, and the $\N_0$-grading of the coalgebra $A$ is strict.
    Let $V=A^{\monoid,1}$.
    Then $\toba(V)$ is $(\monoid \times \N_0)$-graded.
    Moreover, for any non-zero $\alpha \in \monoid$, the natural graded embedding $V\to A$ induces a graded isomorphism
    %\[ A^{\alpha,\N_0}\simeq \toba(V)^{\alpha ,\N_0} \]
        \[ \bigoplus _{0<\beta\le \alpha} A^{\beta,\N_0}\simeq \bigoplus_{0<\beta \le \alpha}\toba(V)^{\beta ,\N_0} \]
    in $\yd J$ if and only if
    $\bigoplus_{0<\beta\le \alpha}A^{\beta,1}\simeq \bigoplus_{0<\beta\le \alpha}
    \left(A/(\ker \varepsilon_A)^2\right)
    ^{\beta ,\N_0}$ in $\yd J$.
\end{lemma}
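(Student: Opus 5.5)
The plan is to reduce both conditions in the statement of Lemma~\ref{lem:braidedgraded} to one concrete property of the natural maps, namely
\[
A^{\beta,\N_0}\subseteq \ku\langle V\rangle \quad\text{for all } 0<\beta\le\alpha .
\]
First, $V=A^{\monoid,1}$ is an $\monoid$-graded, locally finite object of $\yd J$, so $T(V)$ and $\toba(V)$ are $(\monoid\times\N_0)$-graded, as in the Example before Definition~\ref{def:core}. Since the $\N_0$-grading of the coalgebra $A$ is strict, $A$ is a connected coradically graded Hopf algebra in $\yd J$. Hence the subalgebra $\ku\langle V\rangle$ of $A$ generated by $V$ is isomorphic to $\toba(V)$, and the inclusion $\toba(V)\simeq\ku\langle V\rangle\hookrightarrow A$ is an injective morphism of $(\monoid\times\N_0)$-graded objects. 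All homogeneous components involved are finite-dimensional, by local finiteness together with \eqref{eq:well_founded}. Therefore an abstract isomorphism $\bigoplus_{0<\beta\le\alpha}A^{\beta,\N_0}\simeq\bigoplus_{0<\beta\le\alpha}\toba(V)^{\beta,\N_0}$ in $\yd J$ forces equality of dimensions in each $\monoid$-degree. Combined with injectivity of the natural map, this is equivalent to the natural map being surjective there, which is the displayed property.

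Next I treat the right-hand condition the same way. Set $I=\ker\varepsilon_A$. Since $A^{\cero}=\ku1$ and $A^{\monoid,0}=\ku 1$, we have $I=\bigoplus_{\beta>\cero}A^{\beta,\N_0}$, and $I^2$ is a graded subobject contained in $\bigoplus_{n\ge2}A^{\monoid,n}$, as in the proof of Lemma~\ref{lem:surjection}. So the natural composite $A^{\beta,1}\to A^{\beta,\N_0}\to (A/I^2)^{\beta,\N_0}$ is injective. Again by finite-dimensionality, an abstract isomorphism of these objects for all $0<\beta\le\alpha$ holds if and only if this natural map is bijective in those degrees, that is,
\[
A^{\beta,\N_0}=A^{\beta,1}+(I^2)^{\beta} \quad\text{for all } 0<\beta\le\alpha .
\]

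It remains to show that the two displayed properties are equivalent. Assume the first one. An element of $A^{\beta,\N_0}$ is then a sum of elements of $V$ and of products of at least two homogeneous elements of $V$. Each element of $V$ has nonzero $\monoid$-degree because $V\cap A^{\cero}=0$, so these products lie in $I^2$, and the second property follows. Conversely, assume the second property and argue by induction on $\beta$; this is legitimate since $\{\beta:\beta\le\alpha\}$ is finite by \eqref{eq:well_founded}. We have
\[
(I^2)^\beta=\sum_{\beta_1+\beta_2=\beta,\ \beta_i>\cero}A^{\beta_1,\N_0}A^{\beta_2,\N_0},
\]
and by the Remark preceding the subsection, $\beta_1,\beta_2<\beta$. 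By induction, $A^{\beta_i,\N_0}\subseteq\ku\langle V\rangle$, hence $A^{\beta,\N_0}\subseteq V+\ku\langle V\rangle=\ku\langle V\rangle$, which is the first property.

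The only delicate point, which I expect to be the main obstacle, is the passage from the abstract isomorphisms in $\yd J$ stated in the lemma to bijectivity of the natural maps. This needs exactly the injectivity of $\toba(V)\to A$, which comes from strictness of the grading, the injectivity of $V\to A/I^2$, and finite-dimensionality of each $\monoid$-homogeneous component.
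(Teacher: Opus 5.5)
Your proof is correct and follows essentially the paper's argument: injectivity of $\toba(V)\to A$ from strictness of the grading, the only-if direction because products of at least two elements of $V$ lie in $(\ker\varepsilon_A)^2$, and the if direction by an induction showing $A^{\beta,\N_0}\subseteq\ku\langle V\rangle$ for $0<\beta\le\alpha$. The one difference is that you induct on the $\monoid$-degree $\beta$, using that nontrivial decompositions of $\beta$ have strictly smaller summands, whereas the paper takes a counterexample of minimal $\N_0$-degree $n$; both work, and your finite-dimensionality argument reducing the abstract isomorphisms to bijectivity of the natural maps makes explicit a step the paper leaves implicit.
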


\begin{proof}
    The assumptions of the lemma imply that the natural graded embedding of $V$ into $A$ induces a natural injective $(\monoid \times \N_0)$-graded Hopf algebra morphism $f:\toba(V)\to A$.
    Hence the only if part of the claim holds. Indeed, the isomorphisms $A^{\beta,n}\simeq \toba(V)^{\beta,n}$ for $n\ge 1$ and $0<\beta\le \alpha$ and the local finiteness of the $\monoid$-grading imply that $A^{\beta ,n}=(V^n)^{\beta,n}\subseteq ((\ker \varepsilon_A)^2)^{\beta,n}$ for all $n\ge 2$ and $0<\beta \le \alpha$.

    We prove now the if part of the Lemma. Assume that $A^{\beta ,n}\not \simeq \toba (V)^{\beta ,n}$ for some $n\ge 2$ and $0<\beta \le \alpha$. We may assume that $n$ is minimal under this condition. Then
    $A^{\monoid, k}=V^k=(\ker \varepsilon)^{\monoid,k}$ for all $1\le k<n$. Therefore
    \[ \big((\ker \varepsilon)^2\big)^{\monoid,n}=
    \sum_{k=1}^{n-1} A^{\monoid,k}A^{\monoid,n-k}=V^{\monoid ,n}, \]
    and hence
    \[ \left(A/(\ker \varepsilon)^2\right)^{\beta ,n}\simeq A^{\beta ,n}/((\ker \varepsilon)^2)^{\beta ,n}
    \simeq A^{\beta ,n}/(V^n)^{\beta,n}
    \not \simeq 0,
    \]
    which confirms the if part of the Lemma.
\end{proof}

\begin{theorem}
\label{thm:existence}
Let $B$ be a Hopf algebra  in $\ydg{J}{\monoid}$ such that 
$B^\cero=\ku 1$.
Assume that  $B$ is a semisimple object in $\yd J$. Then $B$ admits a  core
that is unique up to isomorphism. 
\end{theorem}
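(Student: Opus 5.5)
Uniqueness is Lemma~\ref{lem:unique_core}, so only existence needs proof. I will construct the core $C=\bigoplus_{\alpha>\cero}C^\alpha$ degree by degree by induction on $\alpha\in\monoid$; this is possible because of \eqref{eq:well_founded} and because $\monoid$ is well ordered with minimal element $\cero$. The inductive claim is: for each $\alpha$ there is $C^{\le\alpha}=\bigoplus_{\beta\le\alpha}C^\beta$ in $\ydg{J}{\monoid}$ with $\toba(C^{\le\alpha})^{\beta}\simeq B^\beta$ in $\yd J$ for all $\beta\le\alpha$. Since $\toba(C)^\beta$ only involves $C^{\gamma}$ with $\gamma\le\beta$, the components are compatible, and the union $C$ is then a core.

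For the inductive step, suppose $C^{<\alpha}$ is constructed. As in the proof of Lemma~\ref{lem:unique_core}, $\toba(C^{<\alpha})^\alpha$ is the sum of the terms with $n\ge2$, and these only involve degrees $<\alpha$. I then need $C^\alpha$ with $C^\alpha\oplus\toba(C^{<\alpha})^\alpha\simeq B^\alpha$. Because $B$ is semisimple and $B^\alpha$ is finite-dimensional, it suffices to show that $\toba(C^{<\alpha})^\alpha$ is isomorphic to a subobject, or equivalently a direct summand, of $B^\alpha$. Once this holds, $C^\alpha$ is a complement, and the Nichols algebra stays semisimple in the relevant degrees as a subquotient of $B$-type objects. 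This embedding is the main obstacle.

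To get the embedding, I will pass through the associated graded objects. Let $A=\gr^{\ker\varepsilon}B$. This is an $(\monoid\times\N_0)$-graded Hopf algebra, generated in $\N_0$-degree one, with $A^{\monoid,0}=\ku1$, and $A^\beta\simeq B^\beta$ as objects by semisimplicity. Next let $A'=\gr_{\corad}A$. This is strictly graded and again has $A'^\beta\simeq B^\beta$. The subalgebra of $A'$ generated by $W=A'^{\monoid,1}$ is $\toba(W)$, so $\toba(W)^\beta$ embeds in $B^\beta$ for all $\beta$. By Lemma~\ref{lem:injections}, $A^{\monoid,1}$ injects into $W$. By the induction hypothesis and a comparison via Lemma~\ref{lem:braidedgraded} in degrees $<\alpha$, I expect $W^{<\alpha}\simeq C^{<\alpha}$: Lemma~\ref{lem:braidedgraded} relates the agreement of $A'$ with $\toba(W)$ below $\alpha$ to the indecomposables $A'/(\ker\varepsilon)^2$. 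If that identification fails, I would instead just check that $C^{<\alpha}$ embeds in $W^{<\alpha}$. In either case, functoriality of $\toba$ together with injectivity preserving Nichols algebras of semisimple objects gives that $\toba(C^{<\alpha})^\alpha$ embeds into $\toba(W)^\alpha$, and hence into $B^\alpha$.

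A cleaner alternative, if the bookkeeping above goes wrong, is to prove the stronger statement that $C\coloneqq$ the $\monoid$-graded indecomposables of the iterated graded object work. For this I would use Lemma~\ref{lem:surjection} and Lemma~\ref{lem:injections} to sandwich dimensions: the indecomposables surject onto the primitives, the primitives inject into $W$, and a count in each degree $\alpha$ forces all these maps to be isomorphisms.
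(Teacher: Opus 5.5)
Your reduction is correct. Since $C^{\cero}=0$, one has $\toba(C^{\le\alpha})^\alpha\simeq C^\alpha\oplus\toba(C^{<\alpha})^\alpha$, so by semisimplicity everything hinges on embedding $\toba(C^{<\alpha})^\alpha$ into $B^\alpha$.

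The gap is in how you obtain that embedding. A single application of $\gr^{\ker\varepsilon}$ followed by $\gr_{\corad}$ does not give a $W$ with $W^{<\alpha}\simeq C^{<\alpha}$. It does not even give $C^{<\alpha}\hookrightarrow W^{<\alpha}$. The paper's own example shows this. Take $B=U_q^+(\mathfrak{sl}_3)$ with $q$ a primitive third root of $1$, graded by $\monoid=\N_0$ with $x_1,x_2$ in degree $1$.
\begin{itemize}
\item Since $B$ is a pre-Nichols algebra, $\gr^{\ker\varepsilon}B\simeq B$.
\item $W=(\gr_{\corad}B)^{\monoid,1}$ is spanned by $x_1,x_2,y_1,y_2$, which live in degrees $1$ and $3$.
\item The core of $B$ is spanned by $x_1,x_2,y_1,y_{12},y_2$. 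Compare the two PBW bases in that example; all braiding scalars involving the $y$'s are trivial.
\item $y_{12}$, the class of $x_{12}^3$, has degree $6$ and coradical degree $2$ in $\gr_{\corad}B$, so it is not in $W$. Yet $y_{12}\notin(\ker\varepsilon)^2$.
\end{itemize}
Hence $W^6=0\neq C^6$, and for every $\alpha>6$ your inductive step has no embedding $C^{<\alpha}\hookrightarrow W^{<\alpha}$ to use. Lemma~\ref{lem:braidedgraded} cannot repair this. It says $\gr_{\corad}B$ agrees with $\toba(W)$ up to degree $\alpha$ exactly when the degree-one part exhausts the indecomposables there, and that is precisely what fails in degree $6$.

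The missing idea is to iterate until stabilization, which is what the paper does. Your closing ``cleaner alternative'' points this way, but as stated the count forces nothing at a fixed stage. In the example, the surjection from the indecomposables of $\gr_{\corad}B$ onto its primitives has kernel $\ku y_{12}$. The paper's argument runs as follows.
\begin{itemize}
\item Set $B_1=B$, $B_{2n}=\gr_{\corad}B_{2n-1}$, $B_{2n+1}=\gr^{\ker\varepsilon}B_{2n}$, and $C_n=B_n^{\monoid,1}$.
\item Lemmas~\ref{lem:injections} and \ref{lem:surjection}, together with semisimplicity, give monomorphisms $C_n\hookrightarrow C_{n+1}$.
\item $C_n^\alpha$ sits inside $B_n^{\alpha,\N_0}\simeq B^\alpha$, which is finite-dimensional. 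So the sequence stabilizes in each degree, and by \eqref{eq:well_founded} simultaneously in all degrees $\le\alpha$.
\item At such a stage, $B_{2n}$ is coradically graded and its degree-one part coincides with its indecomposables in degrees $\le\alpha$.
\item Lemma~\ref{lem:braidedgraded} then gives $B^\beta\simeq\toba(C)^\beta$ for all $\beta\le\alpha$, where $C$ is the stable value.
\end{itemize}
This also makes your degree-by-degree construction of complements unnecessary: the core is read off directly as the stable degree-one part.
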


\begin{proof}
The uniqueness follows from Lemma~\ref{lem:unique_core}.
Regarding the existence, consider the family $(B_n)_{n\ge 1}$ of  Hopf algebras in $\yd J$, where
\begin{enumerate}[leftmargin=5ex,label=\rm{(\roman*)}]
\item $B_1=B$,

\medbreak
\item for each $n\ge 1$, $B_{2n}=\gr_{\corad}B_{2n-1}$,

\medbreak
\item for each $n\ge 1$, $B_{2n+1}=\gr^{\ker \varepsilon}B_{2n}$.
\end{enumerate}

\medbreak
Since $B$ is $\monoid$-graded, it follows that $B_n$ is $(\monoid\times\N_0)$-graded for all $n\geq2$, and that it is isomorphic to $B$ as an $\monoid$-graded object in $\yd J$.
Moreover, 
$B_n^{\monoid ,0}=\ku 1$ for all $n\ge 2$ since $B^\cero=\ku 1$.

For all $n\ge 2$ and $\alpha \in \monoid $ let $C_n^\alpha = B_n^{\alpha, 1}$ be the homogeneous component of $B_n$ of degree $(\alpha,1)$, and let 
$C_n=\bigoplus_{\alpha \in \monoid}C_n^\alpha = B_n^{\monoid, 1}$.
By Lemmas~\ref{lem:injections} and \ref{lem:surjection} and the semisimplicity of $B$, there exist injective morphisms $\iota_n:C_n\to C_{n+1}$ of $\monoid$-graded objects in $\yd J$. Since the $\monoid$-grading of $B$ is locally finite, the sequence $(C_n^\alpha )_{n\ge 2}$ stabilizes for each $\alpha \in \monoid$ at some $C^\alpha \in \yd J$. Let \[ C= \bigoplus_{\alpha \in \monoid}C^\alpha. \]
We are going to prove that $C$ is a core of $B$ by proving that $B^\alpha \simeq \toba(C)^\alpha$ for all $\alpha \in \monoid$ using  Lemma~\ref{lem:braidedgraded}. The latter claim for $\alpha=0$ holds by assumption.

Let $\alpha \in \monoid$ with $\alpha>0$, and let $n\ge 1$ be such that $C^\beta\simeq C_{2n}^\beta $ for all $\beta \le \alpha$. The existence of such $n$
is guaranteed by \eqref{eq:well_founded} and the above analysis of $C$.
%By construction, $B_{2n}$ is coradically graded. 
%Hence the subalgebra $\ku\langle C_{2n}\rangle $ of $B_{2n}$ generated by $C_{2n}$ is isomorphic to $\toba(C_{2n})$ as an $(\monoid \times \N_0)$-graded  Hopf algebra in $\yd J$.
Then $B^\beta\simeq B_{2n}^{\beta ,\N_0}$
in $\yd J$ for all $\beta$,
and $B_{2n}$ is coradically graded by definition. Moreover, $B_{2n}^{\beta,1}\cap (\ker \varepsilon_{B_{2n}})^2=0$ and
\[ B_{2n}^{\beta,1}\simeq C_{2n}^\beta \simeq C_{2n+1}^\beta \simeq B_{2n+1}^{\beta, 1}
\simeq (\ker \varepsilon_{B_{2n}} /(\ker \varepsilon_{B_{2n}})^2)^{\beta ,\N_0}\]
for all $\beta\le \alpha $ by construction.
Note that $B_{2n}^{\beta,\N_0}=(\ker \varepsilon_{B_{2n}})^{\beta,\N_0}$ for all $\beta >0$. Therefore Lemma~\ref{lem:braidedgraded} applied to $A=B_{2n}$
implies that
\[ B_{2n}^{\beta ,\N_0}\simeq \toba(B_{2n}^{\monoid,1})^{\beta,\N_0}\]
for all $0<\beta\le \alpha$ as an $(\monoid\times \N_0)$-graded object in $\yd J$.
Now recall that $B_{2n}^{\alpha ,\N_0}\simeq B^\alpha$ and that $B_{2n}^{\beta ,1}\simeq C_{2n}^\beta \simeq C^\beta$ for all $\beta \le \alpha$.
Thus $B^{\alpha }\simeq \toba(C)^\alpha$ in $\yd J$, as announced above, and hence $C$ is a core of $B$.
\end{proof}

\begin{cor}
\label{cor:eminent} 
Let $V$ be a braided vector space of diagonal type such that $\GK\toba(V)< \infty$.
Assume that the connected components of the Dynkin diagram of $V$ satisfy the restrictions in
Theorem~\ref{thm:eminent}. Then the eminent pre-Nichols 
algebra $\Ec$ of $V$ has a finite-dimensional core $C$ and 
the root system of $\toba(C)$ is finite. 
\end{cor}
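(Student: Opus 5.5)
Our plan is to apply Theorem~\ref{thm:existence} to $B=\Ec$ for existence of the core, and then read off $C$ from the exact sequence \eqref{eq:exact-sequence-prenichols}. Take $J=\ku\Gamma$, with $\Gamma$ a free abelian group realizing the diagonal braiding. Take $\monoid=\N_0^\theta$ with the total order of the Example, and grade $\Ec$ by $\N_0^\theta$ via $\deg v_i=e_i$; condition \eqref{eq:well_founded} holds. Since $\Ec$ is a pre-Nichols algebra of $V$ with $\dim V<\infty$, the component $\Ec^\alpha$ is a quotient of $T(V)^\alpha$, so it is finite-dimensional, and $\Ec^{\cero}=\ku1$. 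A diagonal-type object is a direct sum of one-dimensional Yetter--Drinfeld modules, so $\Ec$ is semisimple in $\yd{\ku\Gamma}$. Theorem~\ref{thm:existence} therefore gives a core $C$, unique up to isomorphism.

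To identify $C$ we use \eqref{eq:exact-sequence-prenichols}. Exactness of a sequence of graded braided Hopf algebras, together with the freeness of Proposition~\ref{pro:braided_free}, gives $\Ec\simeq \hZ\otimes\toba(V)$ as $\N_0^\theta$-graded objects in $\yd{\ku\Gamma}$. Now $\hZ$ is a $q$-polynomial algebra on finitely many homogeneous generators $z_1,\dots,z_k$, which are primitive root vectors of degrees $N_\beta\beta$. Let $Z=\bigoplus_j \ku z_j$. Because $\hZ$ is $q$-commutative with a PBW-type basis of monomials, I expect $\hZ\simeq\toba(Z)$ as graded objects. For this one must check that the braiding on $Z$ is diagonal with $q_{z_iz_j}q_{z_jz_i}=1$ and $q_{z_iz_i}=1$ (or at least not a root of unity of small order), so that $\toba(Z)$ is the quantum polynomial algebra. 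This is the content of \cite{angiono-campagnolo} for the central or braided-central generators. Setting $C\coloneqq V\oplus Z$, with $Z$ of diagonal type, we get $\toba(C)\simeq\toba(V)\otimes\toba(Z)$ as graded objects, provided the braiding between $V$ and $Z$ is symmetric, i.e.\ $c^2|_{V\otimes Z}=\mathrm{id}$. Then $\toba(C)\simeq\Ec$ in $\ydg{\ku\Gamma}{\monoid}$, so $C$ is the core by Lemma~\ref{lem:unique_core}. Moreover $\dim C=\theta+k<\infty$, since $\hZ$ is affine.

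For finiteness of the root system, $\toba(C)\simeq\toba(V)\underline{\otimes}\toba(Z)$ has Dynkin diagram equal to the disjoint union of that of $V$ with $k$ isolated vertices labelled $1$ (or of Cartan type $A_1$ with generic label). Each of these vertices has finite root system. The root system of $V$ is finite by Theorem~\ref{thm:finiteGK}, since $\GK\toba(V)<\infty$, and a disjoint union of finite root systems is finite. Alternatively, $\GK\toba(C)=\GK\toba(V)+k<\infty$, and Theorem~\ref{thm:finiteGK} applies to $C$ directly.

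The main obstacle is the braiding computation. We must verify that the generators of $\hZ$ braid trivially or symmetrically with $V$ and with each other, so that $\toba(V\oplus Z)$ decomposes as a tensor product with the right graded character. If the mutual braiding is not symmetric, the plan is instead to compare Hilbert series: compute the $\N_0^\theta$-graded character of $\Ec$ as that of $\toba(V)$ times $\prod_j(1-t^{\deg z_j})^{-1}$, and show that this equals the character of $\toba(C)$ for $C=V\oplus Z$ with the braiding inherited from $\Ec$. This would use the PBW basis of $\Ec$ from \cite{angiono-campagnolo} and \cite{campagnolo}, treating the root vectors of $\hZ$ as new simple roots whose reflections keep the root system finite.
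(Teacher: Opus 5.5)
Your proposal is correct and takes essentially the same route as the paper: semisimplicity of $\Ec$ from diagonal type together with Theorem~\ref{thm:existence}, the decomposition $\Ec\simeq\toba(V)\otimes\hZ$ from \eqref{eq:exact-sequence-prenichols} and \cite{AAGMV}*{Proposition 3.6}, the explicit core $V\oplus W$ with $W$ spanned by the $q$-polynomial generators of $\hZ$, and finiteness of the root system of $\toba(V\oplus W)$ as a union of finite ones. The braiding conditions you isolate as the main obstacle, $c(z\otimes z)=z\otimes z$ and $c^2(z\otimes v)=z\otimes v$, are exactly what the paper also settles by inspecting the individual cases, so the Hilbert-series fallback is not needed; two small corrections: the generators of $\hZ$ need not be primitive in $\Ec$ (harmless, since only the graded object matters), and your hedge should read ``$q_{zz}=1$ or not a root of unity'', because a nontrivial root of unity of any order would truncate $\toba(\ku z)$.
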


\begin{proof}
By Theorems \ref{thm:eminent}, or \ref{thm:campagnolo}, we know that there is an exact sequence like \eqref{eq:exact-sequence-prenichols}. By \cite{AAGMV}*{Proposition 3.6 (d)} we get a right
$\hZ $-linear isomorphism $\Ec(V) \simeq \toba(V) \otimes \hZ$ of objects in $\ydg J{\N_0}$ for the group ring $J$ of a suitable abelian group. Moreover, the braiding of $\Ec (V)$ is of diagonal type, and hence $\Ec(V)$ is semisimple in $\yd J$.
By Theorem~\ref{thm:existence}, $\Ec$ has a unique core. Since $V\subseteq \Pc(\Ec)$, the core of $\Ec$ contains $V$. Let $W\subseteq \Ec$ be the span of the homogeneous $q$-polynomial generators of $\hZ$.
By inspection of the individual examples, it is straightforward to check that  
for each $q$-polynomial generator $z$ of $\hZ$ one has
\[ c(z\otimes z)=z\otimes z,\quad c^2(z\otimes v)=z\otimes v \]
for all $v\in V$. 
Therefore
\[ \toba (V\oplus W) \simeq \toba(V)\otimes \toba(W) \simeq \toba (V)\otimes \hZ 
\]
as $\N_0$-graded Yetter-Drinfeld modules. 
Hence $V\oplus W$ is a core of $\Ec$. The root system of $\toba(V\oplus W)$ is the union of the root system of $V$ and the root system of $W$, and hence is finite.
\end{proof}

\subsection{Noetherian Hopf algebras}

\begin{definition}
Let $\searrow $ be the transitive closure of the relation 
on the class of connected $\monoid$-graded  Hopf algebras in $\yd J$
induced by the relations
\[
A\searrow B,
\]
where $A$ is an $\monoid$-graded Hopf algebra in $\yd J$ with $A^\cero=\ku 1$ and 
$B$ is isomorphic as an $\monoid$-graded  Hopf algebra in $\yd J$ to
$\gr_{\corad}A$ or to $\gr ^{\ker \varepsilon} A$; see the discussion above 
Lemma~\ref{lem:injections}. We call the relation $\searrow$ 
the \emph{graded degeneration order}. 
\end{definition}

Here is a non-trivial example of the graded degeneration order.

\begin{example*}
Let $A=U_q^+(\mathfrak{sl}_3)$,  
where $q\in\C$ is a primitive third root of 1, with generators
$x_1$ and $x_2$ and braiding matrix $\begin{pmatrix}q^2&q^{-1}\\q^{-1}&q^2\end{pmatrix}$. Then 
$A$ is the \emph{distinguished }
pre-Nichols algebra of $V=\mathrm{span}_\ku \{x_1,x_2\}$. In particular, $A$ is $\N_0$-graded, where $x_1$ and $x_2$ have degree one. Hence $\gr^{\ker\varepsilon}A\simeq A$. 
 Let 
\[
x_{12}=x_1x_2-q^{-1}x_2x_1.
\]
Then in $A$ the relations
\[ x_1x_{12}=qx_{12}x_1,\quad 
x_{12}x_2=qx_2x_{12}\]
hold. Moreover,
$\{x_2^{a}x_{12}^bx_1^{c}:a,b,c\in\N_0\}$ is a PBW basis 
of $A$. The elements $x_1^3$ and $x_2^3$ are primitives in $A$ and 
\[
\Delta(x_{12}^3)=x_{12}^3\otimes 1+\lambda x_1^3\otimes x_2^3+1\otimes x_{12}^3
\]
for some $\lambda\in\C$. Moreover, 
$x_2^3, x_{12}^3, x_1^3$ are central in $A$. We write $y_2$, $y_{12}$, $y_1$ for the generators of $\gr_{\corad}A$  corresponding to $x_2^3$, $x_{12}^3$ and $x_1^3\in A$, respectively. It follows that
\[ 
\{y_2^{m_2}y_{12}^{m_{12}}y_1^{m_1}x_2^{n_2}x_{12}^{n_{12}}x_1^{n_1}:m_2,m_{12},m_1\in\N_0, 0\le n_2,n_{12},n_1\le 2\}
\]
is a basis of $\gr_{\corad}A$,
\[
\deg(y_2^{m_2}y_{12}^{m_{12}}y_1^{m_1}x_2^{n_2}x_{12}^{n_{12}}x_1^{n_1})=m_2+2m_{12}+m_1+n_2+2n_{12}+n_1,
\]
and
$x_2^3=x_{12}^3 =x_1^3=0$ in $\gr _{\corad} A$.

Note that $y_{12}\not\in(\ker\varepsilon)^2$. Then 
$y_{12}\in \gr^{\ker\varepsilon}\gr_{\corad}A$ is a primitive generator. In particular, we observe that 
\[
\gr^{\ker\varepsilon}\gr_{\corad}A\not\simeq \gr_{\corad}A\simeq
\gr_{\corad}\gr^{\ker\varepsilon}A,
\]
where the isomorphism $A\simeq \gr^{\ker \varepsilon}A$ holds since $A$ is a pre-Nichols algebra.
\end{example*}

\begin{theorem} \label{thm:grdegorder} Let $B$ be a Hopf algebra  in $\ydg{J}{\monoid}$ such that 
$B^\cero=\ku 1$.
Assume that $B$ is a semisimple object in $\yd J$ with a finite-dimensional core $C$. 
Then $B\searrow\toba(C)$. 
\end{theorem}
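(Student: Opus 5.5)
We reuse the sequence $(B_n)_{n\ge 1}$ from the proof of Theorem~\ref{thm:existence}: $B_1=B$, $B_{2n}=\gr_{\corad}B_{2n-1}$ and $B_{2n+1}=\gr^{\ker\varepsilon}B_{2n}$. By definition of the graded degeneration order, $B=B_1\searrow B_2\searrow B_3\searrow\cdots$; here we regard each $B_n$ as $\monoid$-graded by forgetting the $\N_0$-grading, and note that $B_n^{\cero}=\ku 1$ throughout. So $B\searrow B_m$ for every $m\ge 2$, and it suffices to find one $m$ with $B_m\simeq\toba(C)$ as $\monoid$-graded Hopf algebras in $\yd J$.

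\emph{Stabilization.} The objects $C_n=B_n^{\monoid,1}$ come with injections $\iota_n\colon C_n\hookrightarrow C_{n+1}$, by Lemmas~\ref{lem:injections} and \ref{lem:surjection} together with semisimplicity. Moreover $C_n\subseteq B_n\simeq B\simeq\toba(C)$ as $\monoid$-graded objects. Each $C_n^\alpha$ is isomorphic to a subobject of $C^\alpha$, where $C$ is the stable limit constructed in the proof of Theorem~\ref{thm:existence}; by Lemma~\ref{lem:unique_core} this $C$ is the given core. Since $C$ is finite-dimensional, only finitely many degrees $\alpha_1,\dots,\alpha_r$ have $C^{\alpha_i}\neq0$. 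For every other degree $\alpha$ we have $C_n^\alpha=0$ for all $n$. Hence there is an $N$ with $C_n\simeq C$ for all $n\ge N$; this is the point where finite-dimensionality of the core is used essentially. Take $n\ge N$ even, so $B_{2n}$ is coradically graded and $C_{2n}\simeq C_{2n+1}\simeq C$ in every degree $\beta$.

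\emph{Identification.} Apply Lemma~\ref{lem:braidedgraded} to $A=B_{2n}$ exactly as in the proof of Theorem~\ref{thm:existence}, but now for all $\alpha$ simultaneously, since the stabilization is uniform in $\alpha$. This shows that the natural injective Hopf algebra morphism $f\colon\toba(B_{2n}^{\monoid,1})\to B_{2n}$ is an isomorphism in every $\monoid$-degree. Injectivity holds because $B_{2n}$ is coradically graded, and the degreewise dimensions agree by local finiteness. Thus $f$ is bijective, and $B_{2n}\simeq\toba(C_{2n})\simeq\toba(C)$ as $\monoid$-graded Hopf algebras in $\yd J$. Here $\toba$ is functorial and $C_{2n}\simeq C$ in $\ydg{J}{\monoid}$. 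Therefore $B\searrow B_{2n}\simeq\toba(C)$.

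\emph{Main obstacle.} The delicate step is upgrading the degreewise statement of Lemma~\ref{lem:braidedgraded} to a Hopf algebra isomorphism. One must check two things. First, the map $f$ there is a morphism of Hopf algebras, which holds because $f$ is induced by the inclusion of primitives into a coradically graded Hopf algebra, so its image is the Nichols algebra. Second, a single index $2n$ works for all degrees at once, not just for $\beta\le\alpha$; this uniformity is exactly what finite-dimensionality of $C$ provides. If the first point needs more care, I would instead argue directly: $B_{2n+1}=\gr^{\ker\varepsilon}B_{2n}$ is generated in $\N_0$-degree one, and its degree-one part agrees with that of $B_{2n}$ by the stabilization. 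Hence $B_{2n}$ is generated by its degree-one part, and a connected coradically graded Hopf algebra with this property is the Nichols algebra of that part.
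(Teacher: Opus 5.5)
Your proposal is correct and follows the paper's proof. The paper's proof only says to rerun the construction from Theorem~\ref{thm:existence} and observe that $\dim C<\infty$ makes the sequence $(B_n)$ stabilize; you fill in the details, namely uniform stabilization of the $C_n$ over the finitely many degrees where $C^\alpha\neq 0$, followed by Lemma~\ref{lem:braidedgraded} applied to all $\alpha$ at once, which gives $B_{2n}\simeq\toba(C)$ as $\monoid$-graded Hopf algebras.
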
 

\begin{proof}
Follow the proof of Theorem~\ref{thm:existence}. Since $\dim C<\infty$, the sequence $(B_n)_{n\ge 1}$ in that proof stabilizes after some $n\ge 1$.
\end{proof}

\begin{prop}
\label{pro:semisimple}
Assume that $B$ is a semisimple object in $\yd J$ with a finite-dimensional core $C$. 
If $\toba(C)$ is Noetherian, then 
$B$ is Noetherian. 
\end{prop}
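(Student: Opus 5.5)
By Theorem~\ref{thm:grdegorder}, $B\searrow\toba(C)$. So there is a finite chain
\[
B=B_1,\ B_2,\ \dots,\ B_m\simeq\toba(C)
\]
in which each $B_{k+1}$ is isomorphic, as an $\monoid$-graded Hopf algebra in $\yd J$, to $\gr_{\corad}B_k$ or to $\gr^{\ker\varepsilon}B_k$. This is exactly the sequence built in the proof of Theorem~\ref{thm:existence}, which stabilizes because $\dim C<\infty$. The plan is to push Noetherianity backwards along this chain, one step at a time, using Proposition~\ref{prop:graded-Noetherian}. This requires each $B_k$ to be a filtered ring whose associated graded ring is $B_{k+1}$. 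Since $B_m\simeq\toba(C)$ is Noetherian by hypothesis, descending induction on $k$ then gives that $B_1=B$ is Noetherian. Because the antipodes are bijective, this can be done on one side only.

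The step $B_{k+1}=\gr^{\ker\varepsilon}B_k$ is the easy case. Here $B_k$ carries the descending filtration $F^{-n}=(\ker\varepsilon)^n$. To invoke Proposition~\ref{prop:graded-Noetherian}, which is stated for ascending exhaustive filtrations, I would convert it into an ascending filtration indexed by the tomonoid. Each power $(\ker\varepsilon)^n$ is an $\monoid$-graded subobject with $((\ker\varepsilon)^n)^\alpha=0$ once $n$ is large relative to $\alpha$. This holds because $\alpha$ has only finitely many predecessors by \eqref{eq:well_founded}, and every element of $\ker\varepsilon$ has positive degree. So on each $\monoid$-homogeneous component the filtration is finite.

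Concretely, I would fix a totally ordered refinement, or simply argue via the standard result that a Zariskian or complete, separated descending filtration with Noetherian associated graded ring yields a Noetherian ring. Completeness and separatedness come from local finiteness together with $\bigcap_n(\ker\varepsilon)^n=0$. A cleaner route is to note that $B_k$ and $B_{k+1}$ are both $\monoid$-graded and isomorphic as $\monoid$-graded objects. A graded left ideal of $B_k$ then determines a graded left ideal of $B_{k+1}$ via leading terms, and the resulting map from graded left ideals of $B_k$ to those of $B_{k+1}$ preserves strict inclusion. Dimensions of $\monoid$-homogeneous components are preserved, and the filtration is finite in each component, so this is the standard leading-term argument. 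Finally, it suffices to know that $\monoid$-graded left ideals satisfy ACC to deduce that $B_k$ is left Noetherian. I would deduce this from graded Noetherianity by the same leading-term trick applied to the $\monoid$-grading, which is locally finite with $\cero$ minimal.

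The step $B_{k+1}=\gr_{\corad}B_k$ uses the ascending coradical filtration, which is exhaustive, indexed by $\N_0$, and multiplicative because $B_k$ is connected. Proposition~\ref{prop:graded-Noetherian} applies directly here. I expect the main obstacle to be the previous step: handling the descending $\ker\varepsilon$-adic filtration rigorously. My first attempt will be the componentwise leading-term argument, which relies on local finiteness of the $\monoid$-grading and on \eqref{eq:well_founded}.
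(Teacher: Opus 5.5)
Your proposal is correct and follows the paper's proof. Starting from $B\searrow\toba(C)$ (Theorem~\ref{thm:grdegorder}), you lift Noetherianity back along each $\gr_{\corad}$ and $\gr^{\ker\varepsilon}$ step. Your componentwise leading-term argument then fills in the $\gr^{\ker\varepsilon}$ step, which the paper merely asserts: first a dimension count in each finite-dimensional $B^\alpha$ gives ACC for $\monoid$-graded left ideals, and then graded Noetherian implies Noetherian because $\monoid$ is well ordered by \eqref{eq:well_founded}.

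Drop the side remark that the $\ker\varepsilon$-adic filtration is complete. That is false in general, as $\ku[x]$ with the $(x)$-adic filtration shows, so the ``complete/Zariskian'' route fails as stated. Your preferred route never uses completeness, so the argument is unaffected.
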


\begin{proof}
By Theorem~\ref{thm:grdegorder}, $B\searrow \toba(C)$. For all $\monoid$-graded  Hopf algebras $A$ in $\yd J$ with $A^\cero=\ku 1$ the Noetherianity of any of the graded  Hopf algebras $\gr _{\corad}A$ and $\gr ^{\ker \varepsilon }A$ implies that $A$ is Noetherian.
This implies the claim.
\end{proof}

\begin{theorem}
\label{thm:core_noetherian}
Let $A$ be a locally finite $\monoid$-graded  Hopf algebra of diagonal type with $A^\cero=\ku 1$. Assume that 
$A$ has a finite-dimensional core $C$, and that the root system of $\toba(C)$ is finite. Then 
$A$ is Noetherian. 
\end{theorem}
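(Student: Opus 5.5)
The plan is to reduce to Proposition~\ref{pro:semisimple}: show that $A$ is a semisimple object in $\yd J$ and that $\toba(C)$ is Noetherian. The conclusion then follows because $A\searrow\toba(C)$ by Theorem~\ref{thm:grdegorder}, and Noetherianity lifts along each elementary step of the graded degeneration order by Proposition~\ref{prop:graded-Noetherian}.

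\emph{Semisimplicity.} ``Diagonal type'' for $A$ means that the braiding is diagonal on a homogeneous basis. Concretely, we may take $J=\ku\Gamma$ for a suitable abelian group $\Gamma$ (as in the proof of Corollary~\ref{cor:eminent}), with $A$ a direct sum of one-dimensional Yetter--Drinfeld modules. Hence $A$ is semisimple in $\yd J$. The core $C$, being isomorphic to a subobject of the $\gr$-construction of $A$ (built in the proof of Theorem~\ref{thm:existence}), is then also a finite-dimensional object of diagonal type. Thus $\toba(C)$ is a Nichols algebra of a finite-dimensional braided vector space of diagonal type.

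\emph{Noetherianity of $\toba(C)$.} By hypothesis, the root system of $\toba(C)$ is finite. By Corollary~\ref{cor:noeth-implies-finiteGK}, implication (iii)$\Rightarrow$(i), the algebra $\toba(C)$ is Noetherian. Behind that implication lies the argument of \cite{angiono-prenichols}*{Theorem 19}. Finiteness of the root system gives a PBW basis in finitely many root vectors, ordered convexly. The associated graded algebra, with respect to a suitable filtration, is then a quotient of a skew-polynomial-type algebra (a $q$-polynomial algebra modulo powers of some generators). That algebra is Noetherian, so $\toba(C)$ is Noetherian by Proposition~\ref{prop:graded-Noetherian}.

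\emph{Conclusion.} Proposition~\ref{pro:semisimple} now applies and gives that $A$ is Noetherian. Recall why it works: along the finite chain $A=B_1,B_2,\dots,B_N\simeq\toba(C)$ from Theorem~\ref{thm:grdegorder}, each $B_{k+1}$ is the associated graded of $B_k$. For $\gr^{\ker\varepsilon}$ the filtration is descending and separated, and by local finiteness it is finite in each $\monoid$-degree, so Proposition~\ref{prop:graded-Noetherian} still applies. For $\gr_{\corad}$ the filtration is ascending, so the same proposition applies directly.

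\emph{Main obstacle.} I expect the delicate point to be making sure the hypotheses of Theorem~\ref{thm:grdegorder} and Proposition~\ref{pro:semisimple} truly hold for $A$. Specifically, we need to check that $C$ is of diagonal type, so that Corollary~\ref{cor:noeth-implies-finiteGK} is applicable. We also need to check that the descending $\ker\varepsilon$-filtration step preserves Noetherianity. For the latter I would argue degreewise: by \eqref{eq:well_founded}, within each $\monoid$-degree the filtration $(\ker\varepsilon)^n$ becomes zero after finitely many steps. This lets one pass to an equivalent exhaustive ascending filtration indexed compatibly with $\monoid$, after which the standard Rees-type argument lifts left ideals.
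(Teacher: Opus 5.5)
Your proposal is correct and follows the paper's proof exactly. Like the paper, you argue that $A$ is semisimple because it is of diagonal type over a suitable group ring, that $\toba(C)$ is Noetherian by Corollary~\ref{cor:noeth-implies-finiteGK}, and you then apply Proposition~\ref{pro:semisimple}. Your extra remarks correctly fill in details the paper leaves implicit: that $C$ is again of diagonal type as a subobject of a direct sum of one-dimensional Yetter--Drinfeld modules, and that the $(\ker\varepsilon)$-adic step preserves Noetherianity because the filtration vanishes in each $\monoid$-degree after finitely many steps, so the usual lifting argument runs over a well-ordered set of degrees.
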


\begin{proof}
By assumption, $A$ is a semisimple object in $\ydg J \monoid$ for an appropriate group ring $J$. Since 
the root system of $\toba(C)$ is finite, by Corollary~\ref{cor:noeth-implies-finiteGK}, $\toba(C)$ is Noetherian. 
The claim follows from Proposition~\ref{pro:semisimple}.
\end{proof}

Now we can give a proof of Theorem~\ref{thm:Noetherian-finiteGK-diagtype} without 
using Lemma \ref{prop:gener-Lusztig-Noetherian}. 

\begin{proof}[Proof of Theorem~\ref{thm:Noetherian-finiteGK-diagtype}] 
We know that $\Gamma$ is nilpotent-by-finite.
Since $H$ is pointed and $R$ is affine, $\GK R < \infty$
by Theorem \ref{thm:zhuang}; \emph{a fortiori} 
\[
\GK \toba(V) < \infty.
\]
Let $\Ec$ be the eminent pre-Nichols algebra
of $V$. By Corollary~\ref{cor:eminent}, $\Ec$ has a finite-dimensional core $C$ and $\toba(C)$ has a finite root system. By Theorem~\ref{thm:core_noetherian}, $R$ is Noetherian (and hence \ref{item:GK-R-finita} implies \ref{item:R-Noetherian}).   
Then Lemma \ref{lem:Noetherian-finiteGK} implies that $H$ is Noetherian. 
\end{proof}

Now we formulate a partial generalization of 
Corollary~\ref{cor:noeth-implies-finiteGK}.

\begin{cor}
\label{cor:core_noetherian}
Let $A$ be a locally finite $\monoid$-graded  Hopf algebra of diagonal type with $A^\cero=\ku 1$. Assume that $A$ has a finite-dimensional core $C$. Then $A$ has finite $\GK$ if and only if the root system of $\toba (C)$ is finite. Moreover, if $\GK A<\infty$ then $A$ is Noetherian.
\end{cor}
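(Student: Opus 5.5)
The plan is to reduce both assertions to statements about $\toba(C)$, via the fact that $A$ and $\toba(C)$ have the same $\monoid$-graded Hilbert series, and then quote Theorem~\ref{thm:finiteGK} together with Theorem~\ref{thm:core_noetherian}.

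\emph{Step 1: $\GK A = \GK \toba(C)$.} Since $C$ is a core, $A\simeq\toba(C)$ as objects in $\ydg{J}{\monoid}$, so $\dim A^\alpha=\dim\toba(C)^\alpha$ for every $\alpha\in\monoid$. To compare Gelfand--Kirillov dimensions, I would not compare the two algebras directly, since a vector space isomorphism says nothing about multiplication. Instead I would use the chain of degenerations $A=B_1,B_2,\dots$ from the proof of Theorem~\ref{thm:existence}. Because $\dim C<\infty$, Theorem~\ref{thm:grdegorder} shows this chain stabilizes at $\toba(C)$ after finitely many steps. Each step replaces a filtered algebra by its associated graded algebra, so Proposition~\ref{prop:kl-graded-filtered} gives $\GK B_n\ge \GK B_{n+1}$, hence $\GK A\ge\GK\toba(C)$. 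Equality holds at each step once the graded algebra is finitely generated. Since $\dim C<\infty$, $\toba(C)$ is affine, and I would argue inductively backwards along the chain that each $B_n$ is affine. Lifting homogeneous generators of $\gr$ to $B_n$ generates $B_n$, using that the filtrations are exhaustive and, by \eqref{eq:well_founded}, finite in each $\monoid$-degree. This gives $\GK A=\GK\toba(C)$.

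\emph{Step 2: the root system criterion.} The core $C$ is of diagonal type, being a subquotient of $A$ in a category where $A$ is of diagonal type. By Theorem~\ref{thm:finiteGK}, $\GK\toba(C)<\infty$ if and only if the root system of $\toba(C)$ is finite. Combined with Step 1, this gives the first assertion.

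\emph{Step 3: Noetherianity.} If $\GK A<\infty$, then by Step 2 the root system of $\toba(C)$ is finite, and Theorem~\ref{thm:core_noetherian} yields that $A$ is Noetherian.

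The main obstacle is the equality of $\GK$ in Step 1, specifically the transfer of affineness through the $\gr^{\ker\varepsilon}$ and $\gr_{\corad}$ steps. If the generator-lifting argument turns out delicate for the coradical filtration, I would try a growth-function argument instead. One could bound $\dim$ of the span of words of length $\le n$ in a finite generating set by the Hilbert series of $A$ in $\monoid$-degrees below some $\alpha(n)$, and compare with $\toba(C)$, whose PBW basis with finitely many generators gives polynomial growth. For the converse direction the inequality $\GK A\ge\GK\toba(C)$ from Proposition~\ref{prop:kl-graded-filtered} suffices without any affineness.
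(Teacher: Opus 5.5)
Your outline matches the paper's: Steps~2 and~3 are exactly its argument, and your remark that $C$ is of diagonal type, which the paper omits, is correct. The difference is in Step~1, and one step there does not go through as written.

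Proposition~\ref{prop:kl-graded-filtered} is about ascending exhaustive filtrations such as the coradical one, so it does handle the $\gr_{\corad}$ steps. The $\gr^{\ker\varepsilon}$ steps use the descending $\ker\varepsilon$-adic filtration, and for descending filtrations the equality clause is false in general. For example, $\ku[[x]]$ with the $x$-adic filtration has $\gr\simeq\ku[x]$, which is affine of $\GK$ one. But $x$ and $e^x$ are algebraically independent in $\ku[[x]]$, so $\GK\ku[[x]]\ge 2$. The inequality $\GK\gr\le\GK$ still holds for descending filtrations by the same telescoping argument, so your `only if' direction and the `Moreover' part are fine. The `if' direction, however, needs $\GK A\le\GK\toba(C)$, that is, equality at every $\gr^{\ker\varepsilon}$ step. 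You use the $\monoid$-degreewise finiteness of that filtration only to lift generators, not to control growth, so this step is unsupported as written.

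The fix is the observation you set aside, that ``a vector space isomorphism says nothing about multiplication.'' Take a connected, locally finite $\monoid$-graded algebra generated by finitely many homogeneous elements of nonzero degree. The span of words of length $\le n$ is squeezed between sums of homogeneous components over degree sets that grow linearly in $n$; when $\monoid=\N_0^\theta$ one can measure this by total degree. Hence $\GK$ depends only on the Hilbert series.

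This is the paper's whole proof: $A$ and $\toba(C)$ are both affine and isomorphic as $\monoid$-graded spaces, so they have the same $\GK$. Your backward lifting of generators along the finite chain $A\searrow\cdots\searrow\toba(C)$ from Theorem~\ref{thm:grdegorder} is worth keeping. It is exactly the justification for ``$A$ is affine'' that the paper leaves implicit. Once you have it, though, applying Proposition~\ref{prop:kl-graded-filtered} step by step is unnecessary, and at the $\gr^{\ker\varepsilon}$ steps it would have to be replaced by this Hilbert-series argument anyway. Your fallback growth-function argument is essentially that argument, and it closes the gap.
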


\begin{proof}
Since $A$ has a finite-dimensional core $C$, both $A$ and $\toba(C)$ are affine. Thus $A$ and $\toba (C)$ have the same $\GK $, which only depends on the dimensions of the homogeneous components. Thus the first part of the claim follows from Theorem~\ref{thm:finiteGK}.

Thus the second part of the claim follows from Theorem~\ref{thm:core_noetherian}.
\end{proof}

% \begin{question}\label{question:noeth_implies_finiteGK_diag} Does in Corollary~\ref{cor:core_noetherian} follow from Noetherianity of $A$ that $\GK A<\infty$?
% \end{question}

\section{Pointed Hopf algebras: what remains to be done}\label{sec:remains}
We keep the previous notation, namely  $\Gamma$ is a polycyclic-by-finite group,
$V \in \yd{\ku \Gamma}$
and $R$ is a post-Nichols algebra of  $V$. 

\subsection{Post-Nichols algebras}
A few delicate points need to be clarified.
We already stated Question \ref{question:graded-Noetherian}.

\begin{problem} 
Could Question \ref{question:graded-Noetherian} be answered, 
assuming that Conjecture \ref{conj:sierra-walton} holds? 
\end{problem}

\begin{problem} \label{problem:R-affine}
Assume that $R$ is the diagram of a pointed Noetherian affine Hopf algebra $H$. 
Under what conditions is $R$ affine? Is $\car \ku = 0$ enough?
\end{problem}

\begin{example}\label{exa:R-not affine} Suppose that $\car \ku = p > 0$.
Let $H = \ku[T]$ be the polynomial algebra with the usual Hopf algebra structure, i.e., 
$T$ is primitive. Then $\gr H = R$ is neither affine nor Noetherian.
\end{example}

\pf Clearly, $H$ is pointed,  $G(H)$ is trivial and the space of primitives is
\begin{align*}
\Pc(H) &= \sum_{j\in \N_{\geq 1}} \ku T^{P^j}.
\end{align*}
Set $y_j = $ class of $T^{P^j}$ in $H_1/H_0$. Then 
$(y_j)_{j\in \N_{\geq 1}}$ is a basis of $\gr^1 H$. We claim that $\gr H$ is not affine.

Indeed, suppose that $(z_i)_{i \in \I}$ is a finite set of generators of $R$;
we may assume that the $z_i$'s are homogeneous, hence those of them having degree 1 will form a
basis of $\gr^1 H$, a contradiction. Analogously, the ideal 
\[
R^+ = \bigoplus_{j\in \N_{\geq 1}} R^j
\]
could not be finitely generated, so $R$ is not Noetherian.
\epf

\begin{example}\label{exa:R-not affine2} Here $\car \ku = 0$.
 Let $H = T(V)$ be the tensor algebra of a finite dimensional vector space $V$,
 with the usual Hopf algebra structure, i.e., the elements of
 $V$ are primitive. Then, $H \simeq U(L(V))$, the enveloping algebra of the
 free Lie algebra generated by $V$. Clearly $H$ is affine; here the coradical filtration
is the PBW filtration and $\gr H \simeq S(L(V))$ is not affine provided that $\dim V > 1$.
\end{example}

\begin{problem}
If $V$ admits an eminent pre-Nichols algebra $\Ec$,
is its graded dual $\Ec^d$ Noetherian? 
\end{problem}

\begin{problem}
Let $V$ be of Cartan type either $A_{\theta}$ or  $D_{\theta}$ with label $q=-1$.
Does it admit an eminent pre-Nichols algebra?
\end{problem}

Corollary \ref{cor:noeth-implies-finiteGK} establishes 
\ref{item:R-Noetherian} $\implies$ \ref{item:GK-R-finita}
for Nichols algebras of diagonal type. 

\begin{problem}
Do Noetherian post-Nichols algebras of diagonal type have finite $\GK$? 
\end{problem}

\subsection{Post-Nichols  algebras over abelian groups}
The classification of the Nichols algebras over abelian groups with finite $\GK$
is not complete. The significant class corresponding to direct sums of blocks and points
was determined in \cite{AAH2}; but this does not exhaust the possible ones.
See \cite{AAH2}*{Section 9} and \cite{AAM}.
All the Nichols algebras listed there are Noetherian.
However the pre-Nichols algebras with finite $\GK$
are not known except for the Jordan plane--which is
Noetherian and the only pre-Nichols with finite $\GK$; see \cite{AAH1}. 
This represents then a source of problems, starting with:

\begin{problem}
Which Nichols algebras of blocks are Noetherian? 
\end{problem}

\subsection{Nilpotent groups} Fix a finitely generated nilpotent group $\varGamma$.
Given $M\in \yd{\ku \varGamma}$ such that $\toba(M)$ is Noetherian, then $M$ is finite dimensional
by Lemma \ref{lemma:graded-Noetherian}, hence the support $\mathcal O$ of $M$ is a finite union
of finite conjugacy classes. There is also a reduction for finite $\GK$ as we now recall.  

\begin{theorem}\label{thm:infinite-conjugacy-class} \cite{andrus-nilpotent}*{Theorem 2.6}.
Let $M\in \yd{\ku \varGamma}$ such that $\mathcal O \coloneqq \supp M$
is a single conjugacy class.
If $\GK \toba(M) \# \ku \varGamma < \infty$, then $\mathcal O$ is finite.
\end{theorem}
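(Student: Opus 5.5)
Assume $\mathcal O$ is infinite; the plan is to derive $\GK \toba(M)\#\ku\varGamma=\infty$. The finitely generated nilpotent group will be used to locate inside $\mathcal O$ an infinite family of pairwise commuting elements. The Nichols algebra of the corresponding braided subspace will then be shown to be "too large". Note that $\GK \toba(M)\#\ku\varGamma \ge \GK \toba(W)$ for every braided subspace $W\subseteq M$, since $\toba(W)\hookrightarrow\toba(M)$ for any such $W$ (functoriality of $\toba$ on braided subspaces). So it suffices to find, for each $N$, a braided subspace $W_N$ with $\GK\toba(W_N)\ge N$ or of infinite $\GK$.

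\emph{Step 1 (group theory).} Fix $g\in\mathcal O$. Let $Z_0\le Z_1\le\dots\le Z_c=\varGamma$ be the upper central series, and choose $k$ minimal such that the image of $\mathcal O$ in $\varGamma/Z_k$ is finite. Replacing $\varGamma$ by the finite-index subgroup $\varGamma'$ of elements centralising $gZ_k$, the $\varGamma'$-class of $g$ is still infinite and lies in $gZ_k$. I would then find $x\in\varGamma$ such that $z\coloneqq x g x^{-1} g^{-1}\in Z_k$ has infinite order and commutes with $g$ and $x$ modulo the relevant smaller central term. Refining this, possibly by induction on $k$, should give that the elements
\[
g_n\coloneqq x^n g x^{-n} = z^{n} g \cdot(\text{correction in } Z_{k-1}),\qquad n\in\N_0,
\]
are pairwise distinct and pairwise commuting. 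This is routine but fiddly.

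\emph{Step 2 (braided subspaces).} Put $W_N=\bigoplus_{n\le N} M_{g_n}$. Since the $g_n$ commute pairwise, $g_i\cdot M_{g_j}=M_{g_ig_jg_i^{-1}}=M_{g_j}$, so $W_N$ is a braided subspace of $M$. The operators $g_i|_{M_{g_j}}$ commute, so choose a common eigenvector $v_j\in M_{g_j}$ for all of them, which is possible because $\ku$ is algebraically closed. The flag given by simultaneous triangularisation yields a filtration whose associated graded braided space contains $U_N=\operatorname{span}\{v_0,\dots,v_N\}$ of diagonal type with matrix $q_{ij}$ (the eigenvalue of $g_i$ on $v_j$). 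Comparing Hilbert series (the Nichols algebra of the associated graded braided vector space has Hilbert series bounded by that of $\toba(W_N)$ from below, by the standard filtration argument) gives $\GK\toba(W_N)\ge\GK\toba(U_N)$.

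\emph{Step 3 (diagonal type).} Conjugation by $x$ permutes the $M_{g_n}$ as $n\mapsto n+1$. Hence, after choosing the $v_n$ compatibly via $v_{n+1}=x\cdot v_n$, the matrix satisfies $q_{i+1,j+1}=q_{ij}$; in particular $q_{ii}=q$ is constant and $q_{ij}q_{ji}$ depends only on $|i-j|$. By Theorem~\ref{thm:finiteGK}, finite $\GK$ forces a finite root system. The finite-root-system classification of \cite{heckenberger-adv} allows connected components of bounded rank only, except for very restricted families. Translation invariance then forces either infinitely many $q_{ij}q_{ji}\neq1$ (which gives an infinite root system for large $N$, hence infinite $\GK$) or $U_N$ of essentially totally disconnected type. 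In the latter case, if $q\ne -1$ (or $q$ not a root of unity) each vertex contributes $\GK\ge1$, giving $\GK\ge N+1$ for all $N$ and hence infinite $\GK$.

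\emph{Main obstacle.} The hard case is $q=-1$ with $q_{ij}q_{ji}=1$, or more generally when all diagonal pieces are finite-dimensional. Here the diagonal Nichols algebras are exterior algebras, and the argument must extract growth differently. There I would use the $x$-action inside $\toba(M)\#\ku\varGamma$: if $\mathcal O$ is infinite but all $\toba(W_N)$ are finite, I would look at elements such as the products $v_0v_n$ together with the group-likes $g_n$. The aim is to show that the group-likes generated by $\mathcal O$ together with these products generate a subalgebra of growth exceeding that of any finitely generated nilpotent group. Equivalently, one would show that the exponent of growth exceeds $\GK\ku\varGamma$ by an unbounded amount. This is the step I expect to be the genuine difficulty, and it may require choosing Step 1 more cleverly, for instance producing noncommuting pairs $g,h\in\mathcal O$ whose two-dimensional braided subspace has infinite $\GK$.
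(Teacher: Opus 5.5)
You have a genuine gap, and it sits exactly where you put the ``main obstacle''. That case is not a corner case; it is the heart of the theorem. (The paper does not prove this statement but quotes it from \cite{andrus-nilpotent}*{Theorem 2.6}, so I judge your argument on its own.) Your reduction uses only $\GK \toba(M)\#\ku\varGamma\ge\sup_N\GK\toba(W_N)$, and this supremum can be $0$ even when $\mathcal O$ is infinite. Take the integral Heisenberg group $\langle a,b\rangle$ with $c=[a,b]$ central. The class of $a$ is $\{c^na: n\in\mathbb Z\}$. Let $M$ be induced from the character $\rho$ of $C_\varGamma(a)=\langle a,c\rangle$ with $\rho(a)=-1$, $\rho(c)=1$. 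Then every element of $\mathcal O$ acts by $-1$ on $M$, so $\toba(M)=\Lambda(M)$ and every finitely generated subalgebra of $\toba(M)$ is finite-dimensional. Hence no argument through Nichols algebras of braided subspaces alone can succeed; the group-likes must enter.

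Several other steps also fail as stated. The dichotomy in Step 3 is unjustified: a translation-invariant matrix of Cartan type $A_{N+1}$, with $q_{ij}q_{ji}\ne1$ exactly for $|i-j|=1$, has a finite root system for every $N$, and for $q$ a root of unity every $\toba(U_N)$ is finite-dimensional. Your fallback with non-commuting $g,h\in\mathcal O$ fails, because $M_g\oplus M_h$ is then not a braided subspace, and in the example above all of $\mathcal O$ commutes anyway. The common eigenvectors in Step 2 need $\dim M_g<\infty$, which is not assumed.

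The missing idea is a direct growth count in $\toba(M)\#\ku\varGamma$, which makes Steps 2 and 3 superfluous. Given $x$ with $g_n=x^ngx^{-n}$ pairwise distinct and commuting, take any $0\ne v_0\in M_g$ and put $v_n=x^n\cdot v_0=x^nv_0x^{-n}\in M_{g_n}$. For $S=\{j_1<\dots<j_k\}\subseteq\{0,\dots,N\}$, the product $v_S=v_{j_1}\cdots v_{j_k}=x^{j_1}v_0x^{j_2-j_1}v_0\cdots v_0x^{-j_k}$ is a word of length at most $3N+1$ in $x^{\pm1}$ and $v_0$.

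These $2^{N+1}$ elements are linearly independent. The subspace $W_N=\bigoplus_{n\le N}M_{g_n}$ satisfies $c(M_{g_i}\otimes M_{g_j})=M_{g_j}\otimes M_{g_i}$, so $\toba(W_N)\subseteq\toba(M)$ is $\N_0^{N+1}$-graded and the $v_S$ have distinct degrees. Each $v_S$ is nonzero, by induction on $k$. The $(k-1,1)$-component of $\Delta(v_S)$ has the form $\sum_i(\cdots)\otimes v_{j_i}$. Apply a functional supported on $M_{g_{j_k}}$ taking the value $1$ at $v_{j_k}$; it returns $v_{j_1}\cdots v_{j_{k-1}}$, which is nonzero by induction. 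So $\ku\langle x^{\pm1},v_0\rangle$ has exponential growth and $\GK=\infty$, for every braiding.

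Step 1 still has to be proved. Note that $[x,g]$ cannot be taken central in general. In $\langle t\rangle\ltimes\mathbb Z^3$, with $t$ acting by a unipotent Jordan block and $g=e_2$, no nontrivial $[x,g]$ is central, although the conjugates $t^ngt^{-n}$ all lie in the abelian $\mathbb Z^3$. In the torsion-free case the step is short. Choose $k$ minimal with $[Z_k,g]\ne1$ and $x\in Z_k$ with $[x,g]\ne1$. Then $g$ centralizes $Z_{k-1}\ni[x^n,g]$, so $g$ commutes with every $g_n=[x^n,g]g$. Conjugating by powers of $x$, the $g_n$ then commute pairwise, and they are distinct by uniqueness of roots. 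The finite torsion subgroup requires an extra argument, for instance replacing $x$ by a suitable power.
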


The study of Nichols algebras over torsion free nilpotent groups reduces to the abelian case.

\begin{theorem}\cite{andrus-nilpotent}*{Theorem 3.5}\label{thm:main-nichols-nilpotent-torsionfree}
Assume that $\varGamma$ is torsion-free.  
Let $M \in \yd{\ku \varGamma}$ be semisimple of finite length.
Then 
\begin{align*}
\GK \toba(M) \# \ku \varGamma < \infty
\end{align*}
if and only if $\supp M \subset Z(\varGamma)$ and $M$ is as
in  \cite{andrus-nilpotent}*{Proposition 3.4}. \qed
\end{theorem}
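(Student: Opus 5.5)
The statement is Theorem~\ref{thm:main-nichols-nilpotent-torsionfree}, quoted from \cite{andrus-nilpotent}. The plan is to reduce to the abelian case in two steps: first show that the support must be central, then use centrality to split off $\ku\varGamma$ as a torsion-free abelian group acting diagonally.

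\textbf{Reduction to simple summands.} Write $M=\bigoplus_{i} M_i$ with each $M_i$ simple; the support $\supp M_i$ of each summand is a single conjugacy class. Since $\toba(M_i)\#\ku\varGamma$ is a Hopf subalgebra of $\toba(M)\#\ku\varGamma$, finite $\GK$ passes to it. Theorem~\ref{thm:infinite-conjugacy-class} then shows that every $\mathcal O_i=\supp M_i$ is finite.

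\textbf{Finite conjugacy classes are central.} Take $g\in\mathcal O_i$. A finite class means the centralizer $C_\varGamma(g)$ has finite index in $\varGamma$. To conclude $g\in Z(\varGamma)$, I would use the following facts about finitely generated torsion-free nilpotent groups:
\begin{itemize}
\item the FC-center is central;
\item equivalently, if $[\varGamma:C_\varGamma(g)]<\infty$, then $h^n g h^{-n}=g$ for some $n$ and every $h$, and by unique extraction of roots in torsion-free nilpotent groups $hgh^{-1}=g$.
\end{itemize}
Hence $\supp M\subset Z(\varGamma)$.

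\textbf{The abelian case.} Now each $M_i$ is a $\varGamma$-module graded by central elements, so the braiding is $c(x\otimes y)=g\cdot y\otimes x$ with $g$ central. I would then pass to the abelian group generated by the support, together with the image of the action, and compare with the setting of \cite{AAH2}.
\begin{itemize}
\item If $\varGamma$ acts semisimply, the braiding is diagonal and Theorem~\ref{thm:finiteGK} applies.
\item Otherwise one gets blocks, handled by the classification in \cite{AAH2}, which the description in \cite{andrus-nilpotent}*{Proposition 3.4} encodes.
\end{itemize}
The $\GK$ comparison uses Theorem~\ref{thm:zhuang}: $\GK\toba(M)\#\ku\varGamma=\GK\toba(M)+\GK\ku\varGamma$, and $\GK\ku\varGamma<\infty$ by Gromov/Bass. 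So finiteness reduces to $\GK\toba(M)<\infty$, which depends only on the braided vector space. The converse direction follows from the same additivity.

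\textbf{Main obstacle.} The hardest step is this last translation: matching the central-support braided vector spaces exactly with the list of \cite{andrus-nilpotent}*{Proposition 3.4}. In particular, one must handle the non-diagonalizable $\varGamma$-actions arising from the torsion-free group and use the fact that, even though the full classification over abelian groups is incomplete, torsion-freeness excludes the missing cases.
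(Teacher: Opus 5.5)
The paper gives no proof of this statement; it is quoted from \cite{andrus-nilpotent}. Your outline therefore has to stand on its own.

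Your first two steps are sound. Each simple summand has finite support by Theorem~\ref{thm:infinite-conjugacy-class}. A finite conjugacy class in a torsion-free nilpotent group is central: some power $h^n$ centralizes $g$, so $(g^{-1}hg)^n=h^n$, and uniqueness of roots gives $g^{-1}hg=h$.

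\textbf{The gap is in the last step.} Theorem~\ref{thm:zhuang} needs $\toba(M)$ affine, and Theorem~\ref{thm:finiteGK} concerns finite-dimensional braided vector spaces. Both require $\dim M<\infty$, and you never establish it. Here $M$ is not assumed finite-dimensional; otherwise Theorem~\ref{thm:infinite-conjugacy-class} would be vacuous. After your reduction, each $M_g$ is only a finite sum of simple $\ku\varGamma$-modules, and these may be infinite-dimensional. In that case your claim that finiteness of $\GK$ ``depends only on the braided vector space'' is false.

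Here is a counterexample to that claim.
\begin{itemize}
\item Let $\varGamma=\mathcal H\times\langle t\rangle$, where $\mathcal H=\langle a,b\rangle$ is the Heisenberg group.
\item Let $N=\ku[s^{\pm1}]$ with $a\cdot f(s)=f(qs)$ and $b\cdot f=sf$, where $q$ is not a root of unity. Let $t\cdot f=-f$, and take the coaction $f\mapsto t\otimes f$.
\item Then $N$ is simple in $\yd{\ku\varGamma}$ with central support $\{t\}$. Its braiding is $-\tau$, where $\tau$ is the flip.
\item Hence $\toba(N)=\Lambda(N)$, which is locally finite, so $\GK\toba(N)=0$.
\item Now take $x=1\in N$ and consider the $2^{n+1}$ words $y_0\,b\,y_1\,b\cdots b\,y_n$ with $y_j\in\{1,x\}$. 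Each equals $\bigl(\bigwedge_{j\in S}s^j\bigr)b^n$, where $S=\{j: y_j=x\}$.
\item These elements are linearly independent and lie in $(\ku+\ku x+\ku b)^{2n+1}$. So $\GK\toba(N)\#\ku\varGamma=\infty$.
\end{itemize}
Consequently, for the ``only if'' direction you need a separate argument that every simple summand is finite-dimensional. When the support element acts on a summand by $-1$, this cannot be detected inside $\toba(M)$ at all; it needs a growth argument in the bosonization, as above. For the ``if'' direction, finite-dimensionality has to be part of the description you invoke.

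\textbf{The obstacle you single out does not exist.} Since $M$ is semisimple and $\supp M\subset Z(\varGamma)$, each $g_i\in\supp M$ acts by a scalar on every simple summand $M_j$ (Schur's lemma, once $\dim M<\infty$). So $(M,c)$ is of diagonal type, however the non-central elements of $\varGamma$ act. Blocks come from non-semisimple Yetter--Drinfeld modules and cannot appear here. Once finite-dimensionality is settled, Theorem~\ref{thm:finiteGK} together with Heckenberger's list answers the question completely. Torsion-freeness is used only to make finite conjugacy classes central, not to exclude unclassified cases over abelian groups.
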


By inspection, we conclude that the Nichols algebras $\toba(M)$
with $M$ as in  \cite{andrus-nilpotent}*{Proposition 3.4} are Noetherian.

\begin{problem}
Assume that $\varGamma$ is torsion-free.  
Let $M \in \yd{\ku \varGamma}$ be semisimple of finite length
such that $\toba(M)$ is Noetherian. Is $\GK \toba(M) < \infty$?
\end{problem}

We refer to \cite{andrus-nilpotent}*{\S3} for a discussion of the
Nichols algebras in $\yd{\ku \varGamma}$ with finite $\GK$ under the assumption that
the torsion subgroup of $\varGamma$ has order coprime to $6$.
This discussion presupposes a positive answer to 
\cite{andrus-nilpotent}*{Conjecture 1.13}---a conjecture that seems presently out of reach.
Nevertheless, all Nichols algebras with finite $\GK$ appearing in these partial results
are  Noetherian.

\section{Beyond pointed}\label{sec:chevalley}

\subsection{Hopf algebras with the Chevalley property} 

Naturally, one may wonder to what extent the same thoughts for 
pointed Hopf algebras can be extrapolated to Hopf algebras with the 
Chevalley property, i.e., whose coradical is a Hopf subalgebra. 
We start recalling a basic result of Takeuchi.

\begin{theorem} \cite{Takeuchi-1972}*{Theorem 3.2}
Let  $N$ be a Hopf subalgebra of  a Hopf algebra $J$
such  that $J_0  \subseteq G(J)N$. Then
$J$ is a faithfully flat $N$-module.
\end{theorem}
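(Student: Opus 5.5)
Takeuchi's theorem states: if $N$ is a Hopf subalgebra of $J$ with $J_0 \subseteq G(J)N$, then $J$ is a faithfully flat (left and right) $N$-module. My plan follows Takeuchi's strategy via quotient coalgebras and relative Hopf modules.

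\textbf{Step 1 (cosemisimple quotient).} Set $I = JN^+$ and $\bar J = J/JN^+$, a left $J$-module coalgebra with projection $\pi\colon J \to \bar J$. The first step is to show that $\bar J$ is cosemisimple, with coradical spanned by the images of the group-likes. The coradical filtration of $J$ maps onto a coalgebra filtration of $\bar J$, so $\bar J_0 \subseteq \pi(J_0)$. By hypothesis $J_0 \subseteq G(J)N$, and $gN \subseteq g\ku + gN^+$ with $gN^+ \subseteq JN^+$. Hence $\pi(J_0)$ is spanned by the group-likes $\pi(g)$, so $\bar J$ is pointed, and its coradical is $\ku[\pi(G(J))]$.

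\textbf{Step 2 (structure of $\bar J$-comodules).} Consider the category ${}_N\mathcal M^{J}$, or dually $\mathcal M^{\bar J}$-type relative Hopf modules. We want the functor $M \mapsto M \otimes_N \ku$ from $(J,N)$-Hopf modules to $\bar J$-comodules to be an equivalence, with inverse $W \mapsto J\,\square_{\bar J}\, W$. Takeuchi's criterion reduces this to showing that $J$ is injective as a $\bar J$-comodule, equivalently that $\bar J$-coinvariants behave well. The plan is to reduce to the coradical. Since $\bar J_0$ is a group-like coalgebra, injectivity can be checked on the simple comodules $\ku\pi(g)$. Then $J\,\square_{\bar J}\,\ku\pi(g) = gN$ after translating by $g$, using that left multiplication by $g$ is a coalgebra automorphism compatible with $\pi$. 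This gives $J^{\mathrm{co}\bar J}=N$, and $J$ is a cofree-type $\bar J$-comodule over its "socle" pieces $gN$, filtered by the coradical filtration of $\bar J$.

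\textbf{Step 3 (conclusion).} Injectivity plus the equivalence yields $J \simeq N \otimes \bar J$-like decompositions locally: $J$ is a directed union of $N$-submodules that are projective, and faithful coflatness of $J$ over $\bar J$ gives, via the equivalence, that $J$ is faithfully flat over $N$. Flatness comes from exactness of $J\,\square_{\bar J}-$, and faithfulness from $J\otimes_N \ku = \bar J \ne 0$ together with the equivalence. The right-module version follows by applying the antipode or the symmetric argument with $N^+J$.

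\textbf{Main obstacle.} I expect the hardest part to be Step 2: proving that $J$ is an injective $\bar J$-comodule using only $J_0 \subseteq G(J)N$. I would argue by induction on the coradical filtration of $\bar J$, using that an extension of injectives in a locally finite comodule category over a pointed coalgebra splits, and using the translates $gN$ to show that the socle of $J$ as a $\bar J$-comodule is a direct sum of injective hulls.
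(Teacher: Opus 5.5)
The paper gives no proof of this statement; it only quotes Takeuchi. Your outline therefore has to stand on its own, and it has a genuine gap exactly where the hypothesis should do its work.

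\textbf{Where the outline breaks.} Step 1 is correct apart from its label: $\bar J$ is pointed, not cosemisimple. This follows because $\bar J_0\subseteq\pi(J_0)$ and $\pi(gn)=\varepsilon(n)\pi(g)$. Step 2, however, never shows that $J$ is an injective right $\bar J$-comodule.
\begin{enumerate}[leftmargin=*,label=\rm{(\alph*)}]
\item Checking injectivity on the simple comodules $\ku_x$, $x\in G(\bar J)$, means proving $\operatorname{Ext}^1_{\bar J}(\ku_x,J)=0$. The cotensor product $J\,\square_{\bar J}\,\ku_x$ only computes $\operatorname{Hom}^{\bar J}(\ku_x,J)$. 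Your remarks about extensions of injectives, and a socle that is a sum of injective hulls, are not an argument.
\item Translating by $g$ only reduces $J\,\square_{\bar J}\,\ku_{\pi(g)}=gN$ to the case $g=1$, that is, to $J^{\mathrm{co}\,\bar J}=N$. That equality is the real content, and it is simply asserted. A priori only $N\subseteq J^{\mathrm{co}\,\bar J}$ holds; equality is what faithfully flat descent gives once flatness is known, so it cannot be used as an input.
\item Step 3 needs a descent result of Takeuchi--Schneider type, passing from faithful coflatness over $\bar J$ to faithful flatness over $J^{\mathrm{co}\,\bar J}$. Even granting that result, you still need the missing identification of the coinvariants with $N$.
\end{enumerate}
In short, after Step 1 the hypothesis $J_0\subseteq G(J)N$ is never used in any step you actually verify.

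\textbf{A direct argument that closes the gap.} Radford's filtration argument avoids coflatness altogether and even gives freeness.
\begin{enumerate}[leftmargin=*,label=\rm{(\roman*)}]
\item Put $F_n=J_nN$, so $J=\bigcup_n F_n$. Since $\Delta(J_n)\subseteq J_n\otimes J_0+J_{n-1}\otimes J$, the coproduct makes $F_0$ and each $F_n/F_{n-1}$ a right $N$-module with a compatible right coaction of the subcoalgebra $D=\sum_{g\in G(J)}gN$. This is where the hypothesis enters: it guarantees $J_0N\subseteq D$.
\item The sum defining $D$ is direct over the cosets $G(J)/G(N)$, because a simple subcoalgebra of $xN\cap yN$ forces $x^{-1}y\in N$.
\item Hence each such object splits as $\bigoplus_x M_x$, with $M_x$ an $N$-submodule and an $xN$-comodule. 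Twisting the coaction by $x^{-1}$ makes $M_x$ a Hopf module in $\mathcal M^N_N$, which is free by the fundamental theorem of Hopf modules.
\item An exhaustive filtration with free subquotients splits. So $J$ is a free, hence faithfully flat, right $N$-module.
\item On the left, use $NJ_n$ together with $J_0=S(J_0)\subseteq NG(J)$, which holds because the antipode is bijective.
\end{enumerate}
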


\begin{cor}\label{cor:coradical-noetherian}
The coradical of a Noetherian 
Hopf algebra  with the Chevalley property is Noetherian.
\end{cor}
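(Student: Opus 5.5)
The plan is to apply Takeuchi's theorem with $J$ the given Noetherian Hopf algebra and $N = J_0$ its coradical. Combined with the general principle recalled at the beginning of Section~\ref{sec:preliminaries}, this gives the result: a subalgebra over which a left Noetherian algebra is faithfully flat is itself left Noetherian.

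First I check that $N = J_0$ meets the hypotheses of Takeuchi's theorem. By the Chevalley property, $J_0$ is a Hopf subalgebra of $J$. The inclusion $J_0 \subseteq G(J)N$ is trivial, since $1 \in G(J)$ gives $J_0 = 1\cdot J_0 \subseteq G(J)J_0$. I would also note that $J$ has bijective antipode by the standing convention for $J$. If the cited form of Takeuchi's theorem needs it, the antipode of $N$ is bijective as well, since $N$ is cosemisimple.

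Takeuchi's theorem then shows that $J$ is faithfully flat as a (left) $J_0$-module. Now let $I \subseteq J_0$ be a left ideal. Faithful flatness gives $J I \cap J_0 = I$; this is the standard consequence that $J\otimes_{J_0}(J_0/I) \neq 0$ detects when $I \neq J_0$, applied to the injection $J_0/I \to J/JI$. Hence an ascending chain $I_1 \subseteq I_2 \subseteq \cdots$ of left ideals of $J_0$ yields an ascending chain $JI_k$ of left ideals of $J$. This chain stabilizes, and contracting to $J_0$ shows that the original chain stabilizes too. So $J_0$ is left Noetherian. The right-handed version follows in the same way from the right-module form of Takeuchi's theorem, or by applying the antipode, which maps $J_0$ to itself.

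The only point needing care is the side issue: matching the side on which Takeuchi gives faithful flatness with the side of Noetherianity being used. Since $J$ has bijective antipode and "Noetherian" is side-independent here, I expect this to cause no real obstacle.
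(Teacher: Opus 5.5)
Your proposal is correct and is the argument the paper intends. The paper gives no separate proof: the corollary is stated right after Takeuchi's theorem and follows by taking $N=J_0$ (a Hopf subalgebra by the Chevalley property, with $J_0\subseteq G(J)J_0$ trivially) and then using the fact, recalled at the start of Section~\ref{sec:preliminaries}, that Noetherianity descends along faithfully flat extensions. Your middle paragraph mixes sides: left faithful flatness controls right ideals, while left ideals need $J$ faithfully flat as a right $J_0$-module. You correctly flag this and resolve it via the antipode, which maps $J_0$ onto itself, so nothing is missing.
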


Thus,  Question  \ref{question:Noetherian-finiteGK}
might be translated as follows:

\begin{question}\label{question:Noetherian-GKdim-gral}
If $M$ is  a Noetherian Hopf algebra with the Chevalley property, 
and $\GK M_0$ is finite, is necessarily $\GK M < \infty$?
\end{question}

Since  Question \ref{question:affine+finiteGK-Noetherian} 
also stands, one is motivated to consider Conjecture \ref{conj:nichols-noeth-gral} with the additional assumption that $J = K$
is cosemisimple.

\medbreak
Now, let $A$ be an algebra with 
$\GK A<\infty$.
A finite dimensional subspace $V\subseteq A$ is
\emph{GK-deterministic} if
\begin{align*}
\GK A=\lim_{n \to \infty} \log_n \dim \sum_{0\le j\le n}V^j.
\end{align*}

\begin{lemma}\label{lemma:GKdim-smashproduct} \cite{AAH2}*{Lemma 2.3.1} Let 
$T$ be a $J$-module algebra such that the action 
of $J$ on $T$ is locally finite (this assumption  cannot be dropped).
Then 
\begin{align}\label{item:GKdim-smashproduct}
\GK T\rtimes J \le \GK T+\GK J.
\end{align}
If  either $J$ or $T$ have a GK-deterministic subspace, then 
the equality holds.
\end{lemma}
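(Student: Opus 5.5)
The argument has two parts. For the inequality \eqref{item:GKdim-smashproduct} I would bound the growth of finitely generated subalgebras of $T\rtimes J$ directly. For the equality I would produce a lower bound using a GK-deterministic subspace.

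\textbf{Upper bound.} Since $\GK$ is a supremum over finitely generated subalgebras, it suffices to take a finite-dimensional subspace $W\subseteq T\rtimes J$ with $1\in W$ and bound the growth of $W^n$. Choose finite-dimensional subspaces $U\subseteq J$ and $X\subseteq T$ with $W\subseteq X\otimes U$. We may enlarge $U$ to a subcoalgebra containing $1$, because every finite-dimensional subspace of a coalgebra lies in a finite-dimensional subcoalgebra. Local finiteness of the action then lets us enlarge $X$ to a finite-dimensional subspace containing $1$ that is stable under $U$. The smash product rule is $(t\#h)(t'\#h')=t(h_{(1)}\cdot t')\#h_{(2)}h'$. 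For each $n$, let $U_n$ be the subcoalgebra spanned by the products of at most $n$ elements of $U$. Then $U_n\cdot X\subseteq X_n$, where $X_n$ is a finite-dimensional space obtained by stabilizing. A priori $X_n$ could grow with $n$, and controlling this is the crux. The fix is to replace $X$ at the outset by $X'=\bigl(\ku\langle U\rangle\bigr)\cdot X$. Local finiteness applied to the action of the subalgebra $\ku\langle U\rangle$ on the finite-dimensional space $X$ is exactly what makes $X'$ finite-dimensional; without it $X'$ could be infinite-dimensional, which is why the hypothesis cannot be dropped. With $X'$ in hand, induction on $n$ using the multiplication rule gives
\[
W^n\subseteq \Bigl(\sum_{j\le n}X'^j\Bigr)\otimes U_n .
\]
Hence $\dim W^n\le \dim\bigl(\sum_{j\le n}X'^j\bigr)\cdot\dim U_n$. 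Taking $\limsup\log_n$ of a product gives at most the sum of the two upper growth rates, and therefore $\GK T\rtimes J\le\GK T+\GK J$.

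\textbf{Equality.} Suppose first that $T$ has a GK-deterministic subspace $V$; we may assume $1\in V$. Given any finite-dimensional $U\ni 1$ in $J$, enlarge $U$ as above and take $W=V'\otimes 1+1\otimes U$, where $V'\supseteq V$ is stable under the action of $U$. The products $V^i\cdot U^k$ with $i\le n$ and $k\le n$ lie in $W^{2n}$ and span $\bigl(\sum_{i\le n}V^i\bigr)\otimes\bigl(\sum_{k\le n}U^k\bigr)$, because the multiplication map $T\otimes J\to T\rtimes J$ is bijective. Therefore
\[
\dim W^{2n}\ge \dim\Bigl(\sum_{i\le n}V^i\Bigr)\cdot\dim\Bigl(\sum_{k\le n}U^k\Bigr).
\]
Since $V$ is GK-deterministic, the first factor grows like $n^{\GK T}$ in the sense of an actual limit. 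A liminf–limsup argument then shows that the growth exponent of $W$ is at least $\GK T$ plus the exponent of $U$. Taking the supremum over $U$ yields $\GK T\rtimes J\ge\GK T+\GK J$.

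The case where $J$ has a GK-deterministic subspace is symmetric: write the same products in the order $U^k V^i$, noting that $J$ sits as a subalgebra and $T\rtimes J=J\cdot T$ as well. The main subtlety is that a plain lower bound requires one of the two factors to have a true limit rather than only a limsup, and the GK-deterministic hypothesis supplies precisely this.
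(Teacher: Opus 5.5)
Your proof is correct and follows essentially the same route as the cited \cite{AAH2}*{Lemma 2.3.1}; the paper itself only cites that lemma and gives no proof. For the inequality, you take $U\subseteq J$ to be a subcoalgebra containing $1$ and use local finiteness to get a finite-dimensional $X'\subseteq T$ stable under the action, so that $W^n\subseteq\bigl(\sum_{j\le n}(X')^j\bigr)\otimes U^n$. For the equality, $W=V\otimes 1+1\otimes U$ gives $W^{2n}\supseteq\bigl(\sum_{i\le n}V^i\bigr)\otimes\bigl(\sum_{k\le n}U^k\bigr)$, and the GK-deterministic factor supplies an honest limit.

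One small simplification: when $J$ is the one with a GK-deterministic subspace, you do not need to reorder to $U^kV^i$. The same products $V^iU^k$ give the same lower bound, and you simply take the limit in the second factor instead.
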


\begin{remark}\label{rem:Noetherian-DME-GKdim-chev}
Let $M$ be a Noetherian Hopf algebra with the Chevalley property
such that $K \simeq M_0$ satisfies $\GK K$ is finite. 
If $\gr\,M$ is Noetherian
and \ref{item:R-Noetherian-gral} implies \ref{item:GK-R-finita-gral}, 
then $\GK M < \infty$.
\end{remark}

\pf
Let  $\gr M\simeq R \# K$ be as above; it  is Noetherian
by assumption.
We infer from Lemma \ref{lemma:pointed corad-graded}
that $R$ is left Noetherian. 
Hence $\dim V < \infty$ by Lemma~\ref{lemma:graded-Noetherian}.
Since we assume that
\ref{item:R-Noetherian-gral} implies \ref{item:GK-R-finita-gral}, $\GK R < \infty$.
By Lemma \ref{lemma:GKdim-smashproduct}, 
$\GK M =  \GK R + \GK K < \infty$.
\epf

We briefly discuss the mirror situation to the previous Remark.
Namely, let $M$ be a Hopf algebra with the Chevalley property; then $\gr M \simeq R \# K$ as above. 
Assume that 
$\GK M < \infty$ and that the diagram $R$ is affine. 
By Proposition \ref{prop:kl-graded-filtered} $\GK \gr M < \infty$; a fortiori, 
$\GK R < \infty$ and $\GK K  < \infty$. 
If \ref{item:GK-R-finita-gral} implies \ref{item:R-Noetherian-gral}, 
then $R$ is Noetherian. So we are lead to the following two questions; the first is a particular case of
Question \ref{question:affine+finiteGK-Noetherian}.

\begin{question}\label{question:coss-finiteGK-implies Noeth}
Let $K$ be an affine \emph{cosemisimple} Hopf algebra such that $\GK K < \infty$; does it follow
that $K$ is Noetherian? 
\end{question}

\begin{question}\label{question:bosonization-Noeth-implies Noeth}
Let $K$ be a cosemisimple Hopf algebra, let $V\in \yd{K}$ and let $R$ be an affine post-Nichols algebra of $V$.
Assume that $K$ and $R$ are Noetherian. Does it follow that $R \# K$ is Noetherian?
\end{question}

\subsection{The standard filtration}

It was proposed to study more general Hopf algebras
by looking at the Hopf coradical, i.e., the
subalgebra generated by the coradical, and
the  standard filtration,  whose terms are 
iterative wedge operations of the Hopf coradical;
see \cite{andrus-cuadra}.
Although the steps in this approach are notoriously difficult, 
we record a first needed result.

\begin{cor}
The subalgebra generated by the coradical of a Noetherian 
Hopf algebra  is Noetherian.
\end{cor}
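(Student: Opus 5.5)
The plan is to apply Takeuchi's Theorem with $N$ the Hopf coradical. Let $J$ be a Noetherian Hopf algebra with bijective antipode, and let $N \coloneqq \ku\langle J_0\rangle$ be the subalgebra generated by the coradical $J_0$. The first step is to check that $N$ is a Hopf subalgebra of $J$. It is a subalgebra by construction. It is a subcoalgebra because $J_0$ is a subcoalgebra: $\Delta(J_0)\subseteq J_0\otimes J_0$, $\Delta$ is multiplicative, and so products of elements of $J_0$ are mapped into $N\otimes N$. For stability under the antipode, recall that $S$ is an anti-coalgebra morphism, so $S(J_0)$ is again a cosemisimple subcoalgebra, hence $S(J_0)\subseteq J_0$. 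Since $S$ is an anti-algebra morphism, it follows that $S(N)\subseteq N$. The same argument applied to $S^{-1}$ gives $S^{-1}(N)\subseteq N$, so $N$ is a Hopf subalgebra, and its antipode is bijective.

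The second step is to verify the hypothesis of Takeuchi's Theorem, namely $J_0\subseteq G(J)N$. This is immediate, because $J_0\subseteq N$ and $1\in G(J)$. Takeuchi's Theorem then says that $J$ is a faithfully flat $N$-module. As stated, it gives flatness on one side; if the other side is needed, apply the theorem to $J^{\mathrm{op}}$ or use the bijectivity of $S$, which interchanges the sides.

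The final step is to invoke the principle stated at the beginning of Section~\ref{sec:preliminaries}: a subalgebra $B$ of a left Noetherian algebra $A$ over which $A$ is faithfully flat is left Noetherian. Concretely, for a left ideal $I$ of $N$ one has $JI\cap N = I$ by faithful flatness, so an ascending chain of left ideals of $N$ embeds into an ascending chain of left ideals of $J$ and therefore stabilizes. Hence $N$ is left Noetherian. Since $N$ is a Hopf algebra with bijective antipode, left and right Noetherianity coincide, which proves the corollary; this is exactly parallel to Corollary~\ref{cor:coradical-noetherian}.

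The only point requiring care, and so the main obstacle, is matching the side of faithful flatness to the side of Noetherianity. Here I would rely on the symmetric version of Takeuchi's result, available because the antipode is bijective, rather than grinding through the one-sided statement.
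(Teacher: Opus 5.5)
Your proposal is correct and is exactly the argument the paper intends: the corollary is stated without proof right after Takeuchi's theorem. The intended route is to apply that theorem with $N=\ku\langle J_0\rangle$ (so $J_0\subseteq N\subseteq G(J)N$ holds trivially), and then to use the faithfully flat descent of Noetherianity recalled at the start of Section~\ref{sec:preliminaries}. One small precision: $S(J_0)\subseteq J_0$ holds because your bijective $S$ restricts to an isomorphism $J_0^{\mathrm{cop}}\simeq S(J_0)$, and being an anti-coalgebra map alone would not suffice, since a non-injective coalgebra map need not send cosemisimple subcoalgebras to cosemisimple ones.
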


\subsection*{Acknowledgements} 
Part of the work of N. A. was done during a long term  visit  to the 
Shenzhen International Center for Mathematics at the Southern University of Science and Technology; he thanks Efim Zelmanov and Slava Futorny for the warm hospitality.

\medbreak
Part of the work  of N. A. was carried out while he held the position of Visiting Professor at the Vrije Universiteit Brussel (2025-2026). He thanks Leandro Vendramin 
for his friendly hospitality.

\medbreak
The final writing of this paper was done during a visit of N. A. to the Sydney Mathematical Research
Institute (SMRI) in April and May 2026. He thanks Gus Lehrer, Geordie Williamson and Ruibin Zhang
for the invitation and  warm welcome.

%\vspace{1ex}
\bibliographystyle{abbrv}
\bibliography{noeth-pointed.bib}

\end{document}